   \renewcommand{\footnote}[1]{
\textsuperscript{
\addtocounter{footnote}{1}
(\thefootnote)
}
\footnotetext{#1}
}
\theoremstyle{remark}
\newtheorem{defi}{Définition}
\newtheorem{remark}[defi]{Remark}
\newtheorem{notation}[defi]{Notation}
\theoremstyle{plain}
\newtheorem{thm}{Theorem}
\newtheorem{lem}[defi]{Lemma}
\newtheorem{prop}[defi]{Proposition}
\def\v{\varepsilon}
\def\O{\Omega}
\def\p{\partial}
\def\o{\omega}
\def\R{\mathbb{R}}
\def\C{\mathscr{C}}
\def\S{\mathbb{S}}
\def\Div{{\rm  div}}
\def\n{\nabla}
\def\dist{{\rm dist}}
\def\tr{{\rm tr}}
\def\N{\mathbb{N}}
\def\I{\mathcal{I}}
\def\M{\mathcal{M}}
\def\1{\textrm{1\kern-0.25emI}}
\def\0{\textrm{0\kern-0.25emI}}
\def\H{\mathscr{H}}
\def\D{{\mathbb{D}}}
\def\K{\mathcal{K}}
\def\repereO{\mathcal{R}_0}
\def\repere{\mathcal{R}}
\def\Q{\mathcal{Q}}
\def\Pab{\mathcal{P}}
\def\Sab{\mathcal{S}}
\def\tos{\stackrel{\rm strictly}{\longrightarrow}}
\def\Proj{\Pi}
\def\({\left(}
\def\){\left(}
\def\supp{{\rm supp}}
\def\mint{\displaystyle\mathop{\,\,\rlap{---}\!\int}\nolimits}
\newlength\longarc
\newcommand\curve{\wideparen}
\def\vecc{\overrightarrow}
\title[Non existence of least gradient function for some characteristic functions in a $C^2$ domain]{Characteristic functions on the boundary of a planar domain need not be traces of least gradient functions}%
\author{Micka\"el { Dos\,Santos}}
\subjclass[2010]{26B30,  	35J56  }
\address{M. Dos Santos, 
              Universit\'e Paris Est-Cr\'eteil, 61 avenue du G\'en\'eral de Gaulle, 94010 Cr\'eteil Cedex \\
}
 \email{mickael.dos-santos@u-pec.fr} 
\begin{document}
\maketitle
\begin{abstract}
Given a smooth bounded planar domain $\O$, we construct a compact set on the boundary such that its characteristic function is not the trace of a least gradient function. This  generalizes the construction  of Spradlin and Tamasan \cite{ST1} when $\O$ is a disc.\end{abstract}
\tableofcontents
\section{Introduction}

We let $\O$ be a bounded $C^2$ domain of $\R^2$. For a function $h\in L^1(\p\O,\R)$, the least gradient problem with boundary datum $h$ consists in deciding whether
\begin{equation}\label{LeastGradH}
\inf\left\{\int_\O|D w|\;;\;w\in BV(\O)\text{ and }\tr_{\p\O}w=h\right\}
\end{equation}
is achieved or not.

In  the  above  minimization problem, $BV(\O)$ is the space of functions of bounded variation. It is the space of functions $w\in L^1(\O)$ having a distributional gradient $Dw$ which is a bounded Radon measure. 

If the {\it infimum} in \eqref{LeastGradH} is achieved, minimal functions are called {\it functions of least gradient}.

Sternberg, Williams and Ziemmer proved in \cite{SWZ1} that if $h:\p\O\to\R$ is a continuous map and if $\p\O$ satisfies a geometric properties then there exists a (unique) function of least gradient. For further use, we note that the geometric property is satisfied by Euclidean balls.

On the other hand, Spradlin and Tamasan \cite{ST1} proved that, for the disc $\O=\{x\in\R^2\,;\,|x|<1\}$, we may find a function $h_0\in L^1(\p\O)$ which is not continuous s.t. the {\it infimum} in \eqref{LeastGradH} is not achieved. The function $h_0$ is the characteristic function of a Cantor type set $\K\subset\S^1=\{x\in\R^2\,;\,|x|=1\}$

The goal of this article is to extend the main result of \cite{ST1} to a general $C^2$ bounded open set $\O\subset\R^2$.

We prove the following theorem.

\begin{thm}\label{MainTHM}
Let $\O\subset\R^2$ be a bounded $C^2$ open set. Then there exists a measurable set $\K\subset\p\O$ such that the {\it infimum}
\begin{equation}\label{LeProb}
\inf\left\{\int_\O|D w|\;;\;w\in BV(\O)\text{ and }\tr_{\p\O}w=\1_\K\right\}
\end{equation}
is not achieved.
\end{thm}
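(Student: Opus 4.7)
The plan is to reduce the problem to the local picture near a boundary point of positive curvature, where a convex local geometry mimics that of the disc, and then to transplant the Cantor--type obstruction of \cite{ST1} into that local chart. Concretely, I would first pick a boundary point $p\in\p\O$ at which the curvature of $\p\O$ (oriented so that the normal points into $\O$) is strictly positive; such $p$ exists on the outer Jordan component of $\p\O$ by the Umlaufsatz. Using the $C^2$--regularity, I may then choose a relatively open neighborhood $U\subset\overline{\O}$ of $p$ such that $\O\cap U$ is convex in the strong sense that every segment joining two points of $\overline{\O\cap U}$ lies in $\overline{\O\cap U}$. In this chart, the arc--length parameterization of $\p\O\cap U$ gives a bi--Lipschitz identification with an interval of $\R$, along which I would transplant a Cantor set $\K$ of the Spradlin--Tamasan type: a compact set whose complement in a short boundary arc consists of countably many gaps with carefully chosen length $\ell_n$ at each dyadic scale.

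Next, assuming for contradiction that some $u\in BV(\O)$ achieves the infimum in \eqref{LeProb}, I would appeal to the classical structure of two--dimensional least gradient functions. Via the coarea formula, for almost every $t\in(0,1)$ the super--level set $E_t=\{u>t\}$ is a Caccioppoli set whose boundary in $\O$ is perimeter--minimizing among all sets $F\subset\O$ with $\tr_{\p\O}\1_F=\1_\K$. In dimension two, the reduced boundary $\p^* E_t\cap\O$ is a countable union of length--minimizing curves joining pairs of points of $\p\K$. Because $\K\subset U$ and $\O\cap U$ is convex, these minimizing curves reduce in $U$ to straight line segments (chords), and the monotonicity $E_s\subset E_t$ for $s>t$ moreover forces these chord families to be nested as $t$ varies.

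Armed with this chord structure, the Cantor--set argument of \cite{ST1} can then be carried out essentially verbatim. Every gap of $\K$ at scale $n$ must be crossed by a chord whose length is bounded below, by the convex--geometric analog of the isometry of $\S^1$, by a uniform multiple of $\ell_n$. Integrating in $t$ via the coarea formula and summing over the $2^{n-1}$ gaps at each scale yields a lower bound of the form
\[
\int_\O|Du|\ \geq\ c\sum_{n\geq 1}2^{n-1}\ell_n
\]
for any putative minimizer. Choosing the $\ell_n$ as in \cite{ST1} makes the series diverge, contradicting $u\in BV(\O)$. On the other hand, standard competitors (for instance truncating the Cantor construction at level $N$ and interpolating) show that the infimum in \eqref{LeProb} is finite, so the obstruction is genuinely one of non--attainment.

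The main obstacle I foresee is the geometric rigidity step: showing that the length--minimizing curves in $\p^*E_t\cap\O$ whose endpoints lie in $\p\K\subset U$ cannot escape $U$ in favor of a long geodesic that hugs $\p\O$ away from $p$. In the disc every chord is automatically short and stays inside the domain, but for a general $\O$ one has to rule out non--local minimizers by a comparison argument against a fixed short chord used as a barrier, ensuring that the diameter of $\K$ is small enough for the local convexity of $\O\cap U$ to dominate the global geometry. A secondary, purely technical point is to verify that the Euclidean chord--length estimates of \cite{ST1} survive the passage from the circle to a general $C^2$ arc; this reduces to the bi--Lipschitz nature of the arc--length parameterization and is harmless up to multiplicative constants.
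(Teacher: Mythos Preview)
Your approach contains a decisive error at its core. You claim that a suitable choice of gap lengths $\ell_n$ (``as in \cite{ST1}'') makes the series $\sum_{n\geq 1}2^{n-1}\ell_n$ diverge, and that this contradicts $u\in BV(\O)$. But for \emph{any} Cantor set built by removing $2^{n-1}$ disjoint open gaps of length $\ell_n$ at stage $n$ from an arc of length $L$, the gaps are pairwise disjoint subsets of that arc, so
\[
\sum_{n\geq 1}2^{n-1}\ell_n \;=\;\H^1\bigl(\text{total removed}\bigr)\;\leq\; L \;<\;\infty.
\]
No choice of $\ell_n$ can make this diverge. In fact the set in \cite{ST1} is a \emph{fat} Cantor set, so $\sum 2^{n-1}\ell_n$ is strictly less than $L$ and $\H^1(\K)=L-\sum 2^{n-1}\ell_n>0$. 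Your chord--length lower bound, even granting its derivation, is therefore a finite number and yields no contradiction whatsoever. Since this divergence is the entire engine of your argument, the proof collapses.

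This also misreads what \cite{ST1} actually does. Both there and in the present paper the mechanism is: (i) an extension construction \`a la Giusti gives the upper bound $\inf\leq\H^1(\K)$; (ii) integrating the variation of any competitor along a one--parameter family of curves transversal to $\p\O$ and hitting it in $\K$ gives the matching lower bound $\inf\geq\H^1(\K)$; (iii) if a minimizer $u_0$ existed, equality in (ii) would force $Du_0$ to be carried entirely by the ``normal'' direction along those curves, with no transverse component, and a Poincar\'e--type argument then forces $u_0\equiv 0$, contradicting $\tr_{\p\O}u_0=\1_\K$ with $\H^1(\K)>0$. The heart of the paper (Sections~\ref{SectConstructCantor}--\ref{SectionConstructFunction}) is the construction of the curved analogue of these transversal curves, encoded in a set $B_\infty$ and a unit vector field $V_\infty$, together with nearly $1$--Lipschitz projections $\Psi_N$ that make the coarea computation in (ii) go through. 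The Cantor structure is used not to make a series diverge but to make $B_\infty$ have \emph{empty interior} (Lemma~\ref{LemADem}); this is precisely what drives the rigidity step (iii) in the curved setting, where a plain arc $\M$ would admit a minimizer, namely the characteristic function of the chord region, as the paper's own Remark in Section~\ref{SectionStrategy} explains. Your level--set/chord picture is a reasonable starting observation, but the contradiction you propose does not exist, and the genuine obstruction---the tight matching of upper and lower bounds at $\H^1(\K)$ together with the ensuing rigidity---is absent from your sketch.
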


The calculations in \cite{ST1} are specific to the case $\O=\D$.  The proof of Theorem \ref{MainTHM}  relies on new arguments for the construction of the Cantor set $\K$ and the strategy of the proof.
 

 \section{Strategy of the proof}\label{SectionStrategy}
 \subsection{The model problem}
 We illustrate the strategy developed to prove Theorem \ref{MainTHM} on the model case  $\Q=(0,1)^2$. Clearly, this model case does not satisfy the $C^2$ assumption. 	
 
 Nevertheless, the flatness of $\p\Q$ allows to get a more general counterpart of Theorem \ref{MainTHM}. Namely, the counterpart of Theorem \ref{MainTHM} [see Proposition \ref{THMLeProb-con-MODEL} below] is no more an existence result of a set $\K\subset\p\Q$ s.t. Problem \eqref{LeProb} is not achieved. It is a non existence result of a least gradient function for $h=\1_\M$ for {\it any} measurable domain $\M\subset[0,1]\times\{0\}\subset\p\Q$ with positive Lebesgue measure.
 
 We thus prove the following result whose strategy of the proof is due to Petru Mironescu.
 \begin{prop}\label{THMLeProb-con-MODEL}[P. Mironescu]
 Let $\tilde{\M}\subset[0,1]$ be a measurable set with positive Lebesgue measure. Then the {\it infimum} in 
 \begin{equation}\label{LeProbCarre}
\inf\left\{\int_\Q|D w|\;;\;w\in BV(\Q)\text{ and }\tr_{\p\Q}w=\1_{\tilde{\M}\times\{0\}}\right\}
\end{equation}
 is not achieved.
 \end{prop}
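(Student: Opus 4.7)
The plan is to prove Proposition \ref{THMLeProb-con-MODEL} by a vertical compression argument, as suggested by the attribution to Mironescu. The idea is that any minimizer would have to be invariant under such compression, forcing it to depend only on $y$, which is incompatible with the boundary datum.

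First, I assume for contradiction that some $u \in BV(\Q)$ with $\tr_{\p\Q} u = \1_{\tilde{\M} \times \{0\}}$ attains the infimum in \eqref{LeProbCarre}. For each $\lambda \in (0,1)$, I introduce the vertically compressed function
\[
u_\lambda(x,y) = \begin{cases} u(x, y/\lambda) & y \in (0, \lambda),\\ 0 & y \in (\lambda, 1),\end{cases}
\]
and check that it is an admissible competitor. The crucial observation is that the trace of $u$ on the top edge $[0,1] \times \{1\}$ vanishes a.e.\ (since $\1_{\tilde{\M} \times \{0\}}$ is supported on the bottom edge), so $u_\lambda$ has no jump contribution along $\{y = \lambda\}$ and belongs to $BV(\Q)$. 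The bottom trace of $u_\lambda$ coincides with that of $u$, and the lateral traces of both functions vanish, so $\tr_{\p\Q} u_\lambda = \1_{\tilde{\M} \times \{0\}}$.

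Next, a change-of-variables computation on $\Q_\lambda := (0,1) \times (0, \lambda)$ yields, for smooth $u$ (and by approximation for BV $u$),
\[
|Du_\lambda|(\Q) = |Du_\lambda|(\Q_\lambda) = \int_\Q \sqrt{\lambda^2 (\p_x u)^2 + (\p_y u)^2}\, dx\, dy \leq |Du|(\Q),
\]
since $\lambda \in (0,1)$, with strict inequality unless $\p_x u = 0$ a.e.\ in $\Q$. Minimality of $u$ forces equality; therefore $u$ is independent of $x$, i.e., $u(x,y) = v(y)$ for some $v \in BV((0,1))$.

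Finally, the trace of $u = v(y)$ on the lateral edge $\{0\} \times (0, 1)$ equals $v$ itself and must vanish by hypothesis on $\tr_{\p\Q} u$. Hence $v \equiv 0$ a.e.\ on $(0,1)$, so the BV trace of $v$ at $0$ is $0$ as well. This forces the bottom trace of $u$ to be identically zero, contradicting $\tr_{\p\Q} u = \1_{\tilde{\M} \times \{0\}}$ with $|\tilde{\M}| > 0$. The main technical point I anticipate is the rigorous justification of the BV change-of-variables identity together with the absence of a jump contribution on $\{y = \lambda\}$; both hinge on the fact that the top trace of $u$ vanishes, which is a structural feature of the boundary datum $\1_{\tilde{\M} \times \{0\}}$ and is precisely where the flatness of $\p\Q$ enters.
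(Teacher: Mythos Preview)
Your argument is correct and reaches the same endpoint as the paper --- namely, that a minimizer $u_0$ must satisfy $D_1 u_0 = 0$, hence $u_0(x,y)=v(y)$, and the vanishing lateral trace forces $v\equiv 0$ --- but you get there by a genuinely different mechanism. The paper matches an explicit upper bound $\int_\Q|Du_0|\le\H^1(\tilde\M)$ (via Giusti's extension, Proposition~\ref{ExtGiustiUpBound}) against the lower bound $\int_\Q|D_2 u_0|\ge\H^1(\tilde\M)$ (Lemma~\ref{TecLem1}, Fubini plus the fundamental theorem of calculus), and then invokes Lemma~\ref{TecLem2} to conclude $\int_\Q|D_1 u_0|=0$. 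You instead compare $u_0$ directly with its vertical compression $u_\lambda$ and use the anisotropic identity $|Du_\lambda|(\Q)=\int_\Q\sqrt{\lambda^2\sigma_1^2+\sigma_2^2}\,d|Du_0|$ (polar decomposition $Du_0=\sigma\,|Du_0|$) to force $\sigma_1=0$ $|Du_0|$-a.e. Your route is more self-contained: it never computes the value of the infimum and avoids the extension lemma altogether. The paper's route, on the other hand, yields the extra information that the infimum equals $\H^1(\tilde\M)$, and --- more importantly for the rest of the article --- it is the template that generalizes to curved $\partial\Omega$, where no global compression is available and one must instead match the upper bound against a coarea-based lower bound along curved level lines (Lemma~\ref{Fondlemlower}). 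One small remark: your phrase ``by approximation for BV $u$'' is the only place that deserves a line of justification; the clean way is to compute $|Du_\lambda|(\Q_\lambda)$ directly by duality and the change of variables $(x,y)\mapsto(x,\lambda y)$, which yields the displayed integral without smoothing.
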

  This section is devoted to the proof of Proposition \ref{THMLeProb-con-MODEL}. We fix a measurable set $\tilde{\M}\subset[0,1]$ with positive measure and  we let  $h=\1_{\tilde{\M}\times\{0\}}$. We argue  by contradiction: we assume that there exists a minimizer $u_0$ of \eqref{LeProbCarre}. We obtain a contradiction  in 3 steps.\\
  
{\bf Step 1.} Upper bound and lower bound\\

This first step consists in obtaining two estimates. The first estimate is the upper bound 
\begin{equation}\label{MainEstimateModel}
\int_{\Q}|D u_0|\leq\|\1_{\tilde{\M}\times\{0\}}\|_{L^1(\p{\Q})}=\H^1(\tilde\M).
\end{equation}
Here, $\H^1(\tilde\M)$ is the length of $\tilde\M$.

Estimate \eqref{MainEstimateModel} follows from Theorem 2.16 and Remark 2.17 in \cite{giusti1984minimal}. Indeed, by combining Theorem 2.16 and Remark 2.17 in \cite{giusti1984minimal}  we may prove that for $h\in L^1(\p\O)$ and for all $\v>0$ there exists a map $u_\v\in BV(\O)$ s.t.
\[
\int_\O|D u_\v|\leq(1+\v)\|h\|_{L^1(\p\O)}\text{ and }\tr_{\p\O}u_\v=h.
\]   The proof of this inequality when $\O$ is a half space  is presented in \cite{giusti1984minimal}. It is easy to adapt  the argument when $\O=\Q=(0,1)^2$. 
The extension for  a $C^2$ set $\O$ is presented in Appendix \ref{AppendixGiusti}.\\

{\bf Step 2.} Optimality of \eqref{MainEstimateModel} [see \eqref{Satuaration-ModelPb}]\\

The optimality of \eqref{MainEstimateModel} is obtained {\it via} the following lemma.
\begin{lem}\label{TecLem1}
For $u\in BV(\Q)$ we have
\[
\int_{\Q}|D_2 u|\geq\int_0^1|\tr_{\p\Q}  u(\cdot,0)-\tr_{\p\Q} u(\cdot,1)|.
\]
Here, for $k\in\{1,2\}$ we denoted 
\[
\int_\Q|D_k u|=\sup\left\{\int_{\Q} u\p_k\xi\,;\,\xi\in C_c^1({\Q})\text{ and }|\xi|\leq1\right\}
\]
where $C_c^1({\Q})$ are the set of real valued $C^1$-functions with compact support included in $\Q$.
\end{lem}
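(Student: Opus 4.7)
\emph{Approach.} The plan is to bound the variation by duality, via the Gauss--Green (integration-by-parts) formula for $BV$ functions on the Lipschitz domain $\Q$, applied to test fields that depend only on the horizontal coordinate $x_1$. Such a field has trivial $\p_2$-derivative inside $\Q$, so the Gauss--Green identity collapses to the boundary pairing, and that pairing picks up exactly the difference between the traces on the bottom and top edges.

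\emph{Key steps.} Fix $\phi \in C_c^1((0,1))$ with $|\phi|\leq 1$ and set $\xi(x_1,x_2):=\phi(x_1)$, which — after a harmless cut-off in $x_2$ far from $\overline\Q$ — lies in $C_c^1(\R^2)$ without altering its values or its $\p_2$-derivative on $\overline\Q$. The Gauss--Green formula for $u \in BV(\Q)$ (see \cite{giusti1984minimal}) gives
\[
\int_\Q \xi \, dD_2 u + \int_\Q u \, \p_2 \xi \, dx = \int_{\p\Q} (\tr_{\p\Q} u)\, \xi\, \nu_2 \, d\H^1,
\]
where $\nu=(\nu_1,\nu_2)$ is the outer unit normal. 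Since $\p_2 \xi \equiv 0$ inside $\Q$, and since $\nu_2 = -1$ on $(0,1)\times\{0\}$, $\nu_2 = +1$ on $(0,1)\times\{1\}$, and $\nu_2 = 0$ on the two vertical sides, this collapses to
\[
\int_\Q \phi(x_1)\, dD_2 u \;=\; \int_0^1 \phi(x_1)\bigl[\tr_{\p\Q} u(x_1,1) - \tr_{\p\Q} u(x_1,0)\bigr]\, dx_1.
\]
The left-hand side is bounded in absolute value by $\sup|\phi|\cdot|D_2 u|(\Q) \leq \int_\Q |D_2 u|$. Setting $g(x_1):=\tr_{\p\Q} u(x_1,0)-\tr_{\p\Q} u(x_1,1)\in L^1((0,1))$ and taking the supremum over $\phi \in C_c^1((0,1))$ with $|\phi|\leq 1$ (which by $L^\infty$--$L^1$ duality equals $\|g\|_{L^1}$) yields
\[
\int_0^1 |\tr_{\p\Q} u(x_1,0) - \tr_{\p\Q} u(x_1,1)|\, dx_1 \;\leq\; \int_\Q |D_2 u|,
\]
as required.

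\emph{Main obstacle.} The only non-trivial ingredient is the Gauss--Green formula above, which rests on the existence of the $L^1$-trace of a $BV$ function on a Lipschitz boundary; both facts are classical, so once they are invoked the argument condenses to a single duality step. The one subtlety worth flagging is that the test field $\xi(x_1,x_2)=\phi(x_1)$ does not vanish on $\{x_2=0,1\}$ — which is precisely what produces the boundary term we need — so the bare definition of $\int_\Q|D_2 u|$ via $C_c^1(\Q)$ test functions cannot be used directly; one must pass through the identification of $D_2 u$ as a Radon measure on $\Q$ in order to pair it with a test function that is non-zero on $\p\Q$.
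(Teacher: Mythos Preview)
Your proof is correct but takes a different route from the paper's. The paper argues by approximation: it invokes a smoothing lemma (the paper's Lemma~\ref{LemApproxTechskdjfgh}) to produce a sequence $(u_n)\subset C^1(\Q)$ with $\tr_{\p\Q}u_n=\tr_{\p\Q}u$ and $\int_\Q|\p_2 u_n|\to\int_\Q|D_2 u|$, applies Fubini and the Fundamental Theorem of Calculus to each $u_n$ to obtain $\int_\Q|\p_2 u_n|\geq\int_0^1|\tr_{\p\Q} u(\cdot,1)-\tr_{\p\Q} u(\cdot,0)|$, and then passes to the limit. Your duality argument via the Gauss--Green identity for $BV$ is more direct and avoids any explicit approximation; of course the trace theorem and the Gauss--Green formula are themselves typically established by smoothing, so the saving is partly one of packaging. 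The paper's route keeps everything at the level of elementary calculus and re-uses the same approximation lemma elsewhere (Lemma~\ref{TecLem3} and the main proof), whereas your route isolates cleanly the structural fact that pairing $D_2 u$ against test functions depending only on $x_1$ recovers exactly the difference of the top and bottom traces.
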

Lemma \ref{TecLem1} is proved in Appendix \ref{AppProTecLem1}.

From Lemma \ref{TecLem1} we get 
\[
\int_{\Q}|D_2 u_0|\geq\int_0^1|\tr_{\p\Q} u_0(\cdot,0)-\tr_{\p\Q} u_0(\cdot,1)|=\int_0^1\1_{\tilde{\M}\times\{0\}}=\H^1(\tilde{\M}).
\]
Since we have
\begin{eqnarray}\label{Satuaration-ModelPb}
\int_{\Q}|D u_0|&:=&\sup\left\{\int_{\Q} u\Div(\xi)\,;\,\xi=(\xi_1,\xi_2)\in C_c^1({\Q},\R^2)\text{ and }\xi_1^2+\xi_2^2\leq1\right\}
\geq\int_{\Q}|D_2 u_0|
\geq\H^1(\tilde\M),
\end{eqnarray}
we get the optimality of \eqref{MainEstimateModel}. 

{\bf Step 3.} A transverse argument

From \eqref{MainEstimateModel} and \eqref{Satuaration-ModelPb} we may prove
\begin{equation}\label{NulFirstVar}
\int_{\Q}|D_1 u_0|=0.
\end{equation}
Equality \eqref{NulFirstVar} is a direct consequence of the following lemma.
\begin{lem}\label{TecLem2}
Let $\O$ be a planar open set. If $u\in BV(\O)$ is s.t.
\[
\int_{\O}|D u|=\int_{\O}|D_2 u|,
\]
then $\displaystyle\int_{\O}|D_1 u|=0$.
\end{lem}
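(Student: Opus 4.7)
The plan is to use the polar (Radon--Nikodym) decomposition of the vector-valued Radon measure $Du=(D_1u,D_2u)$ on $\O$. Writing $Du=\sigma\cdot|Du|$ with a Borel vector field $\sigma=(\sigma_1,\sigma_2):\O\to\R^2$ satisfying $|\sigma|=1$ for $|Du|$-almost every point, each component is absolutely continuous with respect to $|Du|$, so $D_ku=\sigma_k|Du|$ and
\[
|D_ku|=|\sigma_k|\,|Du|\qquad\text{for }k\in\{1,2\}.
\]
This identification of $|D_ku|$ as the scalar total variation of the component measure is the step I would first double-check, either by a direct computation from the definition (testing $D_ku$ against scalar $\xi\in C_c^1(\O)$ with $|\xi|\le1$ and using the polar form) or by invoking the general decomposition result for vector measures in Ambrosio--Fusco--Pallara.

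Once that is in place, the hypothesis $\int_\O|Du|=\int_\O|D_2u|$ rewrites as
\[
\int_\O(1-|\sigma_2|)\,d|Du|=0.
\]
Since $|\sigma_2|\le|\sigma|=1$ $|Du|$-a.e., the non-negative integrand must vanish $|Du|$-a.e., hence $|\sigma_2|=1$ $|Du|$-a.e. Combined with the normalization $\sigma_1^2+\sigma_2^2=1$ $|Du|$-a.e., this forces $\sigma_1=0$ $|Du|$-a.e.

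The conclusion then follows immediately:
\[
\int_\O|D_1u|=\int_\O|\sigma_1|\,d|Du|=0.
\]
The main (and essentially only) obstacle is justifying the pointwise identity $|D_ku|=|\sigma_k|\,|Du|$ for the scalar total variation of a component of a vector-valued Radon measure; everything else is a one-line algebraic consequence of $|\sigma|=1$.
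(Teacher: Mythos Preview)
Your proof is correct, and it takes a genuinely different route from the paper's. The paper argues by contradiction directly from the duality definition of total variation: assuming $\int_\O|D_1u|>0$, it picks a scalar test function $\xi$ with $|\xi|\le1$ and $\eta:=\int_\O u\,\partial_1\xi>0$, and a sequence $\xi_n$ with $\eta_n:=\int_\O u\,\partial_2\xi_n\to\int_\O|D_2u|=\int_\O|Du|$; testing $Du$ against the vector field $(\alpha\xi,\beta\xi_n)$ with $\alpha^2+\beta^2\le1$ and optimizing in $(\alpha,\beta)$ yields $\int_\O|Du|\ge\sqrt{\eta^2+\eta_n^2}$, which in the limit strictly exceeds $\int_\O|Du|$. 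Your polar-decomposition argument is cleaner and more structural: once $Du=\sigma\,|Du|$ with $|\sigma|=1$ $|Du|$-a.e., the hypothesis is exactly $|\sigma_2|=1$ $|Du|$-a.e., whence $\sigma_1=0$. The price is the external input you flag (existence of the polar decomposition and the identity $|f\nu|=|f|\,\nu$ for a real density against a positive measure, both standard, e.g.\ Ambrosio--Fusco--Pallara, Corollary~1.29); the paper's test-function approach is more elementary in that it stays entirely within the duality definitions already introduced in the paper and requires no structure theory for vector measures.
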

Lemma \ref{TecLem2} is proved in Appendix \ref{AppProTecLem2}.

In order to conclude we state an easy lemma.
\begin{lem}\label{TecLem3}[Poincaré inequality]
For $u\in BV(\Q)$ satisfying $\tr_{\p\Q} u=0$ in $\{0\}\times[0,1]$ we have
\[
\int_{\Q}| u|\leq \int_{\Q}|D_1 u|.
\]\end{lem}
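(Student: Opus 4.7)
The plan is to prove the inequality by slicing $\Q$ in the $x_1$-direction, applying the one-dimensional fundamental theorem of calculus for BV functions on each slice, and then reassembling via Fubini.

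More precisely, I would first invoke the standard slicing theorem for $BV$ functions (see e.g.\ Theorem 3.103 in Ambrosio--Fusco--Pallara, or the corresponding result in \cite{giusti1984minimal}): for a.e.\ $y\in(0,1)$, the partial function $u^y:=u(\cdot,y)$ belongs to $BV((0,1))$, the measure $|Du^y|$ coincides with the disintegration of $|D_1u|$ along the fibers $\{x_2=y\}$, and
\[
\int_{\Q}|D_1 u|=\int_0^1 |Du^y|((0,1))\,{\rm d}y.
\]
Moreover, the assumption $\tr_{\p\Q}u=0$ on $\{0\}\times[0,1]$ transfers (for a.e.\ $y$) to $u^y(0^+)=0$, where $u^y(0^+)$ denotes the right-trace at $0$ of the one-dimensional $BV$ function $u^y$.

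Next, for a.e.\ $y$ and every $x\in(0,1)$, the one-dimensional fundamental theorem of calculus for $BV$ functions yields
\[
|u^y(x)|=|u^y(x)-u^y(0^+)|\leq |Du^y|((0,x))\leq |Du^y|((0,1)).
\]
Integrating this bound over $x\in(0,1)$ (the right-hand side does not depend on $x$) gives, for a.e.\ $y$,
\[
\int_0^1|u(x,y)|\,{\rm d}x\leq |Du^y|((0,1)).
\]
Finally, integrating in $y\in(0,1)$ and using Fubini together with the slicing identity recalled above:
\[
\int_\Q |u|=\int_0^1\!\!\int_0^1|u(x,y)|\,{\rm d}x\,{\rm d}y\leq \int_0^1 |Du^y|((0,1))\,{\rm d}y=\int_\Q|D_1 u|.
\]

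The only delicate point is the transfer of the trace condition from $\p\Q$ to the one-dimensional slices: one must argue that vanishing of the global trace $\tr_{\p\Q}u$ on $\{0\}\times[0,1]$ forces $u^y(0^+)=0$ for a.e.\ $y$. This is standard (it follows from the compatibility between the $BV$-trace on $\p\Q$ and the one-dimensional traces of the slices, via an approximation by smooth functions strictly converging in $BV$ as used already in Step 2), but it is the one step that deserves an explicit justification. Everything else is routine.
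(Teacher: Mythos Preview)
Your proof is correct and rests on the same underlying idea (slicing in the $x_1$-direction, one-dimensional fundamental theorem of calculus, Fubini), but the execution differs from the paper's. The paper does not invoke the BV slicing theorem; instead it calls the smooth approximation lemma (Lemma~\ref{LemApproxTechskdjfgh}) to replace $u$ by a sequence $(u_n)_n\subset C^1(\Q)$ satisfying $\tr_{\p\Q}u_n=\tr_{\p\Q}u$, $u_n\to u$ in $L^1$, and $\int_\Q|\p_1 u_n|\to\int_\Q|D_1 u|$, applies the classical fundamental theorem of calculus to each $u_n$ to obtain $\int_\Q|u_n|\leq\int_\Q|\p_1 u_n|$, and then passes to the limit. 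This sidesteps entirely the ``delicate point'' you flag about transferring the boundary trace to the one-dimensional slices: for the smooth $u_n$ the trace and the pointwise boundary values coincide trivially. Your route is more intrinsic once the slicing machinery and the trace-compatibility are granted; the paper's route is more self-contained, since Lemma~\ref{LemApproxTechskdjfgh} (including the convergence $\int_\Q|\p_1 u_n|\to\int_\Q|D_1 u|$) is already established in the paper for other purposes and no external result is needed.
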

Lemma \ref{TecLem3} is proved in Appendix \ref{AppProTecLem3}.

Hence, from \eqref{NulFirstVar} and Lemma \ref{TecLem3} we have $u_0=0$ which is in contradiction with $\tr_{\p\Q} u_0=\1_{\tilde{\M}\times\{0\}}$ with $\H^1(\tilde{\M})>0$.
\subsection{Outline of the proof of Theorem \ref{MainTHM}}
The idea is to adapt the above construction and argument to the case of a general $C^2$ domain $\O$. If $\O$ has a flat or concave part $\Gamma$ of the boundary $\p\O$, then a rather straightforward variant of the above proof shows that $\1_\M$, where $\M$ is a non trivial part of $\Gamma$, is not the trace of a least gradient function.

\begin{remark}Things are more involved when $\O$ is convex. For simplicity we illustrate this fact when $\O=\mathbb{D}=\{x\in\R^2\,;\,|x|<1\}$. Let $\M\subset\S^1\cap\{(x,y)\in\R^2\,;\,x<0\}$ be an arc whose endpoints are symmetric with respect to the $x$-axis. We let $(x_0,-y_0)$ and $(x_0,y_0)$ be the endpoints of $\M$ [here $x_0\leq0$ and $y_0>0$].

We let $\C$ be the chord of $\M$. On the one hand, if $u\in C^1(\mathbb{D})\cap W^{1,1}(\mathbb{D})$ is s.t. $\tr_{\S^1} u=\1_\M$ then, using the Fundamental Theorem of calculus, we have for $-y_0<y<y_0$
\[
\int_{-\sqrt{1-y^2}}^{\sqrt{1-y^2}}|\p_x u(x,y)|\geq1.
\]
Thus we easily get 
\[
\int_{\mathbb{D}}|\n u|\geq \int_{\mathbb{D}}|\p_x u|\geq\int_{-y_0}^{y_0}{\rm d}y\int_{-\sqrt{1-y^2}}^{\sqrt{1-y^2}}|\p_x u(x,y)|\geq2y_0=\H^1(\C).
\]
Consequently, with the help of a density argument [{\it e.g.} Lemma \ref{LemApproxTechskdjfgh} in Appendix \ref{SLemApproxTechskdjfgh}] we obtain 
\[
\inf\left\{\int_{\mathbb{D}}|D u|\;;\;u\in BV(\mathbb{D})\text{ and }\tr_{\S^1}u=\1_{{\M}}\right\}\geq\H^1(\C).
\]
On the other hand we let $\o:=\{(x,y)\in\R^2\,;\,x<x_0\}$. It is clear that $u_0=\1_{\o}\in BV(\mathbb{D})$ and $\tr_{\S^1}u_0=\1_{{\M}}$. Moreover
\[
\int_{\mathbb{D}}|D u_0|=\H^1(\C).
\]
Consequently $u_0$ is a  function of least gradient. We may do the same argument for a domain $\O$ as soon as we have a chord entirely contained in $\O$. This example suggest that for a convex set $\O$, the construction of a  set $\K\subset\p\O$ s.t. \eqref{LeProb} is not achieved has to be "sophisticated".
\end{remark}
The strategy to prove Theorem \ref{MainTHM} consists of constructing a special set $\K\subset\p\O$ [of Cantor type] and to associate to $\K$ a set $B_\infty$ [the analog of $\tilde{\M}\times(0,1)$ in the model problem] which "projects" onto $\K$ and s.t., if $u_0$ is a minimizer of  \eqref{LeastGradH}, then
\begin{equation}\label{MainStraEst1}
\int_{B_\infty}|\vec{X}\cdot Du_0|\geq\H^1(\K).
\end{equation}
Here, $\vec{X}$ is a vector field satisfying $|\vec{X}|\leq1$. It is the curved analog of $\vec{X}={\bf e}_2$ used in the above proof.

By \eqref{MainStraEst1} [and Proposition \ref{ExtGiustiUpBound} in Appendix \ref{AppendixGiusti}], if $u_0$ is a minimizer, then 
\begin{equation}\label{MainStraEst2}
\int_{\O\setminus B_\infty}|D u_0|+\int_{B_\infty}(|D u_0|-|\vec{X}\cdot Du_0|)=0.
\end{equation}
We next establish a Poincaré type inequality implying that any $u_0$ satisfying \eqref{MainStraEst2} and $\tr_{\p\O\setminus\K} u=0$ is $0$, which is not possible.

The heart of the proof consists of constructing $\K, B_\infty$ and $\vec{X}$ [see Sections \ref{SectConstructCantor} and \ref{SectionConstructFunction}].
\section{Notation, definitions}\label{NotationOfTheArticle}
The ambient space is the Euclidean plan $\R^2$. We let $\mathcal{B}_{\rm can}$ be the canonical basis of $\R^2$.
\begin{enumerate}[a)]
\item The open ball centered at $A\in\R^2$ with radius $r>0$ is denoted $B(A,r)$.
\item A vector may be denoted by an arrow when it is defined by its endpoints ({\it e.g.} $\vecc{AB}$).  It may be also denoted by a letter  in bold font ({\it e.g.} ${\bf u}$) or more simply by a Greek letter in normal font ({\it e.g. } $\nu$). \\We let also $|{\bf u}|$ be the  Euclidean norm of the vector ${\bf u}$.
\item For a vector ${\bf u}$ we let ${\bf u}^\bot$ be the direct orthogonal vector to ${\bf u}$, {\it i.e.}, if  ${\bf u}=(x_1,x_2)$ then  ${\bf u}^\bot=(-x_2,x_1)$.
\item For $A,B\in\R^2$, the segment of endpoints $A$ and $B$ is denoted $[AB]=\{A+t\vecc{AB}\,;\,t\in[0,1]\}$ and $\dist(A,B)=|\vecc{AB}|$ is the Euclidean distance.
\item For a set $U\subset\R^2$, the topological interior of $U$ is denoted by $\stackrel{\circ}{U}$ and its topological closure is  $\overline{U}$.

\item For $k\geq1$, a  $C^k$-curve is the range of a $C^k$ injective map from $(0,1)$ to $\R^2$. Note that, in this article, $C^k$-curves are not closed sets of $\R^2$.
\item  For $\Gamma$ a $C^1$-curve, $\H^1(\Gamma)$ is the $1$-dimensional Hausdorff measure of $\Gamma$. 
\item  For $k\geq1$, a $C^k$-Jordan curve is the range of a $C^k$ injective map from the unit circle $\S^1$ to $\R^2$.
\item For  $\Gamma$  a $C^1$-curve or a $C^1$-Jordan curve, $\C=[AB]$ is a chord of $\Gamma$ when $A,B\in\overline{\Gamma}$ with $A\neq B$. 
\item\label{DefinitionEtaGamma} If $\Gamma$ is a $C^1$-Jordan curve then, for $A, B\in\Gamma\,\&\,A\neq B$, the set $\Gamma\setminus\{A,B\}$ admits exactly two connected components: $\Gamma_1\&\Gamma_2$. These connected components are $C^1$-curves.

By smoothness of $\Gamma$, it is clear that there exists $\eta_\Gamma>0$ s.t. for $0<\dist(A,B)<\eta_\Gamma$ there exists THE smallest connected components: we have $\H^1(\Gamma_1)<\H^1(\Gamma_2)$ or $\H^1(\Gamma_2)<\H^1(\Gamma_1)$.

If $0<\dist(A,B)<\eta_\Gamma$ we may define $\curve{AB}$ by:
\begin{equation}\label{DefCurve}
\text{$\curve{AB}$ is the closure of the smallest curve between $\Gamma_1$ and $\Gamma_2$.}
\end{equation}
\item In this article $\O\subset\R^2$ is a  $C^2$ bounded open set. 
\end{enumerate}
 
\section{Construction of the Cantor set $\K$}\label{SectConstructCantor}
It is clear that, in order to prove Theorem \ref{MainTHM}, we may assume that $\O$ is a connected set.

We fix $\O\subset\R^2$ a bounded $C^2$ open connected set. The set $\K\subset\p\O$  is a Cantor type set we will construct below.

\subsection{First step:  localization of $\p\O$}

From the regularity of $\O$, there exist $\ell+1$ $C^2$-open sets, $\o_0,...,\o_\ell$, s.t. $\O=\o_0\setminus\overline{\o_1\cup\cdots\cup\o_\ell}$ and 
\begin{itemize}
\item $\o_i$ is simply connected for $i=0,...,\ell$,
\item $\overline{\o_i}\subset\o_0$ for $i=1,...,\ell$,
\item $\overline{\o_i}\cap\overline{\o_j}=\emptyset$ for $1\leq i<j\leq \ell$.
\end{itemize}

We let $\Gamma=\p\o_0$. The Cantor type set $\K$ we construct "lives" on $\Gamma$. Note that $\Gamma$ is a Jordan-curve. 
 
 Let $M_0\in\Gamma$ be s.t. the inner curvature of $\Gamma$ at $M_0$ is positive [the existence of $M_0$ follows from the Gauss-Bonnet formula]. Then there exists  $r_0\in(0,1)$ s.t.  $[AB]\subset\overline{\O}$ and $[AB]\cap\p\O=\{A,B\}$, $\forall\,A,B\in B(M_0,r_0)\cap\Gamma$. Note that we may assume $2r_0<\eta_\Gamma$ [$\eta_\Gamma$ is defined in Section \ref{NotationOfTheArticle}-\ref{DefinitionEtaGamma}].\\

We fix $ A, B\in B(M_0,r_0)\cap\Gamma$ s.t.  $A\neq B$. We have:
\begin{itemize}
\item  By the definition of $M_0$ and $r_0$, the chord $\C_0:=[ A B]$ is included in $\overline{\O}$.
\item  We let $\curve{AB}$ be the closure of the smallest part of $\Gamma$ which is  delimited by $A,B$ (see \eqref{DefCurve}). We may assume that $\curve{AB}$ is the graph of $f\in C^2([0,\eta],\R^+)$ in the orthonormal frame $\repereO=( A,{\bf e}_1,{\bf e}_2)$ where ${\bf e}_1={\vecc{ A B}}/{|\vecc{ A B}|}$.
\item The function $f$ satisfies $f(x)>0$ for $x\in(0,\eta)$ and $f''(x)<0$  for $x\in[0,\eta]$.
\end{itemize}
For further use we note that the length of the chord $[ A B]$ is $\eta$ and that for intervals $I,J\subset[0,\eta]$, if $I\subset J$ then 
\begin{equation}\label{KeyArgRestriction}
\begin{cases}
\|f_{|I}'\|_{L^\infty(I)}\leq\|f_{|J}'\|_{L^\infty(J)}
\\
\|f_{|I}''\|_{L^\infty(I)}\leq\|f_{|J}''\|_{L^\infty(J)}
\end{cases}
\end{equation}
where $f_{|I}$ is the restriction of $f$ to $I$.\\
Replacing the chord $\C_0=[ A B]$ with a smaller chord of $\curve{AB}$ parallel to $\C_0$, 
we may assume that
\begin{equation}\label{MainGeomHyp}
0<\eta<\min\left\{\frac{1}{2}\,;\,\dfrac{1}{16\|f''\|_{L^\infty([0,\eta])}^2}\,;\,\dfrac{1}{2\|f'\|_{L^\infty([0,\eta])}\|f''\|_{L^\infty([0,\eta])}}\right\}.
\end{equation}
We may also assume that
\begin{itemize}
\item Letting  $D^+_0$ be the bounded open set s.t. $\p D_0^+=[AB]\cup\curve{AB}$ we have $\Pi_{\p\O}$, the orthogonal projection on $\p\O$, is well defined and of class $C^1$ in $D^+_0$.
\item We have
 \begin{equation}\label{AltFondEq}
 1+4\|f''\|^2_{L^\infty}{\rm diam}(D_0^+)<\dfrac{16}{9}
\end{equation}
where ${\rm diam}(D_0^+)=\sup\{\dist(M,N)\,;\,M,N\in D_0^+\}$. [Here we used \eqref{KeyArgRestriction}]
\end{itemize}
\subsection{Step 2: Iterative construction}\label{SectHerdityCOnstruKantor}
We are now in position to construct the Cantor type set $\K$ as a subset of $\curve{AB}$. The construction is iterative. 

The goal of the construction is to get at step $N\geq0$ a collection of $2^N$ pairwise disjoint curves included in $\curve{AB}$ [denoted by $\{K^N_1,...,K^N_{2^N}\}$] and their chords [denoted by $\{\C^N_1,...,\C^N_{2^N}\}$].

The idea is standard: at the step $N\geq0$ we replace a curve $ \Gamma_0$ included in $\curve{AB}$ by two curves included in $\Gamma_0$ (see Figure \ref{Heredite}).
\\{\bf Initialization.} We initialize the procedure by letting $K_1^0:=\curve{AB}$ and $\C_1^0=\C_0=[AB]$. 
 \\
 
 At step $N\geq0$ we have: 
 \begin{itemize}
 \item A set of $2^N$  curves included in $\curve{AB}$, $\{K_1^N,...,K_{2^N}^N\}$. The curves $K_k^N$'s are mutually disjoint. We let $\K_N=\cup_{k=1}^{2^N}K_k^N$.
 \item A set of $2^N$  chords, $\{\C_1^N,...,\C_{2^N}^N\}$ s.t. for $k=1,...,2^N$, $\C_k^N$ is the chord of $K_k^N$.
 \end{itemize}
 \begin{remark}\label{ConstructionFact}
 \begin{enumerate}
 \item\label{RemCorde} Note that since  the $\C_k^N$'s are chords of $\curve{AB}$ and since in the frame $\repereO=( A,{\bf e}_1,{\bf e}_2)$, $\curve{AB}$ is the graph of a function, none of the chords $\C_k^N$ is vertical, {\it i.e.}, directed by  ${\bf e}_2$.

Since the chords $\C_k^N$ are not vertical, for $k\in\{1,...,2^N\}$,  we may define $\nu_{\C_k^N}$ as the unit vector orthogonal to $\C_k^N$ s.t. $\nu_{\C_k^N}=\alpha {\bf e}_1+\beta {\bf e}_2$ with $\beta>0$. 
 \item\label{ConstrFact2} For $\eta$ satisfying \eqref{MainGeomHyp}, 
if we consider a chord $\C_k^N$ and a straight line $D$ orthogonal to $\C_k^N$ and intersecting $\C_k^N$, then the straight line $D$ intersect $K_k^N$ at exactly one points. This fact is proved in Appendix \ref{ProofOfChianteRemark}.

 \end{enumerate}
 \end{remark}
{\bf Induction rules.} From step $N\geq0$ to step $N+1$ we follow the following rules:
\begin{enumerate}
\item For each $k\in\{1,...,2^N\}$, we let $\eta_k^N$ be the length of  $\C_k^N$. Inside the chord $\C_k^N$ we center a segment $I_k^N$ of length $(\eta_k^N)^2$. 
\item With the help of Remark \ref{ConstructionFact}.2, we may define two distinct points of $K_k^N$ as the intersection of $K_k^N$ with straight lines orthogonal to $\C_k^N$ which pass  to the endpoints of $I_k^N$. 
\item These intersection points are the endpoints of a curve $\tilde K_k^N$ included in $K_k^N$.  We let $K_{2k-1}^{N+1}$ and $K_{2k}^{N+1}$ be the connected components of $ K_k^N\setminus \overline{\tilde  K_k^N}$. We let also \begin{itemize}
 \item $\C_{2k-1}^{N+1}$ and $\C_{2k}^{N+1}$ be the corresponding chords;
 \item $\K_{N+1}=\cup_{k=1}^{2^{N+1}}K_{k}^{N+1}$.
 \end{itemize}
\end{enumerate}
\begin{notation}\label{Not.FatherChord}
A natural terminology consists in defining the father and the sons of a chord or  a curve:
\begin{itemize}
\item  $\mathcal{F}(\C_{2k-1}^{N+1})=\mathcal{F}(\C_{2k}^{N+1})=\C_{k}^{N}$ is the {\it father} of the chords $\C_{2k-1}^{N+1}$ and $\C_{2k}^{N+1}$.

$\mathcal{F}(K_{2k-1}^{N+1})=\mathcal{F}(K_{2k}^{N+1})=K_{k}^{N}$ is the {\it father} of the curves $K_{2k-1}^{N+1}$ and $K_{2k}^{N+1}$.
\item $\mathcal{S}(\C_{k}^{N})=\{\C_{2k-1}^{N+1},\C_{2k}^{N+1}\}$ is the set of sons of the chord $\C_{k}^{N}$, {\it i.e.}, $\mathcal{F}(\C_{2k-1}^{N+1})=\mathcal{F}(\C_{2k}^{N+1})=\C_{k}^{N}$.

$\mathcal{S}(K_{k}^{N})=\{K_{2k-1}^{N+1},K_{2k}^{N+1}\}$ is the set of sons of the curve $K_{k}^{N}$, {\it i.e.}, $\mathcal{F}(K_{2k-1}^{N+1})=\mathcal{F}(K_{2k}^{N+1})=K_{k}^{N}$.
\end{itemize}
\end{notation}

The inductive procedure is represented in Figure \ref{Heredite}.

\begin{figure}[H]
\psset{xunit=.70cm,yunit=.70cm,algebraic=true,dimen=middle,dotstyle=o,dotsize=3pt 0,linewidth=0.8pt,arrowsize=3pt 2,arrowinset=0.25}
\begin{pspicture*}(-0.5,-2)(25,3)
\parametricplot[linewidth=3.6pt]{0.7853981633974483}{2.356194490192345}{1*5.66*cos(t)+0*5.66*sin(t)+4|0*5.66*cos(t)+1*5.66*sin(t)+-4}
\psline(0,0)(8,0)
\psline(9.5,0.5)(9.5,-0.5)
\psline(12,-0.5)(9.5,-0.5)
\psline(9.5,0.5)(12,0.5)
\psline(12,1)(12,-1)
\psline(12,-1)(13.5,0)
\psline(12,1)(13.5,0)
\parametricplot{0.7853981633974483}{2.356194490192345}{1*5.66*cos(t)+0*5.66*sin(t)+20|0*5.66*cos(t)+1*5.66*sin(t)+-4}
\parametricplot[linewidth=3.6pt]{0.7853981633974483}{1.5000370279717954}{1*5.66*cos(t)+0*5.66*sin(t)+20|0*5.66*cos(t)+1*5.66*sin(t)+-4}
\parametricplot[linewidth=3.6pt]{1.6433575184321727}{2.356194490192345}{1*5.66*cos(t)+0*5.66*sin(t)+20|0*5.66*cos(t)+1*5.66*sin(t)+-4}
\psline(19.59,0)(19.59,1.7)
\psline(20.4,0)(20.4,1.7)
\psline(16,0)(24,0)
\psline{|-|}(19.59,-0.2)(20.4,-0.2)
\rput(20,-.6){$(\eta_N^k)^2$}
\psline{|-|}(16,-1)(24,-1)
\rput(20,-1.3){$\eta_N^k$}
\psline(20.4,0)(20.5,0)
\psline(20.5,0.1)(20.4,0.1)
\psline(20.5,0.1)(20.5,0)
\psline(19.59,0.1)(19.48,0.1)
\psline(19.48,0.1)(19.48,0)
\end{pspicture*}
\caption{Induction step}\label{Heredite}
\end{figure}
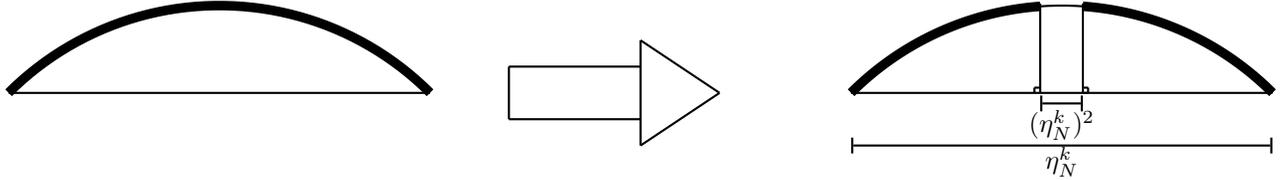

In Figure \ref{first}$\&$\ref{second} the two first iterations of the process are represented.
\begin{figure}[h] \begin{minipage}[b]{0.49\linewidth}\centering
\newrgbcolor{xdxdff}{0.49 0.49 1}
\psset{xunit=.45cm,yunit=.45cm,algebraic=true,dimen=middle,dotstyle=o,dotsize=3pt 0,linewidth=0.8pt,arrowsize=3pt 2,arrowinset=0.25}
\begin{pspicture*}(2.7,-1)(21,5)
\parametricplot{0.6035949011944017}{2.5379977523953916}{1*11.03*cos(t)+0*11.03*sin(t)+11.92|0*11.03*cos(t)+1*11.03*sin(t)+-6.26}
\psline(2.84,0)(21,0)
\psline(10.98,0)(10.98,4.8)
\psline(12.86,0)(12.86,4.8)
\parametricplot[linewidth=4pt]{1.6561314031258352}{2.5379977523953916}{1*11.03*cos(t)+0*11.03*sin(t)+11.92|0*11.03*cos(t)+1*11.03*sin(t)+-6.26}
\parametricplot[linewidth=3.6pt]{0.6035949011944017}{1.4854612504639584}{1*11.03*cos(t)+0*11.03*sin(t)+11.92|0*11.03*cos(t)+1*11.03*sin(t)+-6.26}
\begin{scriptsize}
\rput[bl](11.14,6){$b$}
\rput[bl](13,6){$e$}

\end{scriptsize}
\end{pspicture*}\caption{First iteration of the process}\label{first}
\end{minipage}
 \begin{minipage}[b]{0.49\linewidth}\centering
\newrgbcolor{xdxdff}{0.49 0.49 1}
\psset{xunit=.45cm,yunit=.45cm,algebraic=true,dimen=middle,dotstyle=o,dotsize=3pt 0,linewidth=0.8pt,arrowsize=3pt 2,arrowinset=0.25}
\begin{pspicture*}(1.5,-1)(22,5)
\parametricplot{0.6035949011944017}{2.5379977523953916}{1*11.03*cos(t)+0*11.03*sin(t)+11.92|0*11.03*cos(t)+1*11.03*sin(t)+-6.26}
\psline(2.84,0)(21,0)
\psline(2.84,0)(10.98,4.73)
\psline(12.86,4.73)(21,0)
\parametricplot{1.6561314031258352}{2.5379977523953916}{1*11.03*cos(t)+0*11.03*sin(t)+11.92|0*11.03*cos(t)+1*11.03*sin(t)+-6.26}
\parametricplot[linewidth=3.6pt]{2.1221806941009618}{2.5379977523953916}{1*11.03*cos(t)+0*11.03*sin(t)+11.92|0*11.03*cos(t)+1*11.03*sin(t)+-6.26}
\parametricplot[linewidth=3.6pt]{1.6561314031258352}{2.071948461420265}{1*11.03*cos(t)+0*11.03*sin(t)+11.92|0*11.03*cos(t)+1*11.03*sin(t)+-6.26}
\parametricplot[linewidth=3.6pt]{0.6035949011944017}{1.0194119594888322}{1*11.03*cos(t)+0*11.03*sin(t)+11.92|0*11.03*cos(t)+1*11.03*sin(t)+-6.26}
\parametricplot[linewidth=3.6pt]{1.0696441921695303}{1.4854612504639584}{1*11.03*cos(t)+0*11.03*sin(t)+11.92|0*11.03*cos(t)+1*11.03*sin(t)+-6.26}
\psplot{6.15}{6.655}{(-64.82--8.14*x)/-4.73}
\psplot{6.6}{7.17}{(-70.03--8.14*x)/-4.73}
\psplot{16.7}{17.25}{(-124.02--8.14*x)/4.73}
\psplot{17.15}{17.75}{(-129.24--8.14*x)/4.73}
\begin{scriptsize}



\end{scriptsize}
\end{pspicture*}\caption{Second iteration of the process}\label{second}\end{minipage}
\end{figure}

We now define the  Cantor type set 
\begin{equation}\label{defKinffty}
\K=\bigcap_{N\geq0} \overline{\K_N}.
\end{equation}
The Cantor type set $\K$ is fat:
\begin{prop}
We have $\H^1(\K)>0$.
\end{prop}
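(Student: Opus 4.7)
The plan is to show $\H^1(\K)>0$ by controlling the total arclength discarded through the inductive construction. Since $\overline{\K_N}$ is a decreasing sequence of compact sets with $\H^1(\overline{\K_0})=\H^1(\curve{AB})<\infty$, monotone convergence gives
\[
\H^1(\K)=\H^1(\curve{AB})-\sum_{N=0}^\infty\sum_{k=1}^{2^N}\H^1(\tilde K_k^N),
\]
so it is enough to check that the double series on the right is strictly smaller than $\H^1(\curve{AB})$. The strategy is to establish the pointwise bound $\H^1(\tilde K_k^N)\leq D(\eta_k^N)^2$ with $D$ depending only on $\|f'\|_{L^\infty}$ and $\|f''\|_{L^\infty}$, and then to show that $\sum_k(\eta_k^N)^2$ decays geometrically in $N$.

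For the pointwise bound I would use the strict concavity of $f$: by a Taylor expansion combined with \eqref{KeyArgRestriction}, the arc $K_k^N$ deviates from its chord $\C_k^N$ by at most $\tfrac18\|f''\|_{L^\infty}(\eta_k^N)^2$. The endpoints $P,Q$ of $\tilde K_k^N$ project onto $\C_k^N$ at the endpoints of $I_k^N$, which are $(\eta_k^N)^2$ apart, so decomposing $Q-P$ along $\C_k^N$ and along its normal $\nu_{\C_k^N}$ and projecting onto ${\bf e}_1$ yields $|x_Q-x_P|\leq(1+\tfrac14\|f''\|_{L^\infty})(\eta_k^N)^2$; since $\tilde K_k^N$ is the graph of $f$ over $[x_P,x_Q]$, the Lipschitz bound on $f$ then gives the claim. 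For the geometric decay, the same sag estimate applied to each son chord shows $\eta_{2k-1}^{N+1},\eta_{2k}^{N+1}\leq\tfrac{\eta_k^N}{2}+C(\eta_k^N)^2$ with $C$ depending only on $\|f''\|_{L^\infty}$. Under \eqref{MainGeomHyp} the quantity $\alpha:=\tfrac{1}{2}+C\eta$ satisfies $\alpha<1$, hence $\max_k\eta_k^N\leq\eta\alpha^N$ by induction, and the coarse inequality $\sum_k\eta_k^{N+1}\leq(1+2C\max_k\eta_k^N)\sum_k\eta_k^N$ combined with a telescoping product gives a uniform bound on $\sum_k\eta_k^N$. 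Combining,
\[
\sum_k(\eta_k^N)^2\leq\max_k\eta_k^N\cdot\sum_k\eta_k^N\leq C'\eta^2\alpha^N,
\]
so the total removed length is $O(\eta^2)$, strictly smaller than $\H^1(\curve{AB})\geq\eta$ thanks to the smallness of $\eta$ imposed by \eqref{MainGeomHyp}.

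The main obstacle is the pointwise bound on $\H^1(\tilde K_k^N)$: the perpendiculars delimiting $\tilde K_k^N$ are orthogonal to the tilted chord $\C_k^N$ rather than to ${\bf e}_1$, so one must carefully convert a length $(\eta_k^N)^2$ along the chord direction into a bound on the horizontal extent of the arc. This is the only place where the $C^2$ regularity of $\p\O$ and the quantitative smallness of $\eta$ in \eqref{MainGeomHyp} (and \eqref{AltFondEq}) genuinely enter: they force the perpendicular sag of the curve above its chord to be $O((\eta_k^N)^2)$, which is what closes the estimate. Once this geometric ingredient is in hand, the remainder is a standard fat-Cantor bookkeeping argument.
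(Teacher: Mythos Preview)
Your approach is sound but genuinely different from the paper's, and it carries a quantitative caveat worth flagging.

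\textbf{Comparison.} The paper never estimates the removed arcs $\tilde K_k^N$ at all. Instead it works entirely with chord lengths: writing $c_N=\sum_k\eta_k^N$ and $\mu_N=\max_k\eta_k^N$, it first proves $\mu_{N+1}\le\tfrac23\mu_N$ using \eqref{AltFondEq}, and then observes from Pythagoras that each son chord is the hypotenuse of a right triangle with a leg of length $\tfrac12(\eta_k^N-(\eta_k^N)^2)$, whence
\[
\eta_{2k-1}^{N+1}+\eta_{2k}^{N+1}\ge \eta_k^N-(\eta_k^N)^2,\qquad c_{N+1}\ge c_N(1-\mu_N).
\]
The infinite product $\prod_{k\ge1}[1-(2/3)^k]$ is positive, so $\liminf c_N>0$; since $\H^1(\K_N)\ge c_N$, the result follows. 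The virtue of this route is that the key inequality (hypotenuse $\ge$ leg) has \emph{no hidden constants}, so positivity of the limit reduces to convergence of a series with ratio $2/3$.

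Your route computes $\H^1(\K)=\H^1(\curve{AB})-\sum_{N,k}\H^1(\tilde K_k^N)$ and bounds the removed arcs by $D(\eta_k^N)^2$. This is correct, and together with $\mu_N\le\eta\alpha^N$ and the uniform bound on $\sum_k\eta_k^N$ it gives a total removed length of order $D\eta^2/(1-\alpha)$.

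\textbf{The caveat.} The last step needs $D\eta/(1-\alpha)<1$, where $D$ depends on $\|f'\|_{L^\infty}$ and $\|f''\|_{L^\infty}$. The specific constraints recorded in \eqref{MainGeomHyp} and \eqref{AltFondEq} do not obviously imply this (there is no smallness of $\eta$ against $\sqrt{1+\|f'\|_{L^\infty}^2}$ or against $\|f''\|_{L^\infty}$ alone encoded there). The fix is harmless in the paper's setup --- one may shrink $\eta$ further while keeping $\|f'\|_{L^\infty},\|f''\|_{L^\infty}$ bounded, by \eqref{KeyArgRestriction} --- but you should say this explicitly rather than attributing it to \eqref{MainGeomHyp}. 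The paper's argument sidesteps this issue entirely, which is why it is the cleaner of the two.
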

This proposition is proved in Appendix \ref{ProofOfPropDefKinkdfg}.
\section{Construction of a sequence of functions}\label{SectionConstructFunction}
A key argument in the proof of Theorem \ref{MainTHM} is the use of the coarea formula to calculate a lower bound for \eqref{LeProb}. The coarea formula is applied to a function adapted to the set $\K$.

For $N=0$ we let
\begin{itemize}
\item[$\bullet$] $D_0^+$ be the compact set delimited by $K_0=\curve{AB}$ and $\C_1^0:=[AB]$ the chord of $K_0$.
\item[$\bullet$] We  recall that we fixed a frame $\repereO=( A,{\bf e}_1,{\bf e}_2)$ where ${\bf e}_1={\vecc{ A B}}/{|\vecc{ A B}|}$.  For $\sigma=(\sigma_1,0)\in \C_1^0$, we define:
\begin{equation}\label{defISigma}
\text{${I}_\sigma$ is the connected component of $\{(\sigma_1,t)\in\O\,;\,t\leq0\}$ which contains $\sigma$.}
\end{equation} [${I}_\sigma$ is a vertical segment included in $\O$]. 
\item[$\bullet$] $D_0^-=\cup_{\sigma\in{\C_1^0}}I_ \sigma$.
\item[$\bullet$]We now define the maps
\[
\begin{array}{cccc}
\tilde\Psi_0:&D_0^-&\to&\C_1^0\\&x&\mapsto&\Pi_{\C_1^0}(x)
\end{array}
\]
and
\[
\begin{array}{cccc}
\Psi_0:&D_0^-\cup D_0^+&\to&\C_1^0\\&x&\mapsto&\begin{cases}\Pi_{\p\O}(x)&\text{if }x\in D_0^+\\\Pi_{\p\O}[\tilde\Psi_0(x)] &\text{if }x\in D_0^-\end{cases}

\end{array}
\]
where $\Pi_{\p\O}$ is the orthogonal projection on $\p\O$ and $\Pi_{\C_1^0}$ is the orthogonal projection on $\C_1^0$. Note that, in the frame $\repereO$, for $x=(x_1,x_2)\in D_0^-$, we have $\Pi_{\C_1^0}(x)=(x_1,0)$.
\end{itemize}
For $N=1$ and $k\in \{1,2\}$ we let:
\begin{itemize}
\item $D_{k}^1$ be the compact set delimited by  $K_k^1$ and $\C_k^1$ ;
\item $T_k^1$ be the compact right-angled triangle (with its interior) having $\C_k^1$ as side adjacent to the right angle and whose hypothenuse is included in $\C_1^0$;

\item $H_k^1$ be the hypothenuse of $T_k^1$. 
\end{itemize}
We now define $D_1^-=\tilde{\Psi}_0^{-1}(H_1^1\cup H_2^1)$, $T_1=T_1^1\cup T_2^1$ and $D_1^+=D_1^1\cup D_2^1$. 

 We first consider the map
\[
\begin{array}{cccc}
\tilde\Psi_1:&T_1\cup D_1^-&\to&\C_1^1\cup\C_2^1\\&x&\mapsto&\begin{cases}\Pi_{\C_k^1}(x)&\text{if }x\in T_k^1\\\Pi_{\C_k^1}[\tilde\Psi_0(x)] &\text{if }x\in  D_1^-\end{cases}.
\end{array}
\]
In Appendix \ref{AppeSepTr} [Lemma \ref{PropMainGeomHyp} and Remark \ref{RemPropMainGeomHyp}], it is proved  that  the triangles $T_1^1$ and $T^1_2$ are disjoint. Thus the map  $\tilde\Psi_{1}$ is well defined

By projecting $\C_1^1\cup\C_2^1$ on $\p\O$ we get
\[
\begin{array}{cccc}
\Psi_1:&T_1\cup D_1^-\cup D_1^+&\to&\K_1\\&x&\mapsto&\begin{cases}\Pi_{\p\O}(x)&\text{if }x\in D_1^+\\\Pi_{\p\O}[\tilde\Psi_1(x)] &\text{if }x\in T_1\cup D_1^-\end{cases}
\end{array}.
\]

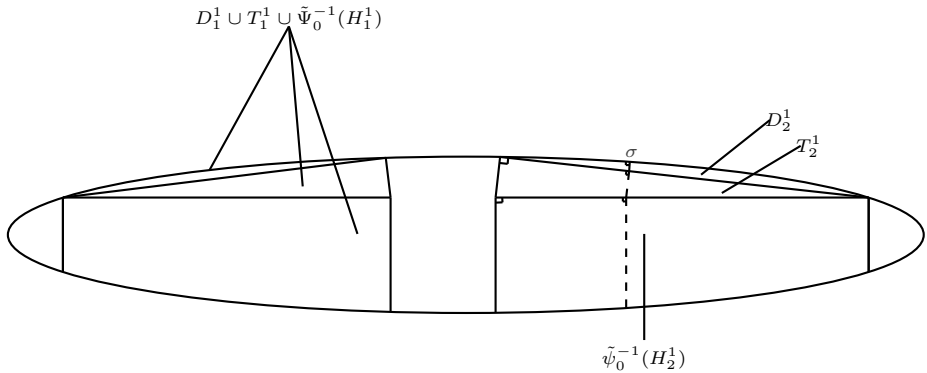
\begin{figure}[H]\begin{center}
\psset{xunit=.400cm,yunit=.40cm,algebraic=true,dimen=middle,dotstyle=o,dotsize=3pt 0,linewidth=0.8pt,arrowsize=3pt 2,arrowinset=0.25}
\begin{pspicture*}(-1,-5.4)(31,7.8)
\rput{0}(15,0){\psellipse(0,0)(15.22,2.6)}
\psline(1.61,1.24)(12.34,2.56)
\psline(16.14,2.59)(28.39,1.24)
\psline(12.34,2.56)(12.5,1.24)
\psline(15.99,1.24)(16.14,2.59)
\psline(1.61,1.24)(1.61,-1.24)
\psline(12.5,1.24)(12.5,-2.56)
\psline(15.99,1.24)(15.99,-2.59)
\psline(28.39,1.24)(28.39,-1.24)
\psline[linestyle=dashed,dash=3pt 3pt](20.33,1.24)(20.43,2.12)
\psline[linestyle=dashed,dash=3pt 3pt](20.33,1.24)(20.33,-2.43)
\psline[linestyle=dashed,dash=3pt 3pt](20.45,2.43)(20.43,2.12)
\psline(16.12,2.39)(16.39,2.36)
\psline(16.39,2.36)(16.41,2.56)
\psline(20.32,2.32)(20.32,2.43)
\psline(20.32,2.32)(20.45,2.32)
\psline(20.34,2.13)(20.32,2)
\psline[linestyle=dashed,dash=3pt 3pt](20.32,2)(20.42,2)
\psline(20.22,1.24)(20.22,1.11)
\psline(20.22,1.11)(20.33,1.11)
\psline(15.99,1.07)(16.22,1.08)
\psline(16.22,1.08)(16.21,1.24)
\psline(1.61,1.24)(12.5,1.24)
\psline(15.99,1.24)(28.39,1.24)
\psline(1.61,1.24)(1.61,-1.24)
\psline(28.39,1.24)(28.39,-1.24)
\psline(22.82,2)(25.11,3.82)
\psline(23.51,1.39)(26.13,2.96)
\psline(20.93,0.03)(20.93,-3.5)
\psline(6.48,2.15)(9.12,6.93)
\psline(9.58,1.61)(9.12,6.93)
\psline(11.4,0.03)(9.12,6.93)
\begin{scriptsize}
\rput[bl](20.3,2.6){\darkgray{$\sigma$}}
\rput(25.4,3.82){$D_2^1$}
\rput(26.4,2.96){$T_2^1$}
\rput(20.93,-4.1){$\tilde{\psi}_0^{-1}(H^1_2)$}
\rput(9.12,7.2){$D_1^1\cup T^1_1\cup\tilde{\Psi}_0^{-1}(H^1_1)$}
\end{scriptsize}
\end{pspicture*}\caption{The sets defined at Step $N=1$ and the dashed level line  of $\Psi_1$ associated to $\sigma\in\K_1$}\end{center}
\end{figure}

For $N\geq1$, we first construct $\tilde\Psi_{N+1}$ and then $\Psi_{N+1}$ is obtained from $\tilde\Psi_{N+1}$ and $\Pi_{\p\O}$.

For $k\in\{1,...,2^{N+1}\}$, we let
\begin{itemize}
\item $D^{N+1}_k$ be the compact set delimited by $K^{N+1}_k$ and $\C_k^{N+1}$ [recall that $\C_k^{N+1}$ is the chord associated to $K^{N+1}_k$] ;
\item $T_k^{N+1}$ be the right-angled triangle (with its interior) having $\C_k^{N+1}$ as side adjacent to the right angle and whose hypothenuse is included in $\mathcal{F}(\C_k^{N+1})$.  Here $\mathcal{F}(\C_k^{N+1})$ is the father of $\C_k^{N+1}$ (see Notation \ref{Not.FatherChord});
\item $H_k^{N+1}\subset\mathcal{F}(\C_k^{N+1})$  be the hypothenuse of $T_k^{N+1}$.
\end{itemize}
 We denote  $T_{N+1}=\displaystyle\bigcup_{k=1}^{2^{N+1}}T_k^{N+1}$, $D_{N+1}^-=\tilde{\Psi}_{N}^{-1}\left(\displaystyle\bigcup_{k=1}^{2^{N+1}}H_k^{N+1}\right)$ and $D_{N+1}^+=\displaystyle\bigcup_{k=1}^{2^{N+1}}D_k^{N+1}$.
 \begin{figure}[h!]\begin{center}
\psset{xunit=.10cm,yunit=.10cm,algebraic=true,dimen=middle,dotstyle=o,dotsize=3pt 0,linewidth=0.8pt,arrowsize=3pt 2,arrowinset=0.25}
\begin{pspicture*}(-18,-40)(72.5,25)
\parametricplot{0.6900067105221149}{2.4407382457053424}{1*57.41*cos(t)+0*57.41*sin(t)+28.06|0*57.41*cos(t)+1*57.41*sin(t)+-47.5}
\psline(-15.82,-10.48)(72.34,-10.96)
\psline[linewidth=2pt](-15.82,-10.48)(13.32,7.98)
\psline[linewidth=2pt](42.59,8.04)(72.34,-10.96)
\psline[linewidth=2pt](30.6,-10.73)(42.59,8.04)
\psline[linewidth=2pt](-15.82,-10.48)(25.16,-10.7)
\psline[linewidth=2pt](13.32,7.98)(25.16,-10.7)
\psline[linewidth=2pt](30.6,-10.73)(72.34,-10.96)
\psline(11.74,6.98)(12.78,5.64)
\psline(12.78,5.64)(14.17,6.63)
\psline(44.17,6.76)(42.94,5.42)
\psline(42.94,5.42)(41.81,6.39)
\parametricplot[linewidth=2.8pt]{0.6900067105221149}{1.3149902001887745}{1*57.41*cos(t)+0*57.41*sin(t)+28.06|0*57.41*cos(t)+1*57.41*sin(t)+-47.5}
\parametricplot[linewidth=2.8pt]{1.8305058922313753}{2.4407382457053424}{1*57.41*cos(t)+0*57.41*sin(t)+28.06|0*57.41*cos(t)+1*57.41*sin(t)+-47.5}
\psline(-1.09,20)(-0.27,2.69)
\psline(-15,20)(-5,-2)
\psline(53.32,20)(55.91,2.88)
\psline(14.19,20)(10.98,-1.9)
\psline(26.95,20)(14.19,-10.35)
\psline(44.01,20)(32.35,3.5)
\psline[linewidth=2pt](-15.82,-10.48)(-15.82,-20)\psline[linewidth=2pt,linestyle=dashed](-15.82,-30)(-15.82,-20)
\psline[linewidth=2pt](25,-10.48)(25,-20)\psline[linewidth=2pt,linestyle=dashed](25,-30)(25,-20)
\psline[linewidth=2pt](30.5,-10.48)(30.5,-20)\psline[linewidth=2pt,linestyle=dashed](30.5,-30)(30.5,-20)
\psline[linewidth=2pt](72.2,-10.48)(72.2,-20)\psline[linewidth=2pt,linestyle=dashed](72.2,-30)(72.2,-20)
\begin{scriptsize}
\rput[bl](-3,21){{$K_{2k-1}^{N+1}$}}
\rput[bl](-18,21){{$D_{2k-1}^{N+1}$}}
\rput[bl](52,20){$K_{2k}^{N+1}$}
\rput[bl](13,20){{$T_{2k-1}^{N+1}$}}
\rput[bl](26,20){$H_{2k-1}^{N+1}$}
\rput[bl](43,20){$D_k^{N}$}
\end{scriptsize}
\end{pspicture*}
\caption{Induction. The bold lines correspond to the new iteration}\label{HereditePsi}\end{center}
\end{figure}
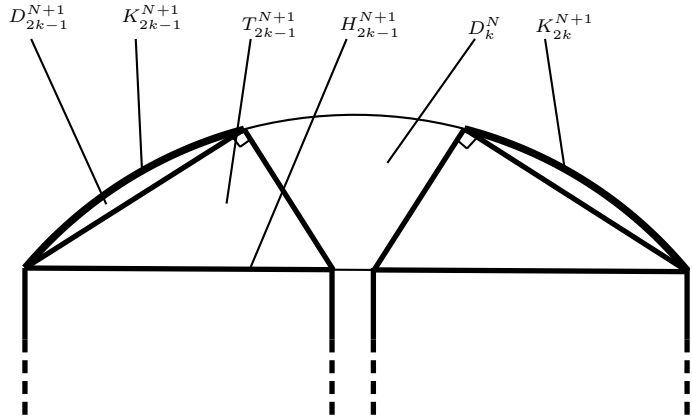

\begin{remark}\label{RmDefVinfty}It is easy to check that for $N\geq 0$:
\begin{enumerate}
\item  $T_{N+1}\subset D_N^+$,
\item\label{Arg1DefVinfty} if  $x\in \,\stackrel{\circ}{T_N}$ then $x\notin T_{N'}$ for $N'\geq N+1$ [here $T_0=\emptyset$].
\end{enumerate}
\end{remark}
 We now define 
\[
\begin{array}{cccc}
\tilde\Psi_{N+1}:&T_{N+1}\cup D_{N+1}^-&\to&\cup_{k=1}^{2^{N+1}}\C_k^{N+1}\\&x&\mapsto&\begin{cases}\Pi_{\C_k^{N+1}}(x)&\text{if }x\in T_k^{N+1}\\\Pi_{\C_k^{N+1}}[\tilde{\Psi}_N(x)] &\text{if }x\in \tilde{\Psi}_{N}^{-1}(\cup_{k=1}^{2^{N+1}}H_k^{N+1})\end{cases}
\end{array}.
\]
In Appendix \ref{AppeSepTr} [Lemma \ref{PropMainGeomHyp} and Remark \ref{RemPropMainGeomHyp}], it is proved  that for $N\geq1$, the triangles $T_k^N$ for $k=1,...,2^N$ are mutually disjoint. recursively, we find that all the $\tilde\Psi_N$'s are well-defined.

And, as in the Initialization Step, we get  $\Psi_{N+1}$ from $\tilde\Psi_{N+1}$ by projecting $\cup_{k=1}^{2^{N+1}}\C_k^{N+1}$ on $\p\O$:
\[
\begin{array}{cccc}
\Psi_{N+1}:&T_{N+1}\cup D_{N+1}^-\cup D_{N+1}^+&\to&\K_{N+1}\\&x&\mapsto&\begin{cases}\Pi_{\p\O}[\tilde\Psi_{N+1}(x)]&\text{if }x\in T_{N+1}\cup D_{N+1}^-\\\Pi_{\p\O}(x) &\text{if }x\in D_{N+1}^+\end{cases}
\end{array}.
\]
It is easy to see that $\Psi_{N+1}(T_{N+1}\cup D_{N+1}^-\cup D_{N+1}^+)=\K_{N+1}$.
\section{Basic properties of $B_\infty$ and $\Psi_N$}\label{SectionSommePromqljkdsgn}
\subsection{Basic properties of $B_\infty$} 
We set $B_N={T_{N}\cup D_{N}^+\cup D_{N}^-}$. It is easy to check that for $N\geq0$ we have $B_{N+1}\subset B_N$ and  $\K\subset\p B_N$. Therefore we may define
\[
B_\infty=\cap_{N\geq0}\overline{B_N}
\]
which is compact and  satisfies $\K\subset\p B_\infty$.

We are going to prove:
\begin{lem}\label{LemADem}
The interior of $B_\infty$ is empty.
\end{lem}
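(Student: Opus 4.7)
The plan is to prove that for every $N\ge 0$ each connected component of $B_N$ has diameter in the horizontal direction ${\bf e}_1$ bounded by $C\,\eta/2^N$ for a constant $C=C(f)$. Since an open ball is connected and $B_N$ is closed, a ball $B(x_0,r)\subset B_\infty\subset B_N$ would have to sit inside a single connected component of each $B_N$; forcing $2r\le C\,\eta/2^N\to 0$ gives a contradiction, so $\stackrel{\circ}{B_\infty}=\emptyset$.

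To make this precise, I would first verify by induction on $N$ that $B_N$ has exactly $2^N$ connected components $\tau_k^N$ ($k\in\{1,\dots,2^N\}$), with $D_k^N\subset\tau_k^N$. Setting $k^{(j)}:=\lceil k/2^j\rceil$, the tower $\tau_k^N$ is obtained by gluing the cap $D_k^N\cup T_k^N$ (joined through the shared chord $\C_k^N$) to a family of strips $\Sigma_{k,j}:=\Pi_{\C_{k^{(j)}}^{N-j}}^{-1}(J_j)\cap T_{k^{(j)}}^{N-j}$ sitting inside the ancestor triangles, where $J_1=H_k^N$ and $J_{j+1}$ is the image of $\Sigma_{k,j}$ on the hypothenuse of $T_{k^{(j)}}^{N-j}$ (contained in $\C_{k^{(j+1)}}^{N-j-1}$), and finally to the vertical strip of $D_0^-$ lying under the sub-segment $J_{N-1}\subset\C_0$. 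Consecutive pieces are glued along a common chord-segment, while distinct towers are disjoint because sibling towers are separated by the central strip removed at their common ancestor's level.

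The horizontal-width bound is the quantitative heart. The cap has ${\bf e}_1$-extent at most $\eta_k^N+\|f''\|_\infty(\eta_k^N)^2$. The stretching at step $j$ satisfies $|J_{j+1}|/|J_j|=1/\cos\alpha_j$, where $\alpha_j$ is the angle between the consecutive chords $\C_{k^{(j)}}^{N-j}$ and $\C_{k^{(j+1)}}^{N-j-1}$; the mean value theorem applied to $f'$ on the parent chord gives $|\alpha_j|\le \|f''\|_\infty\,\eta_{k^{(j+1)}}^{N-j-1}\le C_1\eta/2^{N-j-1}$. Consequently
\[
\prod_{j\ge 0}\frac{1}{\cos\alpha_j}\le\exp\Bigl(C_2\sum_{j\ge 0}\bigl(\eta/2^{N-j-1}\bigr)^2\Bigr)\le\exp(C_3\eta^2),
\]
which is bounded by an absolute constant $C_0$ thanks to the smallness hypothesis \eqref{MainGeomHyp}. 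Hence $|J_j|\le C_0\,\eta_k^N\le C_0\,\eta/2^N$ uniformly in $j$, every strip $\Sigma_{k,j}$ and the bottom vertical strip have ${\bf e}_1$-extent at most $C_0\,\eta/2^N$, and assembling the pieces yields $\mathrm{diam}_{{\bf e}_1}(\tau_k^N)\le C\,\eta/2^N$ as claimed.

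The main technical obstacle is the careful bookkeeping of the chain of strips in the ancestor triangles and the precise control of the cumulative product $\prod(1/\cos\alpha_j)$; hypothesis \eqref{MainGeomHyp} is designed exactly so that this infinite product converges to a finite constant.
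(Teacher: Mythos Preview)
Your approach is sound and reaches the goal, but via a genuinely different route from the paper. The paper does not describe the towers at all: it introduces the inradius ${\rm rad}(A)=\sup\{r:\exists\,x,\ \overline{B(x,r)}\subset A\}$, notes that for large $N$ one has ${\rm rad}(B_N)={\rm rad}(B_N\cap D_N^-)$, and then uses the ready-made estimate $\H^1(H_k^{N+1})<\tfrac12\,\H^1\bigl(\mathcal{F}(\C_k^{N+1})\bigr)$ from Lemma~\ref{PropMainGeomHyp} to deduce the clean recursion ${\rm rad}(B_{N+1}\cap D_{N+1}^-)\le\tfrac12\,{\rm rad}(B_N\cap D_N^-)$, whence ${\rm rad}(B_N)\le C_0/2^N$. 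Your product $\prod_j(1/\cos\alpha_j)$ is essentially a rederivation of that hypothenuse estimate from first principles; the explicit tower decomposition makes the geometry very transparent, at the price of more bookkeeping. Both arguments are written at a comparable level of detail.

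Two small points to tighten. First, the chord lengths obey $\eta_k^N\le(2/3)^N\eta$ (this is \eqref{AuxGoodEstReformulate} in the paper), not $\eta/2^N$; this is harmless for the conclusion. Second, the passage ``assembling the pieces yields $\mathrm{diam}_{{\bf e}_1}(\tau_k^N)\le C\,\eta/2^N$'' needs one more sentence: bounding the ${\bf e}_1$-width of each strip separately does not bound the diameter of their union, because the strips sit in tilted triangles and can drift horizontally relative to one another. The quickest repair is to note that the horizontal line through the centre of any ball $B(x_0,r)\subset\tau_k^N$ meets the two bounding level curves of $\tilde\Psi_N$ at points whose $\tilde\Psi_N$-images lie in $\C_k^N$, and then to use that $\tilde\Psi_N$ is a composition of projections whose angles you have already controlled, so that the horizontal separation is $\le\eta_k^N\cdot\prod_j(1/\cos\alpha_j)\le C_0(2/3)^N$; alternatively, simply observe that the paper's recursive inradius bound follows from your angle estimates once you note $1/\cos\alpha_j\le 2$ under \eqref{MainGeomHyp}.
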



\begin{proof}[Proof of Lemma \ref{LemADem}]

From Lemma \ref{PropMainGeomHyp} [and Remark \ref{RemPropMainGeomHyp}] in Appendix \ref{AppeSepTr} combined with Hypothesis \eqref{MainGeomHyp}, we get two fundamental facts:
\begin{enumerate}
\item The triangles $T_1^N$, ... ,$T_{2^{N+1}}^N$ are mutually disjoint.
\item We have: 
\begin{equation}\label{R-Ouf1ii}
\H^1(H_k^{N+1})<\dfrac{\H^1(\mathcal{F}(\C_k^N))}{2}.
\end{equation}
\end{enumerate}

For a non empty set $A\subset\R^2$ we let
\[
{\rm rad}(A)=\sup\{r\geq0\,;\,\exists\,x\in A \text{ s.t. } \overline{B(x,r)}\subset A\}.
\]
Note that the topological interior of $A$ is empty if and only if ${\rm rad}(A)=0$.

On the one hand, it is not difficult to check that for sufficiently large $N$
\begin{equation}\label{StepNRadIEst}
{\rm rad}(B_N)={\rm rad}(B_N\cap D_N^-).
\end{equation}
On the other hand, using \eqref{R-Ouf1ii} we obtain for $N\geq1$: 
\begin{equation}\label{StepNRadIIEst}
{\rm rad}(B_{N+1}\cap D_{N+1}^-)\leq\dfrac{{\rm rad}(B_N\cap D_N^-)}{2}.
\end{equation}
Consequently, by combining \eqref{StepNRadIEst} and \eqref{StepNRadIIEst} we get the existence of $C_0$ s.t.
\begin{equation}\label{StepNRadIIIEst}
{\rm rad}(B_N)\leq\frac{C_0}{2^N}.
\end{equation}

Since $B_\infty=\cap_{N\geq0}B_N$, from  \eqref{StepNRadIIIEst} we get that ${\rm rad}(B_\infty)=0$. 

\end{proof}
\subsection{Basic properties of $\Psi_N$} 
We now prove the key estimate for $\Psi_N$:
\begin{lem}\label{LemInfoPsi}
There exists $b_N=o_N(1)$ s.t. for $N\geq1$ and $U$ a connected component of $B_N$, the restriction of $\Psi_N$ to $U$ is $(1+b_N)$-Lipschitz.
\end{lem}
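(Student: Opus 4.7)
The plan is to rewrite $\Psi_N|_U$ as a composition $\Pi_{\p\O}\circ\hat\Psi_N$, with $\hat\Psi_N$ a $1$-Lipschitz map whose image lies in the thin cap $\overline{D_k^N}$ near $\p\O$, and then to use the $C^2$-regularity of $\p\O$ together with \eqref{AltFondEq} to bound the Lipschitz constant of $\Pi_{\p\O}$ on $\overline{D_k^N}$ by $1+O((\eta_k^N)^2)$.

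\textbf{Construction of $\hat\Psi_N$.} If $U$ is the connected component of $B_N$ associated with $K_k^N$, the nesting $U\subset U_{\lceil k/2\rceil}^{N-1}$ permits the inductive definition
\[
\hat\Psi_N(x)=\begin{cases}x,&x\in D_k^N,\\ \Pi_{\C_k^N}\bigl(\hat\Psi_{N-1}(x)\bigr),&x\in U\setminus D_k^N,\end{cases}
\]
with base case $\hat\Psi_0=\mathrm{id}$ on $D_0^+$ and $\hat\Psi_0=\Pi_{\C_1^0}$ on $D_0^-$. Unrolling the definitions of $\tilde\Psi_N$ and $\Psi_N$, together with the fact that $\hat\Psi_{N-1}=\mathrm{id}$ on $D_{\lceil k/2\rceil}^{N-1}\supset T_k^N$, gives continuity of $\hat\Psi_N$ across $\C_k^N$, the inclusion $\hat\Psi_N(U)\subset\overline{D_k^N}$, and the key factorization $\Psi_N|_U=\Pi_{\p\O}\circ\hat\Psi_N$.

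\textbf{$\hat\Psi_N$ is $1$-Lipschitz.} I proceed by induction on $N$. Within $U\setminus D_k^N$, $\hat\Psi_N$ is the composition of two $1$-Lipschitz maps ($\Pi_{\C_k^N}$ and $\hat\Psi_{N-1}$ by induction). Across the boundary chord $\C_k^N$, take $x\in D_k^N$ and $y\in U\setminus D_k^N$. The geometric key is that $\hat\Psi_{N-1}(y)$ always lies on the side of $\C_k^N$ opposite $K_k^N$: if $y\in T_k^N$ then $\hat\Psi_{N-1}(y)=y$, and by construction $T_k^N$ is placed entirely on the side of $\C_k^N$ opposite $K_k^N$, while if $y\in D_N^-\cap U$ then $\hat\Psi_{N-1}(y)\in H_k^N$, the hypotenuse of $T_k^N$, and therefore on the same opposite side. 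Combining the elementary inequality $|x-\Pi_L(z)|\le|x-z|$, valid whenever $x$ and $z$ lie on opposite sides of a line $L$ (Pythagoras), with $\hat\Psi_{N-1}(x)=x$ and the induction hypothesis applied on $U_{\lceil k/2\rceil}^{N-1}$, one obtains
\[
\bigl|\hat\Psi_N(x)-\hat\Psi_N(y)\bigr|=\bigl|x-\Pi_{\C_k^N}(\hat\Psi_{N-1}(y))\bigr|\le\bigl|x-\hat\Psi_{N-1}(y)\bigr|=\bigl|\hat\Psi_{N-1}(x)-\hat\Psi_{N-1}(y)\bigr|\le|x-y|.
\]

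\textbf{Bounding $\Pi_{\p\O}$ and conclusion.} Since $K_k^N$ lies on the graph of the strictly concave $f$ in the frame $\repereO$, the cap $\overline{D_k^N}$ has thickness at most $c_1\|f''\|_\infty(\eta_k^N)^2$. The standard tubular-neighborhood derivative estimate, combined with \eqref{AltFondEq} (which keeps $1-d(x,\p\O)\|f''\|_\infty$ uniformly positive on $D_0^+$), yields $\mathrm{Lip}\bigl(\Pi_{\p\O}|_{\overline{D_k^N}}\bigr)\le 1+c_2(\eta_k^N)^2$. Hence $\mathrm{Lip}(\Psi_N|_U)\le 1+c_2(\eta_k^N)^2=:1+b_N$, and since each iteration satisfies $\eta_k^{N+1}\le\eta_k^N(1-\eta_k^N)/2$, one has $\max_k\eta_k^N\le\eta/2^N$, so $b_N=O(4^{-N})=o_N(1)$. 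The main technical obstacle is the inductive Lipschitz verification in the previous step, which hinges on the precise geometric placement of $T_k^N$ and $H_k^N$ relative to $\C_k^N$; everything else reduces to a standard tubular-neighborhood estimate.
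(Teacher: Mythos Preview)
Your argument follows the same route as the paper's: write $\Psi_N|_U=\Pi_{\p\O}\circ\hat\Psi_N$ with $\hat\Psi_N$ a $1$-Lipschitz map into $\overline{D_k^N}$, and then bound the Lipschitz constant of $\Pi_{\p\O}$ on the thin cap $\overline{D_k^N}$ by $1+o_N(1)$. The paper's proof simply asserts that $\tilde\Psi_N$ is $1$-Lipschitz on $U\cap(T_N\cup D_N^-)$ ``as a composition of orthogonal projections on straight lines'' and passes directly to the conclusion; you are in fact more thorough, since you treat the cross case $x\in D_k^N$, $y\in U\setminus D_k^N$ explicitly via the half-plane inequality $|x-\Pi_L(z)|\le|x-z|$ for $x,z$ on opposite sides of $L$.

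One small correction at the end: the inequality $\eta_k^{N+1}\le\eta_k^N(1-\eta_k^N)/2$ goes the wrong way. By construction the son chord $\C_{2k-1}^{N+1}$ is the hypotenuse of a right triangle with one cathetus of length $(\eta_k^N-(\eta_k^N)^2)/2$, so $\eta_{2k-1}^{N+1}\ge(\eta_k^N-(\eta_k^N)^2)/2$; the paper instead establishes $\eta_k^{N+1}\le\tfrac{2}{3}\eta_k^N$ (see \eqref{AuxGoodEst}, using \eqref{AltFondEq}). This still gives $\max_k\eta_k^N\le(2/3)^N\to0$, hence $b_N=o_N(1)$, so your conclusion stands. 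Also, the appeal to \eqref{AltFondEq} for the tubular-neighborhood bound is not quite the right reference; what you need is simply that $\Pi_{\p\O}$ is $C^1$ on $D_0^+$ (assumed in the setup) together with the cap-thickness estimate from Lemma~\ref{LCorde}.
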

\begin{proof}
Let $N\geq1$ and $U$ be a connected component of $B_N$. The restriction of  $\tilde{\Psi}_N$ to $U\cap(T_N\cup D_N^-)$ is obtained as composition of orthogonal projections on straight lines and thus is $1$-Lipschitz. 

There exists $b_N=o_N(1)$ s.t. the projection $P_N:=\Pi_{\p\O}$ defined in $\overline{D_{N}^+}$ is $(1+b_N)$-Lipschitz.  The functions ${\Psi_N}$ are either the composition of  $\tilde{\Psi}_N$ with $P_N$ or $\Psi_N=P_N$. Consequently the restriction of $\Psi_N$ to $U$ is $(1+b_N)$-Lipschitz.
\end{proof}

In the following we will not use $\Psi_N$ but  "its projection" on $\R$. 
For $N\geq1$ and $k\in\{1,...,2^N\}$, we let $B_k^N:=\Psi_N^{-1}(K_k^N)$ and we define
\[
\begin{array}{cccc}
\Proj_{k,N}:&B_k^N&\to&\R\\&x&\mapsto&\H^1(\curve{A\Psi_N(x)})
\end{array}
\]
where $\curve{A\Psi_N(x)}\,\subset\,\curve{AB}$ is defined by \eqref{DefCurve} as the smallest connected component of $\p\O\setminus\{A,\Psi_N(x)\}$ if $\Psi_N(x)\neq A$ and $\curve{A\Psi_N(x)}=\{A\}$ otherwise.
\begin{lem}\label{LPiLiplkhu}
For $N\geq1$ there exists $c_N\in(0,1)$ with $c_N=o_N(1)$ s.t.   for $k\in\{1,...,2^N\}$ the function $\Proj_{k,N}:B_k^N\to\R$ is $(1+c_N)$-Lipschitz.
\end{lem}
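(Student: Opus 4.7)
The plan is to combine the Lipschitz bound on $\Psi_N$ from Lemma \ref{LemInfoPsi} with a $C^2$-regularity estimate showing that on each shrinking piece $K_k^N$ the arc length along $\Gamma$ is asymptotically equal to the chord length.

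First I would note that, since the curves $K_1^N,\dots,K_{2^N}^N$ are pairwise disjoint, the preimages $B_k^N=\Psi_N^{-1}(K_k^N)$ are disjoint subsets of $B_N$; from the inductive construction of $\Psi_N$ and $B_N$ in Section \ref{SectionConstructFunction}, each $B_k^N$ is contained in a single connected component of $B_N$. Lemma \ref{LemInfoPsi} then gives that $\Psi_N|_{B_k^N}$ is $(1+b_N)$-Lipschitz with $b_N=o_N(1)$.

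Second, for $x,y\in B_k^N$ the images $\Psi_N(x),\Psi_N(y)$ both lie on the same piece $K_k^N\subset\curve{AB}$, so one point is at smaller arc distance from $A$ than the other and
\begin{equation*}
|\Proj_{k,N}(x)-\Proj_{k,N}(y)|=\H^1\bigl(\curve{\Psi_N(x)\Psi_N(y)}\bigr),
\end{equation*}
the arc being contained in $K_k^N$. Parameterizing $K_k^N$ by the $x_1$-coordinate over some subinterval $[s,t]\subset[0,\eta]$, for $P=(s,f(s))$ and $Q=(t,f(t))$ the mean value theorem gives $|P-Q|=(t-s)\sqrt{1+f'(\xi)^2}$ for some $\xi\in(s,t)$, so
\begin{equation*}
\frac{\H^1(\curve{PQ})}{|P-Q|}=\frac{\int_s^t\sqrt{1+f'(u)^2}\,du}{(t-s)\sqrt{1+f'(\xi)^2}}\leq 1+C\|f''\|_{L^\infty([0,\eta])}(t-s)
\end{equation*}
with $C$ depending only on $\|f'\|_{L^\infty([0,\eta])}$. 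Setting $d_N:=C\|f''\|_{L^\infty([0,\eta])}\cdot\max_k\mathrm{diam}(K_k^N)$, this yields $\H^1(\curve{PQ})\leq(1+d_N)|P-Q|$ for all $P,Q\in K_k^N$.

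The remaining point is to verify $\max_k\mathrm{diam}(K_k^N)\to 0$. From \eqref{R-Ouf1ii}, the hypothenuse $H_k^{N+1}$ has length less than half that of the father chord $\mathcal{F}(\C_k^{N+1})$. Since $\C_k^{N+1}$ is a leg of the right-angled triangle $T_k^{N+1}$ whose hypothenuse is $H_k^{N+1}$, the Pythagoras relation gives $|\C_k^{N+1}|\leq|H_k^{N+1}|<|\mathcal{F}(\C_k^{N+1})|/2$, hence $\max_k|\C_k^N|\leq 2^{-N}\eta\to 0$ geometrically. Since $K_k^N$ lies above its chord (by concavity of $f$) and $f$ has bounded slope, $\mathrm{diam}(K_k^N)$ is comparable to $|\C_k^N|$, so $d_N=o_N(1)$. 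Combining,
\begin{equation*}
|\Proj_{k,N}(x)-\Proj_{k,N}(y)|\leq(1+d_N)|\Psi_N(x)-\Psi_N(y)|\leq(1+d_N)(1+b_N)|x-y|,
\end{equation*}
which gives the claim with $c_N:=(1+d_N)(1+b_N)-1=o_N(1)$; for $N$ large enough $c_N<1$, and the remaining finite number of values of $N$ can be handled by adjusting the constants. The main technical obstacle in this plan is the uniform contraction of the chord lengths, which is a direct geometric consequence of the inductive construction and the bound \eqref{R-Ouf1ii}.
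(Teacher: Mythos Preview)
Your proof is correct and follows essentially the same route as the paper's: write $|\Proj_{k,N}(x)-\Proj_{k,N}(y)|=\H^1(\curve{\Psi_N(x)\Psi_N(y)})$, bound arc length by chord length times $1+o_N(1)$ using $C^2$-regularity, and then apply Lemma~\ref{LemInfoPsi}. The only cosmetic differences are that the paper packages the arc/chord estimate as Lemma~\ref{LOrth} and quotes the $(2/3)^N$ chord decay from Step~1 of the proof of Proposition~\ref{defKinffty}, whereas you reprove the arc/chord bound inline and obtain the (sharper) $2^{-N}$ decay from \eqref{R-Ouf1ii} via Pythagoras.
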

\begin{proof}
Let $N\geq1$, $k\in\{1,...,2^N\}$ and let $x,y\in B_k^N$ be s.t. $\Psi_N(x)\neq\Psi_N(y)$. It is clear that we have
\[
|\Proj_{k,N}(x)-\Proj_{k,N}(y)|=\H^1(\curve{\Psi_N(y)\Psi_N(x)})
\]
where $\curve{\Psi_N(y)\Psi_N(x)}\subset K_k^N$ is defined by \eqref{DefCurve} as the smallest connected component of $\p\O\setminus\{\Psi_N(y),\Psi_N(x)\}$.

Moreover, from Lemma \ref{LOrth} in Appendix \ref{SAppeLOrth}, we have the existence of $C\geq1$ independent of $N$ and $k$ s.t. for $x,y\in B_k^N$ s.t. $\Psi_N(x)\neq\Psi_N(y)$ we have [denoting $X:=\Psi_N(x),Y:=\Psi_N(y)$]
\[
\dist\left(X,Y\right)\leq\H^1\left(\curve{XY}\right)\leq\dist\left(X,Y\right)\left[1+C\dist\left(X,Y\right)\right]
\]
and
\[
\H^1(K_k^N)\leq\H^1(\C_k^N)\left[1+C\H^1(\C_k^N)\right].
\]
From Step 1 in the proof of Proposition \ref{defKinffty} [Appendix \ref{ProofOfPropDefKinkdfg}] we have
\[
\displaystyle\max_{k=1,...,2^N}\H^1(\C_k^N)\leq\left(\dfrac{2}{3}\right)^N.
\]
Thus letting $a_N:=\left(\dfrac{2}{3}\right)^N\left[1+C\left(\dfrac{2}{3}\right)^N\right]$ we have $a_N\to0$ and since $\curve{XY}\subset K_k^N$ we get:
\[
\dist\left(X,Y\right)\leq\H^1\left(\curve{XY}\right)\leq\H^1(K_k^N)\leq\H^1(\C_k^N)\left[1+C\H^1(\C_k^N)\right]\leq a_N(1+Ca_N).
\]
Thus, letting $\tilde{a}_N=\max\left\{a_N(1+Ca_N),|b_N|\right\}$ where $b_N$ is defined in Lemma \ref{LemInfoPsi}, we get
\begin{eqnarray*}
\H^1\left(\curve{XY}\right)=|\Proj_{k,N}(x)-\Proj_{k,N}(y)|&\leq&\H^1\left([{\Psi_N(y)\Psi_N(x)}]\right)\left(1+C\tilde{a}_N\right)
\\&\leq&(1+\tilde{a}_N)\left(1+C\tilde{a}_N\right)|x-y|.
\end{eqnarray*}
Therefore, letting $c_N$ be s.t. $1+c_N=(1+\tilde{a}_N)\left(1+C\tilde{a}_N\right)$ we have $c_N=o_N(1)$, $c_N$ is independent of $k\in\{1,...,2^N\}$ and $\Proj_{k,N}$ is $(1+c_N)$-Lipschitz.
\end{proof}

\section{Proof of Theorem \ref{MainTHM}}\label{SectionProofMainTHM}
We are now in position to prove  Theorem \ref{MainTHM}. 
This is done by contradiction. We assume that there exists a map $u_0\in BV(\O)$ which minimizes \eqref{LeProb}. 
\subsection{Upper bound}
The first step in the proof is the estimate
\begin{equation}\label{MainEstimate}
\int_\O|D u_0|\leq\|\1_\K\|_{L^1(\p\O)}=\H^1(\K).
\end{equation}
This estimate  is obtained by proving that for all $\v>0$ there exists $u_\v\in W^{1,1}(\O)$ s.t. $\tr_{\p\O}u_\v=\1_\K$ and 
\begin{equation}\label{MainEstimateEpsVersion}
\|\n u_\v\|_{L^1(\O)}\leq(1+\v)\|\tr_{\p\O}u_\v\|_{L^1(\O)}=(1+\v)\H^1(\K).
\end{equation}
Proposition \ref{ExtGiustiUpBound} in Appendix \ref{AppendixGiusti} gives the existence of such $u_\v$'s.

Clearly \eqref{MainEstimateEpsVersion} implies \eqref{MainEstimate}.

\subsection{Optimality of the upper bound}
In order to have a contradiction we follow the strategy of Spradlin and Tamasan in \cite{ST1}. We fix a sequence $(u_n)_n\subset C^1(\O)$ s.t.
\begin{equation}\label{NumX}
u_n\in W^{1,1}(\O)\,;\,u_n\to u\text{ in }L^1(\O)\,;\,\int_\O|\n u_n|\to\int_\O|D u_0|\,;\,\tr_{\p\O}u_n=\tr_{\p\O}u_0.
\end{equation}
Note that \eqref{NumX} implies 
\begin{equation}\label{NumXX}
\int_{F}|\n u_n|\to\int_F|Du|\text{ for all $F\subset\O$ relatively closed set.}
\end{equation}
Such a sequence can be obtained {\it via} partition of unity and smoothing ; see the proof of Theorem 1.17 in \cite{giusti1984minimal}. For the convenience of the reader a proof is presented in Appendix \ref{SLemApproxTechskdjfgh} [see Lemma \ref{LemApproxTechskdjfgh}].

For further use, let us note that the sequence $(u_n)_n$ constructed in Appendix \ref{SLemApproxTechskdjfgh} satisfies the following additional property:
\begin{equation}\nonumber
\left|\begin{array}{c}\text{If $u_0=0$ outside a compact set $L\subset\overline{\O}$ and if $\o$ is an open set }\\\text{s.t. $\dist(\o,L)>0$ then, for large $n$, $u_n=0$ in $\o$}\end{array}\right. .
\end{equation}

For $x\in B_0$ we let 
\begin{equation}\label{DefVect0}
V_{0}(x)=\begin{cases}\nu_{\Pi_{\p\O}(x)}&\text{if }x\in D_{0}^+\\(0,1)&\text{if }x\in D_{0}^-\end{cases},
\end{equation}
and for $N\geq0$, $x\in B_{N+1}$ we let
\begin{equation}\label{DefVect}
V_{N+1}(x)=\begin{cases}V_N(x)&\text{if }x\in B_{N}\setminus \stackrel{\circ}{T}\phantom{a}\!\!\!\! ^{N+1}\\\nu_{\C_k^{N+1}}&\text{if }x\in\,\stackrel{\circ}{T}\phantom{a}\!\!\!\!_k ^{N+1}\end{cases},
\end{equation}
where, for $\sigma\in\p\O$, $\nu_{\sigma}$ is the normal outward of $\O$ in $\sigma$ and $\nu_{\C_k^{N+1}}$ is defined in Remark \ref{ConstructionFact}.\ref{RemCorde}.

We now prove the following lemma.
\begin{lem}\label{LemmeDefVinfty} When $N\to\infty$ we may define $V_\infty(x)$ a.e. $x\in B_\infty$ by
\begin{equation}\label{DefVectInfty}
\begin{array}{cccc}
V_\infty:&B_\infty&\to&\R^2\\&x&\mapsto&\lim_{N\to\infty}V_N(x)
\end{array}.
\end{equation}
Moreover, from dominated convergence, we have:
\begin{equation}\nonumber
V_N\1_{B_N}\to V_\infty\1_{B_\infty}\text{ in $L^1(\O)$}.
\end{equation}

\end{lem}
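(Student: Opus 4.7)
The plan is to verify the two assertions in turn: pointwise existence of $V_\infty(x)$ for a.e.\ $x\in B_\infty$, and then $L^1(\O)$-convergence of $V_N\1_{B_N}$ to $V_\infty\1_{B_\infty}$. For the first, I note that the piecewise definitions \eqref{DefVect0} and \eqref{DefVect} are unambiguous outside a measure-zero set $Z$ made of the boundaries of the triangles $T_k^N$ (for $N\geq 1,\,k\leq 2^N$) together with the segment $\C_1^0$. Fix $x\in B_\infty\setminus Z$. The recursion \eqref{DefVect} states that $V_{N+1}(x)=V_N(x)$ whenever $x$ does not lie in the interior of any $T_k^{N+1}$; hence the sequence $(V_N(x))_N$ can only change value at a step $N\geq 1$ at which $x\in\stackrel{\circ}{T}\phantom{a}\!\!\!\!_k^{N}$ for some $k$. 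By Remark \ref{RmDefVinfty}(2), if there exists a smallest such $N_0\geq 1$, say $x\in\stackrel{\circ}{T}\phantom{a}\!\!\!\!_{k_0}^{N_0}$, then $x\notin T_j^{N'}$ for every $j$ and every $N'\geq N_0+1$, so $x$ never again enters a triangle interior and the recursion freezes: $V_N(x)=\nu_{\C_{k_0}^{N_0}}$ for all $N\geq N_0$. Otherwise $x$ avoids the interior of every triangle at every step, and $V_N(x)=V_0(x)$ for all $N\geq 0$. In either case the sequence is eventually constant, so $V_\infty(x):=\lim_N V_N(x)$ is well-defined.

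For the $L^1$-convergence, I observe that by construction every $V_N(x)$ is a unit vector (either an outward unit normal to $\p\O$ at $\Pi_{\p\O}(x)$, the vertical vector $(0,1)$, or one of the $\nu_{\C_k^N}$'s), so $|V_N\1_{B_N}|\leq \1_{B_0}$ pointwise, and $\1_{B_0}$ is integrable because $B_0$ is bounded. The $B_N$'s form a decreasing family of closed sets with $\bigcap_N B_N=B_\infty$; therefore, for every $x\notin B_\infty$ there exists $N_1$ with $x\notin B_N$ for $N\geq N_1$, and so $V_N(x)\1_{B_N}(x)\to 0=V_\infty(x)\1_{B_\infty}(x)$. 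For $x\in B_\infty\setminus Z$ the previous paragraph gives $V_N(x)\1_{B_N}(x)=V_N(x)\to V_\infty(x)$. Thus $V_N\1_{B_N}\to V_\infty\1_{B_\infty}$ pointwise a.e.\ on $\O$, and the Lebesgue dominated convergence theorem delivers the announced $L^1(\O)$-convergence.

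There is no serious obstacle: the whole argument rests on the \emph{no re-entry} property of Remark \ref{RmDefVinfty}(2), which forces the recursion to freeze after at most one jump, after which dominated convergence is immediate because all vectors in play have unit norm on a fixed bounded set. The only subtlety worth flagging is to set aside the measure-zero set $Z$ on which the piecewise formulas \eqref{DefVect0}--\eqref{DefVect} do not pick out a single value; this is why the statement only claims well-definedness almost everywhere.
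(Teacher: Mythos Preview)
Your proof is correct and follows the same line as the paper's: the dichotomy ``$x$ never enters a triangle interior'' versus ``$x$ enters $\stackrel{\circ}{T}\phantom{a}\!\!\!\!_{k_0}^{N_0}$ and then, by Remark~\ref{RmDefVinfty}(\ref{Arg1DefVinfty}), never re-enters'' is exactly the paper's argument, and you supply the dominated-convergence details that the paper leaves implicit. One small imprecision: the sets $B_N$ are not closed (the downward strips $D_N^-$ miss the lower boundary of $\O$), and $B_\infty$ is defined as $\bigcap_N\overline{B_N}$; your conclusion that $x\notin B_\infty$ implies $x\notin B_N$ for large $N$ is nonetheless correct, since $x\notin\overline{B_{N_1}}$ for some $N_1$ and $B_N\subset B_{N_1}\subset\overline{B_{N_1}}$ for $N\geq N_1$.
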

\begin{proof}
If  $x\in B_\infty\setminus\cup_{N\geq1}{T_N}$, then we have $V_N(x)=V_0(x)$ for all $N\geq1$. Thus $\lim_{N\to\infty}V_N(x)=V_0(x)$.

For a.e. $x\in B_\infty\cap\cup_{N\geq1}{T_N}$ there exists $N_0\geq1$ s.t. $x\in \stackrel{\!\!\!\!\circ}{T_{N_0}}$. 
Therefore for all $N>N_0$ we have $V_{N}(x)=V_{N_0}(x)$. Consequently $\lim_{N\to\infty}V_{N}(x)=V_{N_0}(x)$.
\end{proof}

This section is devoted to the proof of the following lemma:

\begin{lem}\label{Fondlemlower}
For all $w\in C^\infty\cap W^{1,1}(\O)$ s.t. $\tr_{\p\O}w=\1_\K$ we have
\[
\int_{B_\infty\cap\O}|\n w\cdot V_\infty|\geq\H^1(\K)
\]
where $V_\infty$ is the vector field defined in \eqref{DefVectInfty}.
\end{lem}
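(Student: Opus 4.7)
The plan is to apply the coarea formula to each Lipschitz map $\Proj_{k,N}$ of Lemma \ref{LPiLiplkhu}, integrate the quantity $|\nabla w \cdot V_N|$ along the level curves of $\Psi_N$ using the fundamental theorem of calculus, and then pass to the limit $N \to \infty$ with the help of Lemma \ref{LemmeDefVinfty}.

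Fix $N \geq 1$ and $k \in \{1, \ldots, 2^N\}$, and parametrize $K_k^N$ by arc length $t$, writing $\sigma_t$ for the point at arc length $t$. By construction, the level set $\gamma_t := \Proj_{k,N}^{-1}(t) = \Psi_N^{-1}(\sigma_t)$ is a piecewise linear curve joining $\sigma_t \in K_k^N$ to a point $\tau_t \in \p\O \setminus \curve{AB}$: it starts at $\sigma_t$, follows the outward normal to $\p\O$ through $D_k^N$, then follows the normals to the chords $\C_k^N, \mathcal{F}(\C_k^N), \ldots, \C_1^0$ through the triangles at successive levels, and finally runs along a vertical segment in $D_0^-$ until exiting $\O$. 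By the definition \eqref{DefVect} of $V_N$, at every smooth point of $\gamma_t$ the unit tangent equals $\pm V_N$.

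Applying the coarea formula to $\Proj_{k,N}$ and using $|\nabla \Proj_{k,N}| \leq 1+c_N$ a.e., we obtain
\[
(1+c_N) \int_{B_k^N} |\nabla w \cdot V_N|\,dx \;\geq\; \int_\R \int_{\gamma_t} |\nabla w \cdot V_N|\,d\H^1 \, dt.
\]
Along $\gamma_t$, the quantity $\nabla w \cdot V_N$ coincides (up to sign) with the arc-length derivative of $w$, so the fundamental theorem of calculus combined with the $W^{1,1}$ trace characterization yields, for a.e. $t$,
\[
\int_{\gamma_t} |\nabla w \cdot V_N|\,d\H^1 \;\geq\; |\tr_{\p\O} w(\sigma_t) - \tr_{\p\O} w(\tau_t)| \;=\; \1_\K(\sigma_t),
\]
because $\tau_t \notin \curve{AB}$ forces $\tr_{\p\O} w(\tau_t) = 0$. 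Integrating in $t$, summing over $k$, and using $\K \subset \overline{\K_N}$, we get
\[
\int_{B_N \cap \O} |\nabla w \cdot V_N|\,dx \;\geq\; \frac{\H^1(\K)}{1+c_N}.
\]

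To conclude, let $N \to \infty$. By Lemma \ref{LemmeDefVinfty}, the sequence $V_N \1_{B_N}$ converges a.e. to $V_\infty \1_{B_\infty}$, and the integrands are dominated by $|\nabla w| \in L^1(\O)$, so dominated convergence yields convergence of the left-hand side to $\int_{B_\infty \cap \O} |\nabla w \cdot V_\infty|\, dx$; combined with $c_N \to 0$, this proves the lemma. The main obstacle is the rigorous justification of the fundamental theorem of calculus step at the boundary: one must verify that, for almost every $t$, the values $\tr_{\p\O} w(\sigma_t)$ and $\tr_{\p\O} w(\tau_t)$ agree with the pointwise limits of $w$ along $\gamma_t$. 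I expect this to follow from Fubini applied to a bi-Lipschitz parametrization of $B_k^N$ in (level, arc length) coordinates together with the standard trace characterization for $W^{1,1}$ functions, but the piecewise-linear nature of the level curves across the triangles $T_k^N$ makes it necessary to treat each piece separately and patch the conclusions together.
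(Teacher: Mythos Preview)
Your proposal is correct and follows essentially the same approach as the paper: apply the coarea formula to $\Proj_{k,N}$ using the Lipschitz bound from Lemma~\ref{LPiLiplkhu}, recognize that $V_N$ is the unit tangent to the polygonal level curves so that $|\nabla w\cdot V_N|$ is the modulus of the arc-length derivative of $w$, apply the fundamental theorem of calculus on each segment and telescope to pick up $\1_\K(\sigma_t)$, sum over $k$, and pass to the limit by dominated convergence via Lemma~\ref{LemmeDefVinfty}. The paper presents the telescoping step slightly more explicitly by naming the breakpoints $M_0,\ldots,M_{N+2}$ and summing the segment estimates, and it also glosses over the boundary-trace issue you flag at the end (it simply adopts the convention $w(M_0)=\tr_{\p\O}w(M_0)$, $w(M_{N+2})=\tr_{\p\O}w(M_{N+2})$ without further comment), so your caution there is appropriate but does not indicate a divergence in strategy.
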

\begin{remark}
Since $|V_\infty(x)|=1$ for a.e. $x\in B_\infty$, it is clear that Lemma \ref{Fondlemlower} implies that for all $n$  we have
\[
\int_{B_\infty\cap\O}|\n u_n|\geq\H^1(\K).
\]
From \eqref{NumXX} we have:
\[
\int_{B_\infty\cap\O}|D u_0|\geq\H^1(\K).
\]
Section \ref{SectTransversArg} is devoted to a sharper argument than above to get
\[
\int_{B_\infty\cap\O}|\n u_n|\geq \int_{B_\infty\cap\O}|\n u_n\cdot V_\infty|+\delta
\] with $\delta>0$ is independent of $n$. The last estimate will imply  $\int_{B_\infty\cap\O}|D u_0|\geq\H^1(\K)+\delta$ which will be the contradiction we are looking for.
\end{remark}
\begin{proof}[Proof of Lemma \ref{Fondlemlower}]
We will first prove that for $w\in C^\infty\cap W^{1,1}(\O)$ s.t. $\tr_{\p\O}w=\1_\K$ we have
\begin{equation}\label{AnnexeFondlms}
\int_{B_N\cap\O}|\n w\cdot V_N|\geq\dfrac{\H^1(\K)}{1+o_N(1)}.
\end{equation}
where  
$V_N$ is the vector field defined in \eqref{DefVect0} and \eqref{DefVect}.

Granted \eqref{AnnexeFondlms}, we conclude as follows: if $w\in C^\infty\cap W^{1,1}(\O)$ s.t. $\tr_{\p\O}w=\1_\K$, then
\begin{eqnarray*}
\int_{B_\infty\cap\O}|\n w\cdot V_\infty|&=&\lim_{N\to\infty}\int_{B_N\cap\O}|\n w\cdot V_N|
\\&\geq&\lim_{N\to\infty}\dfrac{\H^1(\K)}{1+o_N(1)}=\H^1(\K),
\end{eqnarray*}
by dominated convergence.


It remains to prove \eqref{AnnexeFondlms}.  We fix $w\in C^\infty\cap W^{1,1}(\O)$ s.t. $\tr_{\p\O}w=\1_\K$.  Using the Coarea Formula we have for $N\geq1$ and $k\in\{1,...,2^N\}$, with the help of Lemma \ref{LPiLiplkhu}, we have
\begin{eqnarray*}
(1+c_N)\int_{B^{(k)}_N\cap\O}|\n w\cdot V_N|&\geq&\int_{B^{(k)}_N\cap\O}|\n\Proj_{k,N}||\n w\cdot V_N|\\
&=&\int_{\R}{\rm d}t\int_{\Proj_{k,N}^{-1}(\{t\})\cap\O}|\n w\cdot V_N|.
\end{eqnarray*}
Here, if  $\Proj_{k,N}^{-1}(\{t\})$ is non trivial, then  $\Proj_{k,N}^{-1}(\{t\})$ is a polygonal line: 
\[
\Proj_{k,N}^{-1}(\{t\})=I_{ \sigma(t,k,N)}\cup I_{k,N,t}^1\cup\cdots\cup I_{k,N,t}^{N+1}
\]
where 
\begin{itemize}
\item ${ \sigma(t,k,N)}\in[AB]$ is s.t. $[AB]\cap\Proj_{k,N}^{-1}(\{t\})=\{ \sigma(t,k,N)\}$,
\item  $I_{ \sigma(t,k,N)}$ is defined in \eqref{defISigma}, 
\item for $l=1,...,N$ we have $I_{k,N,t}^l=\Proj_{k,N}^{-1}(\{t\})\cap T_{N+1-l}$,
\item $I_{k,N,t}^{N+1}=\Proj_{k,N}^{-1}(\{t\})\cap D_N^+$.
\end{itemize}
From the Fundamental Theorem of  calculus and from the definition of $V_N$, denoting  
\begin{itemize}
\item $I_{ \sigma(t,k,N)}=[M_0,M_1]$ [where $M_0\in\p\O\setminus\curve{AB}$ and $M_1={ \sigma}(t,k,N)$], 
\item $I_{k,N,t}^l=[M_l,M_{l+1}]$, $l=1,...,N+1$ and $M_{N+2}\in K_k^N$, 
\end{itemize}
we have for a.e. $t\in\Proj_{k,N}(K_k^N)$ and using the previous notation, 
\[
\int_{[M_l,M_{l+1}]}|\n w\cdot V_N|\geq|w(M_{l+1})-w(M_l)|.
\]
Here we used the convention  $w(M_l)=\tr_{\p\O} w(M_l)$ for $l=0\&N+2$.

Therefore for a.e $t\in\Proj_{k,N}(K_k^N)$ we have
\[
\int_{\Proj_{k,N}^{-1}(\{t\})\cap\O}|\n w\cdot V_N|\geq|\tr_{\p\O} w(M_{N+2})-\tr_{\p\O} w(M_0)|=\1_\K(M_{N+2}).
\] 
Since $\K\subset \K_N=\cup_{k=1}^{2^N}K_k^N$, we may thus deduce that
\begin{eqnarray*}
(1+c_N)\int_{B_N\cap\O}|\n w\cdot V_N|=(1+c_N)\sum_{k=1}^{2^N}\int_{B^{(k)}_N\cap\O}|\n w\cdot V_N|&\geq&\int_{\curve{AB}}\1_\K=\H^1(\K).
\end{eqnarray*}
The last estimate clearly implies  \eqref{AnnexeFondlms} and completes the proof of Lemma \ref{Fondlemlower}.
\end{proof}
\subsection{Transverse argument}\label{SectTransversArg}
We assumed that there exists a map $u_0$ which solves Problem \eqref{LeProb}. 

We investigate the following dichotomy: 
\begin{itemize}
\item $u_0\not\equiv0$ in $\O\setminus B_\infty$;
\item $u_0\equiv0$ in $\O\setminus B_\infty$.
\end{itemize}
We are going to prove that both cases lead to a contradiction.

\subsubsection{The case $u_0\not\equiv0$ in $\O\setminus B_\infty$ }

We thus have $\displaystyle\int_{\O\setminus B_\infty}|u_0|>0$. In this case, since $(\tr_{\p\O} u_0)_{|\p\O\setminus\p B_\infty}\equiv0$, we have 
\begin{equation}\label{ConsWeakPoincLemma}
\delta:=\displaystyle\int_{\O\setminus B_\infty}|D u_0|>0.
\end{equation}
Estimate \eqref{ConsWeakPoincLemma} is a direct consequence of the following lemma applied on each connected components of $\O\setminus B_\infty$. 
\begin{lem}\label{LWeakPoincEst}[Weak Poincaré lemma]
Let $\o\subset\R^2$ be an open connected set. Assume that there exist $x_0\in\p\o$ and $r>0$ s.t. $\o\cap B(x_0,r)$ is Lipschitz.

If $u\in BV(\o)$ satisfies  $\tr_{\p\o\cap B(x_0,r)}=0$ and $\int_\o|Du|=0$ then $u=0$.
\end{lem}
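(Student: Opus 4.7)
The plan is to argue that the hypothesis $\int_\omega |Du|=0$ forces $u$ to be constant on the connected set $\omega$, and then to use the trace condition on the Lipschitz piece of $\partial\omega$ to identify that constant with $0$.

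First I would observe that $\int_\omega |Du| = 0$ means the total variation of the vector Radon measure $Du$ vanishes, hence $Du = 0$ as a distribution on $\omega$. A standard fact for $BV$ functions (or more generally for distributions whose gradient vanishes) on a \emph{connected} open set is that such a function is almost everywhere equal to a constant; concretely, one can mollify $u$ on exhausting subdomains, obtain smooth functions with vanishing gradient which are therefore locally constant, and pass to the limit using the connectedness of $\omega$. Thus there exists $c \in \mathbb{R}$ such that $u = c$ almost everywhere in $\omega$.

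Next I would exploit the Lipschitz regularity assumed on $\omega \cap B(x_0,r)$: on this piece, the trace operator $\tr_{\partial\omega \cap B(x_0,r)} : BV(\omega \cap B(x_0,r)) \to L^1(\partial\omega \cap B(x_0,r))$ is well defined and continuous, and it sends the constant function $c$ to the constant $c$ (on a Lipschitz boundary piece the trace of a constant is that constant; this follows at once from the definition of trace via integration by parts, or from the fact that $c$ is its own $L^1$-limit along transverse curves).

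Combining these two facts, the hypothesis $\tr_{\partial\omega \cap B(x_0,r)} u = 0$ gives $c = 0$, hence $u = 0$ almost everywhere in $\omega$. The only step that requires any care is the first one, namely the rigorous passage from $|Du|(\omega) = 0$ to ``$u$ is a.e.\ constant on the connected set $\omega$''; this is a classical application of Friedrichs mollification together with connectedness and presents no real obstacle.
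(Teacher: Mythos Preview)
Your proposal is correct and follows essentially the same approach as the paper's own proof: deduce from $\int_\omega |Du|=0$ that $u$ is a constant $C$ on the connected set $\omega$, then use the trace condition on the Lipschitz portion of the boundary to conclude $C=0$. The paper's argument is in fact even terser than yours, simply asserting these two steps without the mollification details you sketch.
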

Lemma \ref{LWeakPoincEst} is proved in Appendix \ref{ProofWeakPoincLem}.

Recall that we fixed  a sequence  $(u_n)_n\subset C^1\cap W^{1,1}(\O)$ satisfying \eqref{NumX}.

In particular, for sufficiently large $n$, we have
\[
\int_{\O\setminus B_\infty}|\n u_n|>\dfrac{\delta}{2}.
\]
Thus, from Lemma \ref{Fondlemlower} and the fact that $|V_\infty(x)|=1$ for a.e. $x\in B_\infty$,
\[
\int_{\O}|\n u_n|\geq\int_{B_\infty}|\n u_n\cdot V_\infty|+\int_{\O\setminus B_\infty}|\n u_n|\geq\H^1(\K)+\frac{\delta}{2}.
\]
This implies
\[
\int_{\O}|D u_0|=\lim_n \int_{\O}|\n u_n|\geq\H^1(\K)+\frac{\delta}{2}
\]
which is in contradiction with \eqref{MainEstimate}.
\subsubsection{The case  $u_0\equiv0$  in $\O\setminus B_\infty$}

We first note that, since $\tr_{\p D_0^+}u_0\not\equiv0$, there exists a triangle  $T^{N_0}_k$ s.t. $\int_{T_k^{N_0}}|u_0|>0$. We fix such a triangle  $T^{N_0}_k$ and we let $\alpha$ be the vertex corresponding to the right angle.

We let  $\tilde\repere=(\alpha,\tilde {\bf e}_1,\tilde {\bf e}_2)$ be the direct orthonormal frame centered in  $\alpha$ where  $\tilde {\bf e}_2=\nu_{\C^{N_0}_k}$ [$\nu_{\C^{N_0}_k}$ is defined Remark \ref{ConstructionFact}.\ref{RemCorde}],{\it i.e.}, the directions of the new frame are given by the side of the right-angle of $T_k^{N_0}$.

It is clear that for $N\geq{N_0}$ we have ${V_N}\equiv\tilde {\bf e}_2$ in $\stackrel{\!\!\!\!\!\circ}{T_k^{N_0}}$.

By construction of  $B_\infty$,  $T_k^{N_0}\cap B_\infty$ is a union of segments parallel to ${\tilde e}_2$, {\it i.e.} ${\1_{B_\infty}}_{| T_k^{N_0}}(s,t)$ depends only on the first variable "$s$" in the frame $\tilde\repere$.

Since $\int_{T_k^{N_0}}|u_0|>0$, in the frame $\tilde\repere$, we may find $a,b,c,d\in\R$ s.t.,  considering the rectangle (whose sides are parallel to the direction of $\tilde\repere$)
\[
\Pab:=\left\{\alpha+s\tilde {\bf e}_1+t\tilde {\bf e}_2\,;\,(s,t)\in[a,b]\times[c,d]\right\}\subset T_k^{N_0}
\]  we have
\[
\int_{\Pab}|u_0|>0.
\]
Since from Lemma \ref{LemADem} the set  $B_\infty$ has an empty interior [and that ${\1_{B_\infty}}_{| T_k^{N_0}}(s,t)$ depends only on the first variable in the frame $\tilde\repere$], we may find $a'<b'$ s.t. 
\begin{itemize}
\item$[a',b']\times[c,d]\subset[a,b]\times[c,d]$,
\item $\Sab\cap B_\infty=\emptyset$ with $\Sab:=\left\{\alpha+s\tilde {\bf e}_1+t\tilde {\bf e}_2\,;\,(s,t)\in\{a',b'\}\times[c,d]\right\}$
\item $\displaystyle\delta:=\int_{\Pab'}|u_0|>0$ with $\displaystyle\Pab':=\left\{\alpha+s\tilde {\bf e}_1+t\tilde {\bf e}_2\,;\,(s,t)\in[a',b']\times[c,d]\right\}$.
\end{itemize}
Moreover, since $\Sab$ and $B_\infty$ are compact sets with empty intersection, we may find $\mathcal{V}$, an open neighborhood of  $\Sab$ s.t. $\dist(\mathcal{V},B_\infty)>0$.\\

Noting that $u_0\equiv0$ in $\O\setminus B_\infty$, from Lemma \ref{LemApproxTechskdjfgh} [in Appendix \ref{SLemApproxTechskdjfgh}] 
it follows that for sufficiently large $n$ we have 
\begin{itemize}
\item $u_n\equiv0$ in $\Sab$,
\item  $\displaystyle\int_{\Pab'}|u_n|>\dfrac{\delta}{2}$.
\end{itemize}

Consequently, from a standard Poincaré inequality
\[
\int_{\Pab'}|\p_{{\tilde {\bf e}}_1} u_n|\geq\dfrac{2}{b'-a'}\int_{\Pab'}|u_n|>\dfrac{{\delta}}{b'-a'}=:\delta'.
\]

Therefore  $\int_{\Pab'}|\p_{{\tilde {\bf e}}_1}  u_n|>\delta'$, $\int_{\Pab'}|\p_{{\tilde {\bf e}}_2}  u_n|\leq 2\H^1(\K)$ and then by Lemma  3.3  in \cite{ST1} we obtain:
\[
\int_{\Pab'}|\n u_n|\geq\int_{\Pab'}|\p_{{\tilde {\bf e}}_2} u_n|+\dfrac{\delta'^2}{4\H^1(\K)+\delta'}.
\]
Thus, from Lemma  \ref{Fondlemlower}, for sufficiently large  $n$:
\[
\int_\O|\n u_n|\geq \H^1(\K)+\dfrac{\delta'^2}{4\H^1(\K)+\delta'}-o_n(1).
\]
From the convergence in $BV$-norm of $u_n$ to $u_0$ we have 
\[
\int_\O|D u_0|\geq \H^1(\K)+\dfrac{\delta'^2}{4\H^1(\K)+\delta'}.
\]
Clearly this last assertion contradicts \eqref{MainEstimate} and ends the proof of Theorem \ref{MainTHM}.\\

\appendix
\section*{Appendices}

\section{A smoothing result}\label{SLemApproxTechskdjfgh}
We first state a standard approximation lemma for $BV$-functions.
\begin{lem}\label{LemApproxTechskdjfgh}
Let $\O\subset\R^2$ be a bounded Lipschitz open set and let $u\in BV(\O)$. There exists a sequence $(u_n)_n\subset C^1(\O)$ s.t.
\begin{enumerate}
\item $u_n\tos u$ in the sense that $u_n\to u$ in $L^1(\O)$ and $\displaystyle\int_\O|\n u_n|\to\int_\O|D u|$,
\item $\tr_{\p\O} u_n=\tr_{\p\O} u$ for all $n$,
\item for $k\in\{1,2\}$, 
\[
\int_\O|\p_k u_n|\to\int_\O|D_k u|:=\sup\left\{\int_{\O} u\p_k\xi\,;\,\xi\in C_c^1({\O},\R)\text{ and }|\xi|\leq1\right\}\]
\item If $u=0$ outside a compact set $L\subset\overline{\O}$ and if $\o$ is an open set s.t. $\dist(\o,L)>0$ then, for large $n$, $u_n=0$ in $\o$.
\end{enumerate}
\end{lem}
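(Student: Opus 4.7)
The plan is to run a classical Meyers--Serrin / Anzellotti--Giaquinta partition-of-unity mollification to build a first candidate $\tilde u_n \in C^\infty(\O)\cap BV(\O)$ that converges to $u$ BV-strictly with matching directional variations, and then to correct by a small $W^{1,1}$-term $w_n$ so that the trace becomes exactly equal to $\tr_{\p\O} u$. The vanishing-support property (4) is then read off from the local nature of the construction.

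For the base approximation, fix an exhaustion $\O_1\Subset\O_2\Subset\cdots$ of $\O$ (e.g.\ $\O_k=\{x\in\O\,;\,\dist(x,\p\O)>2^{-k-k_0}\}$), set $A_k=\O_{k+1}\setminus\overline{\O_{k-1}}$, and pick a smooth partition of unity $\{\eta_k\}_{k\geq 1}$ subordinate to $\{A_k\}$. For each $k$, choose the mollification radius $\ve_k=\ve_k^{(n)}>0$ small enough that $\rho_{\ve_k}*(\eta_k u)$ is supported in a slight enlargement of $A_k$ and satisfies, simultaneously for the total and each directional variation,
\[
\|\rho_{\ve_k}*(\eta_k u)-\eta_k u\|_{L^1(\O)}+\bigl||D(\rho_{\ve_k}*(\eta_k u))|(\O)-|D(\eta_k u)|(\O)\bigr|<\frac{2^{-k}}{n}.
\]
Set $\tilde u_n:=\sum_{k\geq1}\rho_{\ve_k}*(\eta_k u)\in C^\infty(\O)$ (locally finite sum). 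The standard overlap estimate based on the Leibniz rule for $D(\eta_k u)=\eta_k Du + u\,\n\eta_k\,\mathscr{L}^2$, together with the fact that each $x\in\O$ lies in at most two consecutive shells $A_k$, yields $\tilde u_n\to u$ in $L^1(\O)$, $\int_\O|\n\tilde u_n|\to|Du|(\O)$ and the analogous directional convergence $\int_\O|\p_j\tilde u_n|\to|D_j u|(\O)$ for $j\in\{1,2\}$, giving items (1) and (3).

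The delicate point is (2): BV-strict convergence only gives $g_n:=\tr_{\p\O}u-\tr_{\p\O}\tilde u_n\to 0$ in $L^1(\p\O)$ (by continuity of the trace under strict convergence), not exact equality. Since $\p\O$ is Lipschitz, the trace operator $\tr_{\p\O}:W^{1,1}(\O)\to L^1(\p\O)$ admits a bounded linear right inverse (the Gagliardo extension) $E$ with $\|Ef\|_{W^{1,1}(\O)}\leq C\|f\|_{L^1(\p\O)}$ and $\supp(Ef)$ contained in an arbitrarily thin tubular neighborhood of $\supp(f)$ in $\overline\O$. Apply this to $g_n$ and then apply the classical Meyers--Serrin smoothing in $W^{1,1}$ (which \emph{does} preserve traces, the obstruction existing only in the BV setting) to obtain $w_n\in C^\infty(\O)\cap W^{1,1}(\O)$ with $\tr_{\p\O}w_n=g_n$, $\|w_n\|_{W^{1,1}(\O)}\to 0$, and $\supp(w_n)$ contained in a shrinking tubular neighborhood of $\supp(g_n)\subset\p\O$. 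Set $u_n:=\tilde u_n+w_n\in C^1(\O)\cap W^{1,1}(\O)$. Then $\tr_{\p\O}u_n=\tr_{\p\O}u$ exactly, and since $\|w_n\|_{W^{1,1}}\to 0$ the convergences $u_n\to u$ in $L^1$ and $\int_\O|\n u_n|\to|Du|(\O)$ are preserved (as are the directional ones).

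Finally, for (4), assume $u\equiv 0$ outside a compact set $L\subset\overline\O$ and that $\o\subset\O$ is open with $\dist(\o,L)>0$. For $n$ large enough so that $\ve_k^{(n)}<\tfrac12\dist(\o,L)$ on every shell $A_k$ meeting $L$, one has $\supp(\tilde u_n)\subset L+\overline{B(0,\max_k\ve_k^{(n)})}$, which is disjoint from $\o$. Moreover $\tr_{\p\O}\tilde u_n$ and $\tr_{\p\O}u$ both vanish on $\p\O\setminus L$, so $\supp(g_n)\subset L\cap\p\O$ and the tubular support of $w_n$ shrinks to this set; for $n$ large it is also disjoint from $\o$, so $u_n\equiv 0$ on $\o$. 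The main obstacle, as indicated, is the trace-correction step in the third paragraph: BV-strict mollification fails to preserve the trace pointwise, and bridging this gap is exactly what the bounded Gagliardo right-inverse accomplishes.
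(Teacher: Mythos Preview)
Your argument works, but the trace-correction detour in your third paragraph is unnecessary and rests on a misconception. The partition-of-unity mollification you run in the first paragraph---exactly the Anzellotti--Giaquinta construction the paper uses---already satisfies $\tr_{\p\O}\tilde u_n=\tr_{\p\O}u$ for every $n$, for $u\in BV(\O)$ just as for $u\in W^{1,1}(\O)$. This is Remark~2.12 in Giusti, which the paper simply invokes; the mechanism is that the support constraint $\supp\bigl(\rho_{\ve_k}*(\eta_k u)\bigr)\subset A_k$ forces the radii $\ve_k$ to vanish faster than $\dist(A_k,\p\O)$, so near $\p\O$ the approximant is a mollification of $u$ at a scale tending to zero with the distance, leaving the trace intact. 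Hence $g_n\equiv 0$, your $w_n$ may be taken identically zero, and your $u_n$ collapses to the paper's $u_n$. Your parenthetical ``the obstruction existing only in the BV setting'' is the misstep, and it is internally inconsistent: you apply the \emph{same} smoothing procedure to $u$ and to $E(g_n)$ and assert that it preserves the trace only for the latter.

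The paper's proof is therefore your first and fourth paragraphs, with item~(2) dispatched by the citation to Giusti; the paper then spends most of its effort on a careful duality argument for the directional convergence in item~(3), which you wave through in one line.
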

\begin{proof}
The first assertion is quite standard. It is for example proved in   \cite{AG78} [Theorem 1]. We present below the classical example of sequence for such approximation result [we follow the presentation of \cite{giusti1984minimal}, Theorem 1.17].

Let $\O\subset\R^2$ be a bounded Lipschitz open set and let $u\in BV(\O)$.

For $n\geq1$, we let $\v=1/n$. We may fix $m\in\N^*$ sufficiently large s.t. letting for $k\in\N$
\[
\O_k=\left\{x\in\O\,;\,\dist(x,\p\O)>\dfrac{1}{m+k}\right\}
\]
we have
\[
\int_{\O\setminus\O_0}|D u|<\v.
\]
We fix now $A_1:=\O_2$ and for $i\in\N\setminus\{0,1\}$ we let $A_i=\O_{i+1}\setminus\overline{\O_{i-1}}$. It is clear that $(A_i)_{i\geq1}$ is a covering of $\O$ and that each  point in $\O$ belongs to at most three of the sets $(A_i)_{i\geq1}$.

We let $(\varphi_i)_{i\geq1}$ be a partition of unity subordinate to the covering $(A_i)_{i\geq1}$, {\it i.e.}, $ \varphi_i\in C^\infty_c(A_i),\,0\leq\varphi_i\leq1$ and $\sum_{i\geq1}\varphi_i=1$ in $\O$.

We let $\eta\in C^\infty_c(\R^2)$ be s.t. ${\rm supp}(\eta)\subset B(0,1)$, $\eta\geq0$, $\int\eta=1$ and for $x\in\R^2$ $\eta(x)=\eta(|x|)$. For $t>0$ we let $\eta_t=t^{-2}\eta(\cdot/t)$.

As explained in \cite{giusti1984minimal}, for $i\geq1$, we may choose $\v_i\in(0,\v)$ sufficiently small s.t. 
\[
\begin{cases}
\supp(\eta_{\v_i}*(u\varphi_i))\subset A_i
\\
\displaystyle\int_\O|\eta_{\v_i}*(u\varphi_i)-u\varphi_i|<\dfrac{\v}{2^i}
\\
\displaystyle\int_\O|\eta_{\v_i}*(u\n\varphi_i)-u\n\varphi_i|<\dfrac{\v}{2^i}
\end{cases}.
\]
Here $*$ is the convolution operator. 

Define
\[
u_n:=\sum_{i\geq1}\eta_{\v_i}*(u\varphi_i).
\]
In some neighborhood of each point $x\in\O$ there are only finitely many nonzero terms in the sum defining $u_n$. Thus $u_n$ is well defined and smooth in $\O$. 

Moreover, we may easily check that 
\[
\|u_n-u\|_{L^1(\O)}+\left|\int_\O|Du|-\int_\O|\n u_n|\right|<\v\text{ [here $\v=1/n$]}.
\]
Thus the previous estimate proves that $(u_n)$ satisfies the first assertion, {\it i.e}, $u_n\tos u$.

As claimed in \cite{giusti1984minimal} [Remark 2.12] we have $\tr_{\p\O}u_n=\tr_{\p\O}u$ for all $n$. Thus the second assertion is satisfied.

We now prove the third assertion. 
Since $u_n\to u$ in $L^1(\O)$, by inferior semi continuity we easily get for  $k\in\{1,2\}$
\[
\int_{\O}|D_ku|\leq\liminf_{n\to\infty}\int_{\O} |\p_ku_n|.
\]
We now prove $\displaystyle\int_{\O}|D_ku|\geq\limsup_{n\to\infty}\int_{\O} |\p_ku_n|$.

Let $\xi\in C_c^1(\O,\R)$ with $|\xi|\leq1$. Since $\eta$ is a symmetric mollifier and $\sum \varphi_i=1$ we have

\begin{eqnarray*}
\int_\O u_n\p_k\xi&=&\sum_{i\geq1}\int_\O\eta_{\v_i}*(u\varphi_i)\p_k\xi
\\&=&\sum_{i\geq1}\int_\O u\varphi_i\p_k(\eta_{\v_i}*\xi)
\\&=&\sum_{i\geq1}\int_\O u\p_k[\varphi_i(\eta_{\v_i}*\xi)]-\sum_{i\geq1}\int_\O u\p_k\varphi_i(\eta_{\v_i}*\xi)
\\&=&\sum_{i\geq1}\int_\O u\p_k[\varphi_i(\eta_{\v_i}*\xi)]-\sum_{i\geq1}\int_\O \xi\left[\eta_{\v_i}*(u\p_k\varphi_i)-u\p_k\varphi_i\right].
\end{eqnarray*}
On the one hand we have [note that $\varphi_i(\eta_{\v_i}*\xi)\in C^1_c(A_i)$ and $|\varphi_i(\eta_{\v_i}*\xi)|\leq1$]
\begin{eqnarray*}
\left|\sum_{i\geq1}\int_\O u\p_k[\varphi_i(\eta_{\v_i}*\xi)]\right|&=&\left|\int_{A_1} u\p_k[\varphi_i(\eta_{\v_i}*\xi)]+\sum_{i\geq2}\int_{A_i} u\p_k[\varphi_i(\eta_{\v_i}*\xi)]\right|
\\
&\leq&\int_{\O}|D_ku|+\sum_{i\geq2}\int_{A_i}|D_k u|
\\
&\leq&\int_{\O}|D_ku|+3\int_{\O\setminus\O_0}|D_k u|
\\
&\leq&\int_{\O}|D_ku|+3\v.
\end{eqnarray*}
Here we used that each  point in $\O$ belongs to at most three of the sets $(A_i)_{i\geq1}$, for $i\geq2$ we have $A_i\subset\O\setminus\O_0$ and 
\[
\int_{\O\setminus\O_0}|D_k u|\leq\int_{\O\setminus\O_0}|D u|<\v.
\]
On the other hand, since for $i\geq1$ $\displaystyle\int_\O|\eta_{\v_i}*(u\n\varphi_i)-u\n\varphi_i|<\dfrac{\v}{2^i}$, we get
\[
\left|\sum_{i\geq1}\int_\O \xi\left[\eta_{\v_i}*(u\p_k\varphi_i)-u\p_k\varphi_i\right]\right|\leq\sum_{i\geq1}\int_\O \left|\eta_{\v_i}*(u\p_k\varphi_i)-u\p_k\varphi_i\right|<\v.
\]
Consequently
\[
\sup\left\{\int_{\O} u_n\p_k\xi\,;\,\xi\in C_c^1({\O},\R)\text{ and }|\xi|\leq1\right\}=\int_{\O} |\p_ku_n|\leq\int_{\O} |D_ku|+4\v
\]
and thus $\displaystyle\limsup_n\int_{\O} |\p_ku_n|\leq\int_{\O} |D_ku|.$ This inequality in conjunction with $\displaystyle\liminf_n\int_{\O} |\p_ku_n|\geq\int_{\O} |D_ku|$ proves the third assertion of Lemma \ref{LemApproxTechskdjfgh}.

The last assertion of Lemma \ref{LemApproxTechskdjfgh} is a direct consequence of the definition of the $u_n$'s.
\end{proof}
\section{Proofs of Lemma \ref{TecLem1}, Lemma \ref{TecLem2}, Lemma \ref{TecLem3} and Lemma \ref{LWeakPoincEst}}\label{Proof3Lemma}
\subsection{Proof of Lemma \ref{TecLem1}}\label{AppProTecLem1}
Let $u\in BV(\Q)$.  We prove that
\[
\int_{\Q}|D_2 u|\geq\int_0^1|\tr_{\p\Q}  u(\cdot,0)-\tr_{\p\Q} u(\cdot,1)|.
\]
From Lemma \ref{LemApproxTechskdjfgh}, there exists $(u_n)_n\subset C^1(\Q)$ s.t. $\tr_{\p\Q} u_n=\tr_{\p\Q} u$, $u_n\tos u$ and 
\[
\int_{\Q}|\p_2 u_n|\to\int_{\Q}|D_2 u|.
\]
From Fubini's theorem and the Fundamental theorem of calculus we have
\begin{eqnarray*}
\int_{\Q}|\p_2 u_n|&=&\int_0^1{\rm d}x_1\int_0^1|\p_2 u_n(x_1,x_2)|{\rm d}x_2
\\
&\geq&\int_0^1{\rm d}x_1\left|\int_0^1\p_2 u_n(x_1,x_2){\rm d}x_2\right|
\\
&=&\int_0^1{\rm d}x_1\left| \tr_{\p\Q} u_n(x_1,1)-\tr_{\p\Q} u_n(x_1,0)\right|
\\
&=&\int_0^1\left| \tr_{\p\Q} u(\cdot,1)-\tr_{\p\Q} u(\cdot,0)\right|.
\end{eqnarray*}
Since $\int_{\Q}|\p_2 u_n|\to\int_{\Q}|D_2 u|$, Lemma \ref{TecLem1} is proved.
\subsection{Proof of Lemma \ref{TecLem2}}\label{AppProTecLem2} Let $\O$ be a planar open set. Let $u\in BV(\O)$ be s.t.
\[
\int_{\O}|D u|=\int_{\O}|D_2 u|.
\]
We prove that $\displaystyle\int_{\O}|D_1 u|=0$. We argue by contradiction and we assume that $\displaystyle\int_{\O}|D_1 u|>0$, {\it i.e.}, there exists $\xi\in C_c^1(\O)$ s.t. $|\xi|\leq1$ and 
\[
\eta:=\int_\O u\p_1 \xi>0.
\]
Let $(\xi_n)_n\subset C_c^1(\O)$ be s.t. $|\xi_n|\leq1$ and 
\[
\eta_n:=\int_\O u\p_2 \xi_n\to\int_{\O}|D_2 u|.
\]

For $(\alpha,\beta)\in\{x\in\R^2\,;\,|x|\leq1\}$ we let $\xi_{\alpha,\beta}^{(n)}=(\alpha\xi,\beta\xi_n)\in C^1_c(\O,\R^2)$. Clearly, $|\xi_{\alpha,\beta}^{(n)}|\leq1$ and 
\begin{equation}\label{numXXX}
\int_{\O}|D u|\geq\int_{\O}u\Div(\xi_{\alpha,\beta}^{(n)})=\alpha\eta+\beta\eta_n.
\end{equation}
If we maximize the right hand side of \eqref{numXXX} w.r.t. $(\alpha,\beta)\in\{x\in\R^2\,;\,|x|\leq1\}$, then we find with $(\alpha,\beta)=\left(\dfrac{\eta}{\sqrt{\eta^2+\eta_n^2}},\dfrac{\eta_n}{\sqrt{\eta^2+\eta_n^2}}\right)$
\[
\int_{\O}|D u|\geq\sqrt{\eta^2+\eta_n^2}\underset{n\to\infty}{\to}\sqrt{\eta^2+\left(\int_{\O}|D u|\right)^2}>\int_{\O}|D u|.
\]
This is a contradiction.

\subsection{Proof of Lemma \ref{TecLem3}}\label{AppProTecLem3} Let $u\in BV(\Q)$ satisfying $\tr_{\p\Q} u=0$ in $\{0\}\times[0,1]$. We are going to prove that
\[
\int_{\Q}| u|\leq \int_{\Q}|D_1 u|.
\]
Let $(u_n)_n\subset C^1(\O)$ be given by Lemma \ref{LemApproxTechskdjfgh}. Using the Fundamental theorem of calculus we have for  $(x_1,x_2)\in\Q$
\[
|u_n(x_1,x_2)|\leq\int_0^{x_1}|\p_1 u_n(t,x_2)|{\rm d}t\leq\int_0^{1}|\p_1 u_n(t,x_2)|{\rm d}t.
\]
Therefore, from Fubini's theorem, we get
\[
\int_{\Q}| u_n|\leq\int_\Q {\rm d}x_1{\rm d}x_2\int_0^{1}|\p_1 u_n(t,x_2)|{\rm d}t=\int_0^1 {\rm d}x_2\int_0^{1}|\p_1 u_n(t,x_2)|{\rm d}t=\int_Q|\p_1 u_n|.
\]
It suffices to see that $\int_{\Q}| u_n|\to\int_{\Q}| u|$ and $\int_Q|\p_1 u_n|\to\int_Q|D_1 u|$ to get the result.
\subsection{Proof of Lemma \ref{LWeakPoincEst}}\label{ProofWeakPoincLem}
Let $\o\subset\R^2$ be an open connected set. Assume there exist $x_0\in\p\o$ and $r>0$ s.t. $\o\cap B(x_0,r)$ is Lipschitz.

Let $u\in BV(\o)$ satisfying  $\tr_{\p\o\cap B(x_0,r)}u=0$ and $\int_\o|Du|=0$. We are going to prove that $u=0$. On the one hand, since $\int_\o|Du|=0$, we get $u=C$ with $C\in\R$ a constant. We thus have $\tr_{\p\o\cap B(x_0,r)}u=C$.  Consequently $C=0$ and $u\equiv 0$.  

\section{Results related to  the Cantor set $\K$}\label{AppPreuveFat}
\subsection{Justification of Remark \ref{ConstructionFact}.\eqref{RemCorde}}\label{ProofOfChianteRemark}

We prove the following lemma:
\begin{lem}\label{LemUniqnessWelde}
Let $\eta>0$ and let $f\in C^2([0,\eta],\R)$ be s.t. $\eta<\dfrac{1}{2\|f'\|_{L^\infty([0,\eta])}\|f''\|_{L^\infty([0,\eta])}}$. We denote $C_f$ the graph of $f$ in an orthonormal frame $\repereO$.

For $0\leq a<b\leq\eta$, denoting $\C$ the chord $[(a,f(a)),(b,f(b))]$, for any  straight line $D$ orthogonal to $\C$ s.t. $D\cap\C\neq\emptyset$, the straight line $D$ intersect $C_{f,a,b}$ at exactly one points where $C_{f,a,b}$ is the part of $C_f$ delimited by $(a,f(a))$ and $(b,f(b))$.
\end{lem}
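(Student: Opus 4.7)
\medskip

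\noindent\textbf{Plan of proof.}

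The plan is to reduce the geometric question to a monotonicity statement for a scalar function of one variable. Denote the chord's slope in $\repereO$ by $m := (f(b)-f(a))/(b-a)$; by the mean value theorem $m = f'(c)$ for some $c\in(a,b)$, so $|m|\leq \|f'\|_{L^\infty([0,\eta])}$. A line $D$ is orthogonal to $\C$ iff its direction vector is orthogonal to $(1,m)$. Hence a point $(x,f(x))$ of $C_{f,a,b}$ lies on the line $D$ passing through a chosen point $(x_1,y_1)\in\C$ if and only if $((x,f(x))-(x_1,y_1))\cdot(1,m)=0$, which rewrites as
\[
\varphi(x):=x+m\,f(x)=x_1+m\,y_1.
\]
Thus I want to show that for every $(x_1,y_1)\in\C$ the equation $\varphi(x)=x_1+my_1$ has exactly one solution in $[a,b]$.

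The first step is uniqueness, via strict monotonicity of $\varphi$. Since $\varphi'(x)=1+m\,f'(x)$, I write
\[
\varphi'(x)=1+m^2+m\,(f'(x)-m),
\]
and I use the MVT once more to estimate $|f'(x)-m|=|f'(x)-f'(c)|\leq \|f''\|_{L^\infty([0,\eta])}\cdot\eta$. Combined with $|m|\leq \|f'\|_{L^\infty([0,\eta])}$ and the assumption $\eta<1/(2\|f'\|_{L^\infty([0,\eta])}\|f''\|_{L^\infty([0,\eta])})$, this gives
\[
\varphi'(x)\geq 1+m^2-\|f'\|_{L^\infty([0,\eta])}\|f''\|_{L^\infty([0,\eta])}\,\eta>\tfrac{1}{2}>0.
\]
So $\varphi$ is strictly increasing on $[a,b]$, giving uniqueness of $x$. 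This is really the heart of the proof, and the only place where the smallness assumption on $\eta$ enters; I expect it to be the main (mild) technical point.

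Finally I settle existence by the intermediate value theorem. Parametrizing $(x_1,y_1)=(a+t(b-a),f(a)+tm(b-a))$ for $t\in[0,1]$, a direct computation yields
\[
x_1+m\,y_1=\varphi(a)+t(1+m^2)(b-a)=\varphi(a)+t\,(\varphi(b)-\varphi(a)),
\]
which is therefore a convex combination of $\varphi(a)$ and $\varphi(b)$. Since $\varphi$ is continuous on $[a,b]$, this value is attained, producing the unique $x\in[a,b]$ with $\varphi(x)=x_1+m y_1$. Translating back, the line $D$ meets $C_{f,a,b}$ at exactly one point, which concludes the proof.
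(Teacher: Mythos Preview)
Your proof is correct. The core estimate is the same as the paper's: with $m=f'(c)$ for some $c\in(a,b)$, you bound $|m\,(f'(x)-m)|\leq \|f'\|_{L^\infty}\|f''\|_{L^\infty}\,\eta<\tfrac12$. The framing differs, however. The paper argues uniqueness by contradiction: if two intersection points $(x,f(x))$ and $(y,f(y))$ existed, then the secant through them would be orthogonal to the chord, hence $f'(c)\,f'(\tilde c)=-1$ for some $c,\tilde c\in[0,\eta]$, and the same estimate then yields $|f'(c)[f'(\tilde c)-f'(c)]|<\tfrac12$ while the left-hand side equals $|{-}1-f'(c)^2|\geq1$. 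Existence is only asserted, by ``solving an equation''. Your route is more direct: you reformulate the intersection condition as a level-set equation $\varphi(x)=x_1+my_1$ for $\varphi(x)=x+mf(x)$, prove $\varphi'>0$ (uniqueness), and then observe that $x_1+my_1$ is a convex combination of $\varphi(a)$ and $\varphi(b)$, so the IVT gives existence. This packages both conclusions in one monotone function and gives a clean, self-contained existence argument that the paper does not spell out.
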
 
\begin{remark}
We may state an analog result with $f\in C^1$ where we use the modulus of continuity of $f'$ instead of $\|f''\|_\infty$ in the hypothesis.
\end{remark}
\begin{proof}
The key point here is uniqueness. Indeed, for $0\leq a<b\leq\eta$ and $\C,D$ as in the lemma, we may easily prove that $C_{f,a,b}\cap D\neq\emptyset$ by solving an equation. [We do not use $\eta<(2\|f'\|_{L^\infty([0,\eta])}\|f''\|_{L^\infty([0,\eta])})^{-1}$ for the existence]

In contrast with the existence of an intersection point, its uniqueness is valid only for $\eta$ not too large. To prove uniqueness we argue by contradiction and we consider $f$ and $\eta$ as in lemma and we assume that there exist two points  $0\leq a<b\leq\eta$ s.t.  there exist $a\leq x<y\leq b$ s.t. the segments $[(x,f(x)),(y,f(y))]$ and $[(a,f(a)),(b,f(b))]$ are orthogonal. Note that with this hypothesis the straight line $D:=((x,f(x)),(y,f(y)))$ is orthogonal to the chord $\C:=[(a,f(a)),(b,f(b))]$.

So we get
\[
\frac{f(y)-f(x)}{y-x}=-\frac{b-a}{f(b)-f(a)}.
\]
From the Mean Value Theorem, there exist $c\in(x,y)$ and $\tilde{c}\in(a,b)$ s.t. $f'(c)=-\dfrac{1}{f'(\tilde{c})}$. Consequently
\begin{equation}\label{ContradictionPerpLame}
f'(c)\times[f'(\tilde{c})-f'(c)]=-1-[f'(c)]^2.
\end{equation}
From the hypothesis $\eta<(2\|f'\|_{L^\infty([0,\eta])}\|f''\|_{L^\infty([0,\eta])})^{-1}$, we have
\[
\left|f'(\tilde{c})-f'(c)\right|\leq \eta\|f''\|_{L^\infty([0,\eta])}< \dfrac{1}{2\|f'\|_{L^\infty([0,\eta])}}.
\]
Therefore,  we get 
\[
\left|f'(c)\times[f'(\tilde{c})-f'(c)]\right|< \dfrac{1}{2}
\]
which is in contradiction with \eqref{ContradictionPerpLame}.
\end{proof}
\subsection{Two preliminary results}\label{SAppeLOrth}
We first prove a standard result which states that the length of a {\it small} chord is a {\it good} approximation for the length of a curve.
\begin{lem}\label{LOrth}
Let $0<\eta<1$ and let $f\in C^2([0,\eta],\R^+)$. We fix an orthonormal frame and we denote $C_f$ the graph of  $f$ in the orthonormal frame. Let  $A=(a,f(a)),B=(b,f(b))\in C_f$ (with  $0\leq a<b\leq\eta$) and let $\C=[AB]$ be the chord of  $C_f$ joining $A$ and $B$. We denote $\wideparen{AB}$ the arc of $C_f$ with endpoints $A$ and $B$.

We have
\[
\H^1(\C)\leq\H^1(\wideparen{AB})\leq\H^1(\C)\left\{1+(b-a)\|f''\|_{L^\infty}[2\|f'\|_{L^\infty}+\|f''\|_{L^\infty}(b-a)]\right\}.
\]

\end{lem}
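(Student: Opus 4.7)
\textbf{Proof plan for Lemma \ref{LOrth}.}

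My approach is to parametrize the arc in the standard way and compare both lengths as integrals over $[a,b]$. Writing $\gamma(x)=(x,f(x))$, we have $\H^1(\wideparen{AB})=\int_a^b\sqrt{1+f'(x)^2}\,dx$ and $\H^1(\C)=|\gamma(b)-\gamma(a)|=\left|\int_a^b\gamma'(x)\,dx\right|$. The lower bound $\H^1(\C)\leq\H^1(\wideparen{AB})$ is then immediate from the triangle inequality for vector-valued integrals, $\left|\int_a^b\gamma'\right|\leq\int_a^b|\gamma'|$.

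For the upper bound, the first step is to rewrite $\H^1(\C)$ using the mean value theorem: there exists $\tilde c\in(a,b)$ with $f(b)-f(a)=(b-a)f'(\tilde c)$, so $\H^1(\C)=(b-a)\sqrt{1+f'(\tilde c)^2}$. This lets me express the excess length as
\[
\H^1(\wideparen{AB})-\H^1(\C)=\int_a^b\left[\sqrt{1+f'(x)^2}-\sqrt{1+f'(\tilde c)^2}\right]dx.
\]
The key pointwise estimate uses the conjugate identity
\[
\sqrt{1+f'(x)^2}-\sqrt{1+f'(\tilde c)^2}=\frac{[f'(x)-f'(\tilde c)]\,[f'(x)+f'(\tilde c)]}{\sqrt{1+f'(x)^2}+\sqrt{1+f'(\tilde c)^2}}.
\]
I would then bound the three factors separately: a second application of the mean value theorem gives $|f'(x)-f'(\tilde c)|\leq(b-a)\|f''\|_{L^\infty}$; the triangle inequality gives $|f'(x)+f'(\tilde c)|\leq 2\|f'\|_{L^\infty}+(b-a)\|f''\|_{L^\infty}$ (this is the form that matches the statement, obtained by adding and subtracting $f'(\tilde c)$ before bounding); and the denominator is at least $\sqrt{1+f'(\tilde c)^2}$.

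Putting these together and integrating over $[a,b]$, the excess length is bounded by $(b-a)^2\|f''\|_{L^\infty}[2\|f'\|_{L^\infty}+(b-a)\|f''\|_{L^\infty}]/\sqrt{1+f'(\tilde c)^2}$, and dividing by $\H^1(\C)=(b-a)\sqrt{1+f'(\tilde c)^2}\geq(b-a)$ yields the claimed multiplicative factor $1+(b-a)\|f''\|_{L^\infty}[2\|f'\|_{L^\infty}+(b-a)\|f''\|_{L^\infty}]$. There is no real obstacle here: everything reduces to two applications of the mean value theorem and the elementary identity above; the only mild subtlety is keeping the constants organized so that the bound on $|f'(x)+f'(\tilde c)|$ produces the specific form written in the statement rather than the cruder $2\|f'\|_{L^\infty}$.
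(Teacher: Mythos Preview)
Your proof is correct and follows essentially the same route as the paper: both use the mean value theorem to write $\H^1(\C)=(b-a)\sqrt{1+f'(c)^2}$ and then a second application of the mean value theorem to control $f'(x)-f'(c)$. The only cosmetic difference is that the paper factors $\sqrt{1+f'(x)^2}=\sqrt{1+f'(c)^2}\sqrt{1+\text{(small)}}$ and bounds the second factor, whereas you rationalize the difference via the conjugate identity; the resulting estimates are the same.
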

\begin{proof}The estimate  $\H^1(\C)\leq\H^1(\wideparen{AB})$ is standard, we thus prove the second inequality.

On the one hand 
\[
\H^1(\C)=\sqrt{(a-b)^2+[f(a)-f(b)]^2}=(b-a)\sqrt{1+\left(\dfrac{f(a)-f(b)}{a-b}\right)^2}.
\]
On the other hand
\[
\H^1(\wideparen{AB})=\int_{a}^{b}\sqrt{1+f'^2}.
\]
With the help of the Mean Value Theorem, there exists $c\in(a,b)$ s.t.
\[
\dfrac{f(a)-f(b)}{a-b}=f'(c).
\]
Applying once again the Mean Value Theorem [to  $f'$], for $x\in[a,b]$ there exists $c_x$ between $c$ and $x$ s.t.
\[
f'(x)=f'(c)+f''(c_x)(x-c).
\]

Consequently for $x\in[a,b]$ we have:
\begin{eqnarray*}
\sqrt{1+f'(x)^2}&=&\sqrt{1+[f'(c)+f''(c_x)(x-c)]^2}
\\&=&\sqrt{1+f'(c)^2}\sqrt{1+\dfrac{2f'(c)f''(c_x)(x-c)+f''(c_x)^2(x-c)^2}{1+f'(c)^2}}
\\&\leq&\sqrt{1+\left(\dfrac{f(a)-f(b)}{a-b}\right)^2}\left[1+2\|f'\|_{L^\infty}\|f''\|_{L^\infty}(b-a)+\|f''\|^2_{L^\infty}(b-a)^2\right].
\end{eqnarray*}
Thus we have
\begin{eqnarray*}
\H^1(\wideparen{AB})&=&\int_{a}^{b}\sqrt{1+f'(x)^2}{\,\rm d}x
\\&\leq&(b-a)\sqrt{1+\left(\dfrac{f(a)-f(b)}{a-b}\right)^2}\left[1+2\|f'\|_{L^\infty}\|f''\|_{L^\infty}(b-a)+\|f''\|^2_{L^\infty}(b-a)^2\right]
\\&=&\H^1(\C)\left\{1+(b-a)\|f''\|_{L^\infty}[2\|f'\|_{L^\infty}+\|f''\|_{L^\infty}(b-a)]\right\}.
\end{eqnarray*}
\end{proof}

We now state another technical  lemma which gives an upper bound for the height of the curve w.r.t. its chord. 
\begin{lem}\label{LCorde}
Let $0\leq a< b\leq\eta$, $f\in C^2([0,\eta],\R^+)$ be a strictly concave function and let $C_f$ be the graph of $f$ in an orthonormal frame. Let $A=(a,f(a))$ and $B=( b,f( b))$ be two points of $C_f$. 

Assume that we have $\eta<\dfrac{1}{2\|f'\|_{L^\infty([0,\eta])}\|f''\|_{L^\infty([0,\eta])}}$ in order to define for $C\in[AB]$  [with the help of Lemma \ref{LemUniqnessWelde}]  $\tilde{C}$ as the unique intersection point of $C_f$ with the line orthogonal to $[AB]$ passing by $C$. 

We have
\[
\H^1([C\tilde C])\leq\dfrac{( b-a)^2\|f''\|_{L^\infty}}{8}.
\]
\end{lem}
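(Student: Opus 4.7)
The plan is to reduce the geometric statement to an estimate on the vertical gap between the graph $C_f$ and its chord $[AB]$. I introduce $g\colon [a,b]\to\R$ defined by $g(x):=f(x)-\ell(x)$, where $\ell$ is the affine interpolation of $f$ at $a$ and $b$, so that the graph of $\ell$ over $[a,b]$ is exactly the chord $[AB]$. By construction $g(a)=g(b)=0$, $g''=f''$ on $[a,b]$, and $g\ge 0$ by concavity of $f$.

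The first step would be to establish
\[
\max_{[a,b]} g \;\leq\; \frac{(b-a)^{2}\|f''\|_{L^{\infty}}}{8}.
\]
If the maximum is attained at an endpoint then $g\equiv 0$ and the lemma is trivial; otherwise $g$ attains its maximum at some interior critical point $c\in (a,b)$ with $g'(c)=0$. Applying Taylor's formula at $c$ to both endpoints yields $\xi_{1}\in(a,c)$ and $\xi_{2}\in(c,b)$ such that
\[
0=g(a)=g(c)+\tfrac{1}{2}g''(\xi_{1})(c-a)^{2},\qquad 0=g(b)=g(c)+\tfrac{1}{2}g''(\xi_{2})(b-c)^{2}.
\]
Using $|g''|=|f''|\leq \|f''\|_{L^{\infty}}$ one obtains
$g(c)\leq\tfrac{1}{2}\|f''\|_{L^{\infty}}\min\{(c-a)^{2},(b-c)^{2}\}\leq \tfrac{1}{8}\|f''\|_{L^{\infty}}(b-a)^{2}$,
the last inequality coming from $\min\{c-a,b-c\}\leq (b-a)/2$.

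The second step converts this vertical bound into a perpendicular one. Let $\alpha$ denote the angle the chord $[AB]$ makes with the horizontal, fix $C\in[AB]$, let $\tilde{C}=(x,f(x))$ be the unique point provided by Lemma~\ref{LemUniqnessWelde}, and set $Q:=(x,\ell(x))$. Since $x\in[a,b]$, $Q$ lies on the chord and $[Q\tilde{C}]$ is a vertical segment of length $g(x)$. The triangle $CQ\tilde{C}$ is right-angled at $C$ (because $[C\tilde{C}]\perp [AB]$ while $[CQ]\subset [AB]$), and its angle at $\tilde{C}$ equals $\alpha$ (since $[Q\tilde{C}]$ is vertical while $[C\tilde{C}]$ is orthogonal to the chord). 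Reading the cosine at $\tilde{C}$ gives
\[
\H^{1}([C\tilde{C}])\;=\;\cos\alpha\cdot \H^{1}([Q\tilde{C}])\;=\;g(x)\cos\alpha\;\leq\; g(x)\;\leq\;\frac{(b-a)^{2}\|f''\|_{L^{\infty}}}{8}.
\]

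I expect the only delicate point to be this final geometric identification: that the perpendicular to $[AB]$ through $C$ really meets the graph in a single point $\tilde{C}$ with abscissa in $[a,b]$, and that the foot $Q$ of the vertical through $\tilde{C}$ actually lies on the chord rather than outside it. Both are guaranteed by the hypothesis $\eta<(2\|f'\|_{L^{\infty}}\|f''\|_{L^{\infty}})^{-1}$ through Lemma~\ref{LemUniqnessWelde}; beyond this, the argument reduces to a one-line Taylor expansion and elementary planar trigonometry.
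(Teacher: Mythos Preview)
Your proof is correct and follows essentially the same route as the paper: introduce $g=f-\ell$, bound $\max g$ by Taylor at its interior critical point to get the factor $(b-a)^2\|f''\|_{L^\infty}/8$, and then observe that the perpendicular distance $\H^1([C\tilde C])$ is at most the vertical gap $g(x)$. Your right-triangle argument for this last step is in fact slightly cleaner than the paper's, which asserts the identity $\sup_{C}\H^1([C\tilde C])=\max g$ (only the inequality $\leq$ actually holds and is needed, since the perpendicular distance equals $g(x)\cos\alpha$).
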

\begin{proof}
Let $0\leq a< b\leq\eta$, $f\in C^2([0,\eta],\R^+)$ be as in Lemma \ref{LCorde}.

We consider the function
\[
\begin{array}{cccc}
g:&[0,\eta]&\to&\R\\&x&\mapsto& f(x)-\left[\dfrac{f(b)-f(a)}{b-a}(x-a)+f(a)\right]
\end{array}.
\]
It is clear that $g$ is non negative since $f$ is strictly concave.

For $C\in[AB]$, we let $\tilde{C}$ be as in  Lemma \ref{LCorde}. Then we have
\[
\sup_{C\in[AB]}\H^1([C\tilde C])=\max_{[0,\eta]}g.
\]
Thus, it suffices to prove $\max_{[0,\eta]}g\leq\dfrac{( b-a)^2\|f''\|_{L^\infty}}{8}$.

Since $g$ is $C^1$ and $g(a)=g(b)=0$, there exists $c\in(a,b)$ s.t. 
\[
\text{$g(c)=\max_{[0,\eta]}g$ and $g'(c)=0$.}
\] Let $t\in\{a,b\}$ be s.t. $|t-c|\leq\dfrac{b-a}{2}$. Using a Taylor expansion, there exists $\tilde{c}$ between $c$ and $t$ s.t.
\[
0=g(t)=g(c)+(t-c)g'(c)+\dfrac{(t-c)^2}{2}g''(\tilde{c}).
\]
Thus 
\[
0\leq\max_{[0,\eta]}g=g(c)=-\dfrac{(t-c)^2}{2}g''(\tilde{c})\leq\dfrac{( b-a)^2\|f''\|_{L^\infty}}{8}.
\]
The last inequality completes the proof.
\end{proof}
\subsection{Proof of Proposition \ref{defKinffty}}\label{ProofOfPropDefKinkdfg}

We prove that 
\begin{equation}\label{LiminfEst}
\liminf_{N\to\infty}\H^1(\K_N)>0.
\end{equation}

{\bf Step 1.} We prove that $\displaystyle\max_{k=1,...,2^N}\H^1(\C_k^N)\leq\left(\dfrac{2}{3}\right)^N$

For $N\geq1$ we let $\{K_k^N\,;\,k=1,...,2^N\}$ be the set of the connected components of $\K_N$. We let $\C_k^N$ be the chord of $K_k^N$ and we define $\mu_N=\max_{k=1,...,2^N}\H^1(\C_k^N)$. Note that by \eqref{MainGeomHyp} we have $\mu_0<1$.

We first prove that for  $N\geq0$ we have
\begin{equation}\label{AuxGoodEst}
\mu_{N+1}\leq\frac{2}{3}\mu_N.
\end{equation}
By induction \eqref{AuxGoodEst} implies  [since to $\mu_0<1$]
\begin{equation}\label{AuxGoodEstReformulate}
\mu_{N}\leq\left(\frac{2}{3}\right)^N.
\end{equation} 
In order to get \eqref{AuxGoodEst}, we prove that for $N\geq1$ and $K_k^N$ a connected component of $\K_N$ and $\C_k^N$ its chord, we have 
\begin{equation}\label{AuxGoodEstAltern}
\H^1(\C)\leq\dfrac{2\H^1(\C_k^N)}{3}\text{ for $\C\in\mathcal{S}(\C_k^N)$}
\end{equation} [see Notation \ref{Not.FatherChord} for $\mathcal{S}(\cdot)$, the set of sons of a chord].

Let $N\geq1$. For $k\in\{1,...,2^N\}$, we let $K_k^N$ be a connected component of $\K_N$.  We let $K_{2k-1}^{N+1},K_{2k}^{N+1}\in\mathcal{S}(K_k^N)$ be the curve obtained from $K_k^N$ in the induction step.

For $\tilde k\in\{2k-1,2k\}$, we let  $\C_{\tilde k}^{N+1}$ be the chords of  $K_{\tilde k}^{N+1}$.

In the frame $\repereO$, we may define four points of $\Gamma$, $(a_1,f(a_1)),(b_1,f(b_1)),(a_2,f(a_2)),(b_2,f(b_2))$, with $0<a_1<b_1<a_2<b_2<\eta$  s.t.:
\begin{itemize}
\item  the endpoints of  $K_{2k-1}^{N+1}$ are $(a_1,f(a_1))\&(b_1,f(b_1))$;
\item the endpoints of $K_{2k}^{N+1}$ are $(a_2,f(a_2))\&(b_2,f(b_2))$;
\item the endpoints of $K_{k}^{N}$ are $(a_1,f(a_1))\&(b_2,f(b_2))$.
\end{itemize}
In the frame $\repereO$ we let also $(\alpha_1,\beta_1),(\alpha_2,\beta_2)$ be the coordinates of the points of $\C_k^N$ s.t. for $l\in\{1,2\}$, the triangles whose vertices are $\{(a_l,f(a_l));(b_l,f(b_l));(\alpha_l,\beta_l)\}$ are right angled in $(\alpha_l,\beta_l)$.

We denote \begin{itemize}
\item $\I_1$ the segment $[(b_1,f(b_1));(\alpha_1,\beta_1)]$; 
\item $\I_2$  the segment $[(a_2,f(a_2));(\alpha_2,\beta_2)]$.
\end{itemize}
From the construction of $K_{2k-1}^{N+1}\&K_{2k}^{N+1}$ and from Pythagorean theorem we have for $l=1,2$
\[
\H^1(\C_{2k-2+l}^{N+1})^2=\H^1(\I_l)^2+\left(\frac{\H^1(\C_k^N)-\H^1(\C_k^N)^2}{2}\right)^2.
\]
Using Lemma \ref{LCorde} we get that 
\[
\H^1(\I_l)\leq ( b_2-a_1)^2\|f''\|_{L^\infty}.
\]
On the other hand we have obviously $b_2-a_1\leq\H^1(\C_k^N)$. Consequently we get
\begin{eqnarray*}
\H^1(\C_{2k-2+l}^{N+1})^2&\leq&\H^1(\C_k^N)^4\|f''\|^2_{L^\infty}+\left(\frac{\H^1(\C_k^N)-\H^1(\C_k^N)^2}{2}\right)^2
\\
&\leq&\H^1(\C_k^N)^4\|f''\|^2_{L^\infty}+\frac{\H^1(\C_k^N)^2}{4}.
\end{eqnarray*}
Therefore
\[
\H^1(\C_{2k-2+l}^{N+1})\leq\dfrac{\H^1(\C_k^N)}{2}\sqrt{1+4\|f''\|^2_{L^\infty}\H^1(\C_k^N)^2},
\]
thus using \eqref{AltFondEq} we get
\[
\H^1(\C_{2k-2+l}^{N+1})\leq\dfrac{2\H^1(\C_k^N)}{3}.
\]
The last estimate gives \eqref{AuxGoodEstAltern} and thus \eqref{AuxGoodEstReformulate} holds.\\

{\bf Step 2.} We prove that $\displaystyle\liminf_{N\to\infty}\sum_{k=1}^{2^N}\H^1(\C^N_k)>0$

For $N\geq1$, we let 
\[
c_N=\sum_{k=1}^{2^N}\H^1(\C^N_k).
\]

The main ingredient in this step consists in noting that, a son of $\C^N_k$ is an hypothenuse of a right angled triangle which admits a  cathetus  of length $\dfrac{\H^1(\C^N_k)-\H^1(\C^N_k)^2}{2}$. 

Consequently we have
\[
\H^1(\C^{N+1}_{2k-1})+\H^1(\C^{N+1}_{2k})\geq \H^1(\C^N_k)-\H^1(\C^N_k)^2.
\]
Thus, summing the previous inequality for $k=1,...,2^N$ we get
\begin{eqnarray*}
c_{N+1}&=&\sum_{k=1}^{2^N}\H^1(\C^{N+1}_{2k-1})+\H^1(\C^{N+1}_{2k})
\geq\sum_{k=1}^{2^N} \H^1(\C^N_k)[1-\H^1(\C^N_k)]
\geq c_N(1-\mu_N)
\geq c_N\left[1-\left(\frac{2}{3}\right)^N\right].
\end{eqnarray*}
By induction for $N\geq2$
\begin{eqnarray*}
c_N\geq c_1\prod_{k=1}^{N-1}\left[1-\left(\frac{2}{3}\right)^k\right]
=c_1\times{\rm exp}\left[\sum_{k=1}^{N-1}\ln\left[1-\left(\frac{2}{3}\right)^k\right]\right].
\end{eqnarray*}
It is clear that $\liminf_N\sum_{l=1}^{N-1}\ln\left[1-\left(\frac{2}{3}\right)^k\right]>-\infty$, thus $\liminf_N c_N>0$.\\

{\bf Step 3.} We prove \eqref{LiminfEst}.

Since for $K^N_k$, a connected component  of $\K_N$, and $\C_k^N$ its chord, we have $\H^1(K_k^N)\geq\H^1(\C_k^N)$, from Step 2 we get \eqref{LiminfEst}.
\section{A fundamental ingredient in the construction of the $\tilde{\Psi}_N$'s}\label{AppeSepTr}
In this section we use the notation of Sections \ref{SectConstructCantor} and \ref{SectionConstructFunction}.
 \begin{lem}\label{PropMainGeomHyp}
Let $\gamma\subset\,\curve{AB}$ be a curve and let $\C$ be its chord. We let  $\gamma_1,\gamma_2$ be the curves included in  $\gamma$ obtained by the induction construction represented Figure \ref{Heredite} [section \ref{SectHerdityCOnstruKantor}]. For $l=1,2$, we denote also by $\C_l$ the chord of $\gamma_l$ and by $T_l$ the right-angled triangle having $\C_l$ as side of the right-angle and having its hypothenuse included in $\C$.

If $\H^1(\C)<\min\{2^{-1},(4\|f''\|_{L^\infty}^2)^{-2}\}$, then the hypothenuses of the triangles $T_1$ and $T_2$ have their length strictly lower than $\dfrac{\H^1(\C)}{2}$. And in particular the triangles $T_1$ and $T_2$ are disjoint.
\end{lem}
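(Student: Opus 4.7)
I would place $\C$ on the $x$-axis in a new orthonormal frame, with endpoints $P_0=(0,0)$ and $P_3=(L,0)$, where $L:=\H^1(\C)$. By the induction rule of Section \ref{SectHerdityCOnstruKantor}, the segment $I$ centered in $\C$ of length $L^2$ has endpoints $(m,0)$ and $(L-m,0)$ with $m=(L-L^2)/2$, and the perpendiculars to $\C$ through these endpoints meet $\gamma$ at two points $P_1=(m,h_1)$ and $P_2=(L-m,h_2)$, with $h_1,h_2>0$ by the concavity setup. Then $\gamma_1$ has endpoints $P_0,P_1$ (chord $\C_1=[P_0,P_1]$) and $\gamma_2$ has endpoints $P_2,P_3$ (chord $\C_2=[P_2,P_3]$).

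\textbf{Identification of $T_l$ and computation of $|H_l|$.} Since $T_1$ has $\C_1$ as a leg and its hypothenuse inside $\C$, the right angle cannot be at $P_0\in\C$ (otherwise the hypotenuse would not touch $\C$ at $P_1$); it must therefore sit at $P_1$. Writing $Q_1=(q_1,0)$ for the third vertex and imposing $\vec{P_1P_0}\cdot\vec{P_1Q_1}=0$ yields $q_1=m+h_1^2/m$, so
\[
|H_1|\;=\;q_1\;=\;\dfrac{L-L^2}{2}+\dfrac{2h_1^2}{L-L^2}.
\]
Consequently $|H_1|<L/2$ is equivalent to the clean inequality
\[
h_1^2<\dfrac{L^3(1-L)}{4}.
\]
A symmetric argument gives the analogous formula for $|H_2|$.

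\textbf{Bounding $h_l$ and closing the estimate.} By Lemma \ref{LCorde} applied in the original frame on the interval $[a,b]$ corresponding to the endpoints of $\gamma$, the perpendicular distance from any point of $\C$ to $\gamma$ is at most $(b-a)^2\|f''\|_{L^\infty}/8$. Since $b-a\le L$, we get $h_l^2\le L^4\|f''\|_{L^\infty}^2/64$, so it suffices to check $L\|f''\|_{L^\infty}^2<16(1-L)$. Multiplying the two hypotheses $L<1/2$ and $L<(4\|f''\|_{L^\infty}^2)^{-2}=1/(16\|f''\|_{L^\infty}^4)$ gives $L^2\|f''\|_{L^\infty}^4<1/32$, hence $L\|f''\|_{L^\infty}^2<1/(4\sqrt{2})<1$; combined with $16(1-L)>8$, this yields the required inequality and so $|H_1|,|H_2|<L/2$.

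\textbf{Disjointness and main difficulty.} The vertices of $T_1$ have $x$-coordinates $0,m,q_1$, the largest being $q_1=|H_1|$, so $T_1\subset\{0\le x\le|H_1|\}$; symmetrically $T_2\subset\{L-|H_2|\le x\le L\}$. Since $|H_1|+|H_2|<L$, these strips are disjoint and so are $T_1$ and $T_2$. The only genuinely geometric point in the argument is recognizing that the right angle of $T_l$ must sit at the ``interior'' vertex $P_l$ rather than at the endpoint of $\C$; everything after that is Pythagoras together with the perpendicular-height bound from Lemma \ref{LCorde}, so I do not expect a real obstacle beyond matching the constants.
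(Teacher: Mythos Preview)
Your proof is correct and follows essentially the same route as the paper: rotate the chord $\C$ onto the $x$-axis, locate the right angle at the ``interior'' vertex $P_l$, compute the foot of the hypotenuse via the orthogonality condition $\vec{P_1P_0}\cdot\vec{P_1Q_1}=0$ (the paper phrases this as Pythagoras, obtaining the equivalent $x_4=(1+a^2)x_1$), bound the perpendicular height by Lemma~\ref{LCorde}, and close with elementary algebra. The only cosmetic differences are that you retain the sharp factor $1/8$ from Lemma~\ref{LCorde} and combine the two hypotheses by multiplying them, whereas the paper drops the $1/8$ and reduces directly to the sufficient condition $\delta<1/(16\|f''\|_{L^\infty}^2)$.
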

\begin{remark}\label{RemPropMainGeomHyp}
From \eqref{MainGeomHyp}, we know that $\C_0=\C_1^0$ is s.t. $\H^1(\C_1^0)<\min\{2^{-1},(4\|f''\|_{L^\infty}^2)^{-2}\}$. From \eqref{AuxGoodEst} we have that for $N\geq1$ and $k\in\{1,...,2^N\}$ we have $\H^1(\C_k^N)<\H^1(\C_1^0)<\min\{2^{-1},(4\|f''\|_{L^\infty}^2)^{-2}\}$. 

Therefore with the help of Lemma \ref{PropMainGeomHyp}, for $N\geq1$, the triangles $T_k^N$'s are pairwise disjoint. 
\end{remark}
\begin{proof}
We model the statement by denoting  $\{M,Q\}$ the set of endpoints of $\gamma$ and $N$ and $P$ are points s.t.:
\begin{itemize}
\item[$\bullet$]$M,N$ are the endpoints of $\gamma_1$ 
\item[$\bullet$] $P,Q$ are the endpoints of $\gamma_2$. 
\end{itemize}
We denote $\delta:=\H^1([MQ])=\H^1(\C)<\min\{2^{-1},(4\|f''\|_{L^\infty}^2)^{-2}\}$.

We fix an orthonormal frame $\tilde\repere$ with the origin in $M$, with the $x$-axis  $(MQ)$ and s.t. $N,P,Q$ have respectively for coordinates  $(x_1,y_1)$, $(x_2,y_2)$ and $(x_3,0)$ where $0<x_1<x_2<x_3$ and $y_1,y_2>0$. 

By construction we have
\[
x_1=\dfrac{\delta-\delta^2}{2},\,x_2=\dfrac{\delta+\delta^2}{2}\text{ and }x_3=\delta.
\]

Moreover, arguing as in the proof of Lemma \ref{LCorde} we have [recall that $\curve{AB}$ is the graph of a function $f$ in an other orthonormal frame]:
\[
0< y_1,y_2\leq\delta^2\|f''\|_{L^\infty}.
\]
\begin{figure}[h]
\psset{xunit=1.0cm,yunit=1.0cm,algebraic=true,dimen=middle,dotstyle=o,dotsize=3pt 0,linewidth=0.8pt,arrowsize=3pt 2,arrowinset=0.25}
\begin{pspicture*}(-1,-1)(15,3)
\psline(0,0)(6,2)
\psline(7.6,1.54)(14,0)
\psline(6,2)(6.67,0)
\psline(7.6,1.54)(7.23,0)
\psline(0,0)(14,0)
\begin{scriptsize}
\psdots[dotstyle=*,linecolor=darkgray](0,0)
\rput[bl](-.5,0.12){{$(0,0)$}}
\psdots[dotstyle=*](6,2)
\rput[bl](5.5,2.12){{$(x_1,y_1=ax_1)$}}
\psdots[dotstyle=*](7.6,1.54)
\rput[bl](7.2,1.66){{$(x_2,y_2=\alpha x_2+\beta)$}}
\psdots[dotstyle=*](14,0)
\rput[bl](14.08,0.12){{$(x_3,0)$}}
\rput[bl](3.1,1.5){$y=ax$}
\rput[bl](10.72,1){$y=\alpha x+\beta$}
\psdots[dotstyle=*,linecolor=darkgray](6.67,0)
\rput[bl](6.2,-0.5){{$(x_4,0)$}}
\psdots[dotstyle=*,linecolor=darkgray](7.23,0)
\rput[bl](7.3,-0.5){{$(x_5,0)$}}
\psline(6,2)(5.66,1.89)
\psline(5.66,1.89)(5.77,1.54)
\psline(5.77,1.54)(6.11,1.66)
\psline(6.11,1.66)(6,2)
\psline(7.6,1.54)(7.53,1.25)
\psline(7.53,1.25)(7.82,1.18)
\psline(7.82,1.18)(7.89,1.47)
\psline(7.89,1.47)(7.6,1.54)
\end{scriptsize}
\end{pspicture*}\caption{Model problem}
\end{figure}
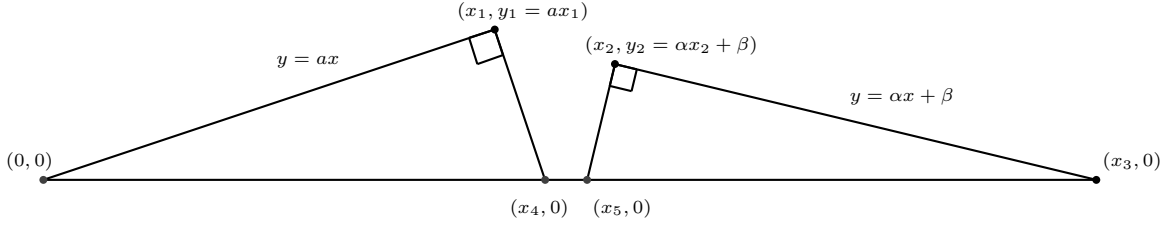

From these points, in Section \ref{SectHerdityCOnstruKantor}, we defined two right-angled triangles having their hypothenuses contained in the $x$-axis.
 
 The first triangle admits for vertices  the origin $(0,0)$, $(x_1,y_1)$ and a point of the $x$-axis $(x_4,0)$. This triangle is right angled in $(x_1,y_1)$. In the frame $\tilde\repere$, one of the side of the right-angle is included in the line parametrized by the cartesian equation  $y=ax$. Since $\delta\leq1/2$
 \[
 |a|=\left|\dfrac{y_1}{x_1}\right|\leq\dfrac{2\delta^2\|f''\|_{L^\infty}}{\delta-\delta^2}\leq4\|f''\|_{L^\infty}\delta.
 \]
  
The second triangle admits for vertices $(x_2,y_2)$, $(x_3,0)$ and a point of the $x$-axis $(x_5,0)$. This triangle is right-angled in $(x_2,y_2)$. In the frame $\tilde\repere$, one of the side of the right-angle is included in the line parametrized by the cartesian equation $y=\alpha x+\beta$ where 
 \[
 |\alpha|=\left|\dfrac{y_2}{x_2-x_3}\right|\leq\dfrac{2\delta^2\|f''\|_{L^\infty}}{\delta-\delta^2}\leq4\|f''\|_{L^\infty}\delta.
 \]
 
The  proof of the proposition consists in obtaining
\[
x_4<\dfrac{x_3}{2}\text{ and }x_3-x_5< \dfrac{x_3}{2}.
\]

We get the first estimate. With the help of Pythagorean theorem we have
\[
x_1^2+y_1^2+(x_1-x_4)^2+y_1^2=x_4^2.
\]
By noting that $y_1=ax_1$ we have 
\[
x_4=(1+a^2)x_1.
\]
Thus:
\begin{eqnarray*}
x_4<\dfrac{x_3}{2}&\Longleftrightarrow&(1+a^2)\dfrac{\delta-\delta^2}{2}<\dfrac{\delta}{2}
\\
&\Longleftarrow&(1+16\|f''\|_{L^\infty}^2\delta^2)(1-\delta)<1
\\
&\Longleftrightarrow&\delta-\delta^2<\dfrac{1}{16\|f''\|_{L^\infty}^2}
\\
&\Longleftarrow&\delta<\dfrac{1}{16\|f''\|_{L^\infty}^2}.
\end{eqnarray*}

Following the same strategy we get that if  $\delta<\dfrac{1}{16\|f''\|_{L^\infty}^2}$ then $x_3-x_5< \dfrac{x_3}{2}$.
\end{proof}
\section{Adaptation of a result of Giusti in \cite{giusti1984minimal}}\label{AppendixGiusti}
In this appendix we present briefly the proof of Theorem 2.16 and Remark 2.17 in \cite{giusti1984minimal}. The argument we present below follows the proof of Theorem 2.15 in \cite{giusti1984minimal}.
\begin{prop}\label{ExtGiustiUpBound}
Let $\O\subset\R^n$ be a bounded open set of class $C^2$ and let $h\in L^1(\p\O)$. For all $\v>0$ there exists $u_\v\in W^{1,1}(\O)$ s.t. $\tr_{\p\O}u_\v=h$ and
\[
\|u_\v\|_{W^{1,1}(\O)}:=\|u_\v\|_{L^1(\O)}+\|\n u_\v\|_{L^1(\O)}\leq(1+\v)\|h\|_{L^1(\O)}.
\] 
\end{prop}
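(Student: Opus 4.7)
The plan is to adapt Giusti's half-space argument to a bounded $C^2$ domain in three stages. First, I would reduce to the model case $\O = \{y_n > 0\}$ via local flattening: using the $C^2$ regularity, cover $\p\O$ by finitely many open sets $U_1, \dots, U_m$ carrying $C^2$ diffeomorphisms $\Phi_i : U_i \to B$ that send $U_i \cap \O$ onto $B \cap \{y_n > 0\}$ and $U_i \cap \p\O$ onto $B \cap \{y_n = 0\}$. By shrinking the $U_i$ I can force $\|D\Phi_i - I_n\|_\infty$ and $|\det D\Phi_i - 1|$ to be as small as I wish, so the pullback of a half-space extension introduces only a multiplicative error $1 + O(\mathrm{diam}\,U_i)$. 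Combined with a subordinate partition of unity $(\theta_i)_i$ of a neighborhood of $\p\O$, the problem reduces to extending each $g_i := (\theta_i h) \circ \Phi_i^{-1} \in L^1(\R^{n-1})$ into the flat half-ball with a $(1+\v/2)$-bound, and summing.

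Second, for smooth compactly supported $g \in C_c^\infty(\R^{n-1})$, fix a nonincreasing $\chi \in C^\infty([0,\infty))$ with $\chi(0) = 1$ and $\chi \equiv 0$ on $[1,\infty)$, and for $\delta>0$ set
\begin{equation}\nonumber
U_{\delta, g}(x', x_n) := \chi(x_n/\delta)\, g(x').
\end{equation}
Then $\tr U_{\delta, g} = g$, and a Fubini computation gives
\begin{equation}\nonumber
\int |\p_n U_{\delta, g}| = \|g\|_{L^1(\R^{n-1})}, \quad \int |\n' U_{\delta, g}| \leq \delta\|\n g\|_{L^1}, \quad \int |U_{\delta, g}| \leq \delta\|\chi\|_\infty\|g\|_{L^1}.
\end{equation}
For smooth $g$ the conclusion is therefore immediate by taking $\delta$ sufficiently small.

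Third, for a general $h \in L^1(\R^{n-1})$, introduce mollifications $h^{(k)} := h * \rho_{\eta_k}$ with $\eta_k \downarrow 0$ chosen fast enough that $\sum_k \|h^{(k+1)} - h^{(k)}\|_{L^1} \leq \tfrac{\v}{4}\|h\|_{L^1}$, and telescope
\begin{equation}\nonumber
h = h^{(0)} + \sum_{k \geq 0}\bigl(h^{(k+1)} - h^{(k)}\bigr).
\end{equation}
Apply the construction of the second stage to each smooth summand $g_k$ with its own parameter $\delta_k$, chosen so that $\delta_k(1 + \|\n g_k\|_{L^1}) \leq 2^{-k-2}\v\|h\|_{L^1}$. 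Summing the extensions produces $u_\v$ whose trace at $x_n = 0$ is $h$. The normal-gradient mass is bounded by $\|h^{(0)}\|_{L^1} + \sum_k \|h^{(k+1)} - h^{(k)}\|_{L^1} \leq (1+\v/4)\|h\|_{L^1}$, while the tangential and $L^1$ contributions together stay below $\v\|h\|_{L^1}/2$ by construction.

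The main obstacle is the third stage: the trace map is not continuous in the $L^1$-norm, so one cannot simply approximate $h$ by smooth functions in $L^1$ and pass to the limit in $W^{1,1}$. The telescoping device circumvents this by inserting a thin corrector layer for each smooth gap $h^{(k+1)} - h^{(k)}$; the critical observation is that the normal $L^1$ mass of each layer equals exactly $\|h^{(k+1)} - h^{(k)}\|_{L^1}$ (because $\chi$ is monotone), so the whole sum telescopes cleanly to roughly $\|h\|_{L^1}$. Transferring back to $\O$ through the $\Phi_i^{-1}$ is then routine, the Jacobian distortion being absorbed into the $(1+\v)$ constant by taking the charts sufficiently small.
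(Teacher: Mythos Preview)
Your approach is correct and shares with the paper the essential telescoping device over smooth approximations of $h$, but the execution differs in two ways. First, you flatten via local charts and a partition of unity, whereas the paper works intrinsically: it uses the distance $d(x)=\dist(x,\partial\Omega)$ and the nearest-point projection $\Pi_{\partial\Omega}$ directly, defining $u_\varepsilon$ on each shell $\{t_{k+1}\le d<t_k\}$ as the affine interpolation $\tfrac{d-t_{k+1}}{t_k-t_{k+1}}\,h_k\circ\Pi_{\partial\Omega}+\tfrac{t_k-d}{t_k-t_{k+1}}\,h_{k+1}\circ\Pi_{\partial\Omega}$ (with $h_0\equiv 0$), and then estimates via the coarea formula in place of Fubini. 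This sidesteps the chart/Jacobian bookkeeping and the care needed to keep mollified data compactly supported inside each chart, at the cost of invoking the $C^1$ regularity of $d$ and the Lipschitz regularity of $\Pi_{\partial\Omega}$ in a tubular neighborhood. Second, your extension is a \emph{sum} of overlapping thin-slab correctors $\sum_k\chi(x_n/\delta_k)g_k$, while the paper builds a \emph{single} function by stacking disjoint interpolation layers; both yield normal-derivative mass $\sum_k\|g_k\|_{L^1}$, but the layered version is locally Lipschitz with no overlap, while yours makes the contribution of each term transparently independent and is arguably closer in spirit to Giusti's original half-space argument.
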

\begin{proof}
We sketch the proof of Proposition \ref{ExtGiustiUpBound}. Let $h\in L^1(\p\O)$ and let $\v>0$ be sufficiently small s.t.
\[
(1+\v^2)^2+\v^2+\v^4<1+\frac{\v}{2}\text{ and }(1+\v^2)\v^2<\frac{\v}{2}.
\].

{\bf Step 1.} We may consider $\eta>0$ sufficiently small s.t. in $\O_\eta:=\{x\in\O\,;\,\dist(x,\p\O)<\eta\}$ we have:
\begin{enumerate}
\item The function 
\[
\begin{array}{cccc}
d:&\O_\eta&\to&(0,\eta)\\&x&\mapsto&\dist(x,\p\O)
\end{array}
\]
is of class $C^1$ and satisfies $|\n d|\geq1/2$,
\item The orthogonal projection on $\p\O$, $\Pi_{\p\O}$, is Lipschitz.
\end{enumerate}

We now fix a sequence $(h_k)_k\subset C^\infty(\p\O)$ s.t. $h_k\stackrel{L^1}{\to}h$. We may assume that (up to replace the first term and to consider an extraction):
\begin{enumerate}
\item $h_0\equiv0$,
\item $\sum_{k\geq0}\|h_{k+1}-h_k\|_{L^1}\leq(1+\v^2)\|h\|_{L^1}$.
\end{enumerate}
And finally we fix a decreasing sequence $(t_k)_k\subset\R^*_+$ s.t.
\begin{enumerate}
\item $t_0<\min(\eta,\v^2)$ is sufficiently small s.t. 
\begin{itemize}
\item $4t_0\max(1;\|\n\Pi_{\p\O}\|_{L^\infty})\times\max(1,\sup_k\|h_k\|_{L^1})<\min(\v^2,\v^2\|h\|_{L^1})$,
\item for $\varphi\in L^1(\p\O)$ we have for $s\in(0,t_0)$
\[
\int_{d^{-1}(\{s\})}|\varphi\circ\Pi_{\p\O}(x)|\leq(1+\v^2)\int_{\p\O}|\varphi(x)|.
\]
\end{itemize}
\item For $k\geq1$ we have $t_k\leq\dfrac{t_0\|h\|_{L^1}}{2^k(1+\|\n h_k\|_{L^\infty}+\|\n h_{k+1}\|_{L^\infty})}$.
\end{enumerate} 
{\bf Step 2.} We define
\[\begin{array}{cccc}
u_\v:&\O&\to&\R
\\
&x&\mapsto&\begin{cases}\dfrac{d(x)-t_{k+1}}{t_k-t_{k+1}}h_k\circ\Pi_{\p\O}(x)+\dfrac{t_{k}-d(x)}{t_k-t_{k+1}}h_{k+1}\circ\Pi_{\p\O}(x)&\text{if }d(x)\in[t_{k+1},t_k)\\0&\text{otherwise} \end{cases}\end{array}.
\]
We may easily check that $u_\v$ is locally Lipschitz and thus weakly differentiable. 

From the coarea formula and a standard change of variable we have
\begin{eqnarray*}
\|u_\v\|_{L^1}&\leq&2\int_{\{d\leq t_0\}}|u_\v||\n d|
\\&\leq&2\int_0^{t_0}{\rm d}s\int_{d^{-1}(\{s\})}|u_\v|{\rm d}x
\\&\leq&2\sum_{k\geq0}\int_{t_{k+1}}^{t_k}{\rm d}s\int_{d^{-1}(\{s\})}|u_\v|{\rm d}x
\\&\leq&2\sum_{k\geq0}\int_{t_{k+1}}^{t_k}{\rm d}s\int_{d^{-1}(\{s\})}[|h_{k}\circ\Pi_{\p\O}(x)|+|h_{k+1}\circ\Pi_{\p\O}(x)|]{\rm d}x
\\&\leq&2(1+\v^2)\sum_{k\geq0}\int_{t_{k+1}}^{t_k}{\rm d}s\int_{\p\O}[|h_{k}(x)|+|h_{k+1}(x)|]{\rm d}x
\\&\leq&2(1+\v^2)\sum_{k\geq0}(t_k-t_{k+1})(\| h_k\|_{L^1}+\| h_{k+1}\|_{L^1})
\\&\leq&4(1+\v^2)t_0\sup_k\|h_k\|_{L^1}
\\&\leq&(1+\v^2)\v^2\|h\|_{L^1}
\\&\leq&\frac{\v}{2}\|h\|_{L^1}.
\end{eqnarray*}

We now estimate $\|\n u_\v\|_{L^1}$. It is easy to check that if $d(x)\in(t_{k+1},t_k)$ then we have
\[
|\n u_\v(x)|\leq|\n d(x)|\left[\dfrac{| h_k\circ\Pi_{\p\O}(x)- h_{k+1}\circ\Pi_{\p\O}(x)|}{t_k-t_{k+1}}+2\|\n\Pi_{\p\O}\|_{L^\infty}\left[|\n h_k|\circ\Pi_{\p\O}(x)+|\n h_{k+1}|\circ\Pi_{\p\O}(x)\right]\right].
\]
Consequently we get
\begin{eqnarray*}
\|\n u_\v\|_{L^1}&\leq&(1+\v^2)\sum_{k\geq0}\left\{\int_{t_{k+1}}^{t_k}\dfrac{\|h_{k+1}-h_k\|_{L^1}}{t_k-t_{k+1}}+2\|\n\Pi_{\p\O}\|_{L^\infty}(t_k-t_{k+1})(\|\n h_{k+1}\|_{L^1}+\| \n h_k\|_{L^1})\right\}
\\&\leq&(1+\v^2)[(1+\v^2)\|h\|_{L^1}+2\|\n\Pi_{\p\O}\|_{L^\infty}t_0\|h\|_{L^1}]
\\&\leq&(1+\v^2)[(1+\v^2)+\v^2]\|h\|_{L^1}
\\&\leq&(1+\v/2)\|h\|_{L^1}.
\end{eqnarray*}
Consequently $u_\v\in W^{1,1}(\O)$ and $\|u_\v\|_{W^{1,1}}\leq(1+\v)\|h\|_{L^1}$.

In order to end the proof it suffices to check that $\tr_{\p\O}(u_\v)=h$. The justification of this property follows the argument of Lemma 2.4 in \cite{giusti1984minimal}. 
\end{proof}

\noindent{\bf Acknowledgements.} 
The author would like to thank Petru Mironescu for fruitful discussions.

\bibliographystyle{amsalpha}
\bibliography{biblioNonExistence}

\providecommand{\bysame}{\leavevmode\hbox to3em{\hrulefill}\thinspace}
\providecommand{\MR}{\relax\ifhmode\unskip\space\fi MR }
\providecommand{\MRhref}[2]{%
  \href{http://www.ams.org/mathscinet-getitem?mr=#1}{#2}
}
\providecommand{\href}[2]{#2}
\begin{thebibliography}{SWZ92}

\bibitem[AG78]{AG78}
G.~Anzellotti and M.~Giaquinta, \emph{Funzioni $bv$ e tracce}, Rendiconti del
  Seminario Matematico della Universit{\`a} di Padova \textbf{60} (1978),
  1--21.

\bibitem[Giu84]{giusti1984minimal}
E.~Giusti, \emph{Minimal surfaces and functions of bounded variation}, no.~80,
  Springer Science \& Business Media, 1984.

\bibitem[ST14]{ST1}
G.~Spradlin and A.~Tamasan, \emph{Not all traces on the circle come from
  functions of least gradient in the disk}, Indiana University Mathematics
  Journal \textbf{63} (2014), no.~3, 1819--1837.

\bibitem[SWZ92]{SWZ1}
P.~Sternberg, G.~Williams, and W.~Ziemer, \emph{Existence, uniqueness, and
  regularity for functions of least gradient.}, Journal f\"ur die reine und
  angewandte Mathematik \textbf{430} (1992), 35--60.

\end{thebibliography}

\end{document}